\def\1{\bm{1}}
\def\rve{{\mathbf{e}}}
\def\rvg{{\mathbf{g}}}
\def\rvh{{\mathbf{h}}}
\def\rvu{{\mathbf{i}}}
\def\rvm{{\mathbf{m}}}
\def\rvs{{\mathbf{s}}}
\def\rvu{{\mathbf{u}}}
\def\rvv{{\mathbf{v}}}
\def\rvx{{\mathbf{x}}}
\def\rvy{{\mathbf{y}}}
\def\rvz{{\mathbf{z}}}
\def\rmA{{\mathbf{A}}}
\def\rmD{{\mathbf{D}}}
\def\rmG{{\mathbf{G}}}
\def\rmH{{\mathbf{H}}}
\def\rmI{{\mathbf{I}}}
\def\rmJ{{\mathbf{J}}}
\def\rmK{{\mathbf{K}}}
\def\rmM{{\mathbf{M}}}
\def\rmP{{\mathbf{P}}}
\def\rmQ{{\mathbf{Q}}}
\def\rmU{{\mathbf{U}}}
\def\rmV{{\mathbf{V}}}
\def\vzero{{\bm{0}}}
\def\vone{{\bm{1}}}
\def\vmu{{\bm{\mu}}}
\def\vpi{{\bm{\pi}}}
\def\vtheta{{\bm{\theta}}}
\def\vdelta{{\bm{\delta}}}
\def\vx{{\bm{x}}}
\def\vz{{\bm{z}}}
\def\mA{{\bm{A}}}
\def\mH{{\bm{H}}}
\def\mI{{\bm{I}}}
\def\mK{{\bm{K}}}
\def\mU{{\bm{U}}}
\def\mV{{\bm{V}}}
\DeclareMathAlphabet{\mathsfit}{\encodingdefault}{\sfdefault}{m}{sl}
\SetMathAlphabet{\mathsfit}{bold}{\encodingdefault}{\sfdefault}{bx}{n}
\def\gC{{\mathcal{C}}}
\def\gE{{\mathcal{E}}}
\def\gF{{\mathcal{F}}}
\def\gG{{\mathcal{G}}}
\def\gK{{\mathcal{K}}}
\def\gN{{\mathcal{N}}}
\def\gO{{\mathcal{O}}}
\def\sP{{\mathbb{P}}}
\def\sR{{\mathbb{R}}}
\newcommand{\E}{\mathbb{E}}
\newcommand{\R}{\mathbb{R}}
\DeclareMathOperator*{\argmin}{arg\,min}
\def\smallint{\begingroup\textstyle \int\endgroup}
\theoremstyle{plain}
\newtheorem{theorem}{Theorem}[section]
\newtheorem{proposition}[theorem]{Proposition}
\newtheorem{lemma}[theorem]{Lemma}
\newtheorem{corollary}[theorem]{Corollary}
\theoremstyle{definition}
\newtheorem{definition}{Definition}[section]
\theoremstyle{remark}
\newtheorem{remark}{Remark}[section]
\title{Accelerating Distributed Stochastic
Optimization via Self-Repellent Random Walks}
\author{Jie Hu$^*$\\
Department of Electrical and Computer Engineering\\
North Carolina State University \\
\texttt{jhu29@ncsu.edu} \\
\And
Vishwaraj Doshi$^*$\\
Data Science \& Advanced Analytics \\
IQVIA Inc. \\
\texttt{vishwaraj.doshi@iqvia.com} \\
\And
Do Young Eun \\
Department of Electrical and Computer Engineering\\
North Carolina State University \\
\texttt{dyeun@ncsu.edu}
}
\begin{document}

\maketitle
\def\thefootnote{*}\footnotetext{Equal contributors.}\def\thefootnote{\arabic{footnote}}
\begin{abstract}
We study a family of distributed stochastic optimization algorithms where gradients are sampled by a token traversing a network of agents in random-walk fashion. Typically, these random-walks are chosen to be Markov chains that asymptotically sample from a desired target distribution, and play a critical role in the convergence of the optimization iterates. In this paper, we take a novel approach by replacing the standard \textit{linear} Markovian token by one which follows a \textit{non-linear} Markov chain - namely the Self-Repellent Radom Walk (SRRW). Defined for any given `base' Markov chain, the SRRW, parameterized by a positive scalar $\alpha$, is less likely to transition to states that were highly visited in the past, thus the name. In the context of MCMC sampling on a graph, a recent breakthrough in \citet{doshi2023self} shows that the SRRW achieves $O(1/\alpha)$ decrease in the asymptotic variance for sampling. We propose the use of a `generalized' version of the SRRW to drive token algorithms for distributed stochastic optimization in the form of stochastic approximation, termed SA-SRRW. We prove that the optimization iterate errors of the resulting SA-SRRW converge to zero almost surely and prove a central limit theorem, deriving the explicit form of the resulting asymptotic covariance matrix corresponding to iterate errors. This asymptotic covariance is always smaller than that of an algorithm driven by the base Markov chain and decreases at rate $O(1/\alpha^2)$ - the performance benefit of using SRRW thereby \textit{amplified} in the stochastic optimization context. Empirical results support our theoretical findings.
\end{abstract}

\section{Introduction} \label{section:introduction}

\vspace{-2mm}
Stochastic optimization algorithms solve optimization problems of the form
\vspace{-1mm}
\begin{equation} \label{eqn:optimization_problem_definition}
\vtheta^* \in \argmin_{\vtheta \in \sR^d} f(\vtheta),~~~\text{where}~ f(\vtheta) \triangleq \E_{X \sim\vmu} \left[F(\vtheta,X)\right] = \sum_{i \in \gN} \mu_i F(\vtheta, i),\vspace{-2mm}
\end{equation}
with the objective function $f:\R^d \to \R$ and $X$ taking values in a finite state space $\gN$ with distribution $\vmu \triangleq [\mu_i]_{i \in \gN}$. Leveraging partial gradient information per iteration, these algorithms have been recognized for their scalability and efficiency with large datasets \citep{bottou2018optimization,even2023stochastic}. For any given \textit{noise sequence} $\{X_n\}_{n \geq 0} \subset \gN$, and step size sequence $\{ \beta_n \}_{n \geq 0} \subset \R_+$, most stochastic optimization algorithms can be classified as stochastic approximations (SA) of the form 
\begin{equation} \label{eqn:SA_single_timescale}
    \vtheta_{n+1} = \vtheta_n + \beta_{n+1}H(\vtheta_n, X_{n+1}), ~~~\forall~ n\geq 0, 
\end{equation}
where, roughly speaking, $H(\vtheta, i)$ contains gradient information $\nabla_{\vtheta} F(\theta, i)$, such that $\vtheta^*$ solves $\rvh(\vtheta) \!\triangleq\! \E_{X \sim \vmu}[H(\vtheta,X)] \!=\! \sum_{i \in \gN} \mu_i H(\vtheta, i) \!=\! \vzero$. Such SA iterations include the well-known stochastic gradient descent (SGD), stochastic heavy ball (SHB) \citep{gadat2018stochastic,li2022revisiting}, and some SGD-type algorithms employing additional auxiliary variables \citep{barakat2021stochastic}.\footnote{Further illustrations of stochastic optimization algorithms of the form \eqref{eqn:SA_single_timescale} are deferred to Appendix \ref{appendix:sgd_variants}.} These algorithms typically have the stochastic noise term $X_n$ generated by \textit{i.i.d.} random variables with probability distribution $\vmu$ in each iteration. In this paper, we study a stochastic optimization algorithm where the noise sequence governing access to the gradient information is generated from general stochastic processes in place of \textit{i.i.d.} random variables. 

This is commonly the case in distributed learning, where $\{X_n\}$ is a (typically Markovian) random walk, and should asymptotically be able to sample the gradients from the desired probability distribution $\vmu$. This is equivalent to saying that the random walker's \textit{empirical distribution} converges to $\vmu$ almost surely (a.s.); that is,
$\rvx_n \triangleq \frac{1}{n+1}\sum_{k = 0}^{n} \vdelta_{X_k} \xrightarrow[n \to \infty]{a.s.} \vmu $
for any initial $X_0 \in \gN$, where $\vdelta_{X_k}$ is the delta measure whose $X_k$'th entry is one, the rest being zero. Such convergence is most commonly achieved by employing the Metropolis Hastings random walk (MHRW) which can be designed to sample from any \textit{target} measure $\vmu$ and implemented in a scalable manner \citep{sun2018markov}. Unsurprisingly, convergence characteristics of the employed Markov chain affect that of the SA sequence \eqref{eqn:SA_single_timescale}, and appear in both finite-time and asymptotic analyses. Finite-time bounds typically involve the second largest eigenvalue in modulus (SLEM) of the Markov chain's transition kernel $\rmP$, which is critically connected to the mixing time of a Markov chain \citep{levin2017markov}; whereas asymptotic results such as central limit theorems (CLT) involve asymptotic covariance matrices that embed information regarding the entire spectrum of $\rmP$, i.e., all eigenvalues as well as eigenvectors \citep{bremaud2013markov}, which are key to understanding the sampling efficiency of a Markov chain. Thus, the choice of random walker can significantly impact the performance of \eqref{eqn:SA_single_timescale}, and simply ensuring that it samples from $\vmu$ asymptotically is not enough to achieve optimal algorithmic performance. In this paper, we take a closer look at the distributed stochastic optimization problem through the lens of a \textit{non-linear} Markov chain, known as the \textit{Self Repellent Random Walk} (SRRW), which was shown in \cite{doshi2023self} to achieve asymptotically minimal sampling variance for large values of $\alpha$, a positive scalar controlling the strength of the random walker's self-repellence behaviour. Our proposed modification of \eqref{eqn:SA_single_timescale} can be implemented within the settings of decentralized learning applications in a scalable manner, while also enjoying significant performance benefit over distributed stochastic optimization algorithms driven by vanilla Markov chains.

\textbf{Token Algorithms for Decentralized Learning.} In decentralized learning, agents like smartphones or IoT devices, each containing a subset of data, collaboratively train models on a graph $\gG(\gN,\gE)$ by sharing information locally without a central server \citep{mcmahan2017communication}. In this setup, $N \!=\! |\gN|$ agents correspond to nodes $i\!\in\!\gN$, and an edge $(i,j) \!\in\! \gE$ indicates direct communication between agents $i$ and $j$. This decentralized approach offers several advantages compared to the traditional centralized learning setting, promoting data privacy and security by eliminating the need for raw data to be aggregated centrally and thus reducing the risk of data breach or misuse \citep{bottou2018optimization,nedic2020distributed}. Additionally, decentralized approaches are more scalable and can handle vast amounts of heterogeneous data from distributed agents without overwhelming a central server, alleviating concerns about single point of failure \citep{vogels2021relaysum}. 

Among decentralized learning approaches, the class of `Token' algorithms can be expressed as stochastic approximation iterations of the type \eqref{eqn:SA_single_timescale}, wherein the sequence $\{ X_n \}$ is realized as the sample path of a token that stochastically traverses the graph $\gG$, carrying with it the iterate $\vtheta_n$ for any time $n\geq 0$ and allowing each visited node (agent) to incrementally update $\vtheta_n$ using locally available gradient information. Token algorithms have gained popularity in recent years \citep{hu2022efficiency,triastcyn2022decentralized,hendrikx2023principled}, and are provably more communication efficient \citep{even2023stochastic} when compared to consensus-based algorithms - another popular approach for solving distributed optimization problems \citep{boyd2006randomized,morral2017success,olshevsky2022asymptotic}. The construction of token algorithms means that they do not suffer from expensive costs of synchronization and communication that are typical of consensus-based approaches, where all agents (or a subset of agents selected by a coordinator \citep{boyd2006randomized,wang2019matcha}) on the graph are required to take simultaneous actions, such as communicating on the graph at each iteration. While decentralized Federated learning has indeed helped mitigate the communication overhead by processing multiple SGD iterations prior to each aggregation \citep{lalitha2018fully,ye2022decentralized,chellapandi2023convergence}, they still cannot overcome challenges such as synchronization and straggler issues. 

\textbf{Self Repellent Random Walk.} As mentioned earlier, sample paths $\{ X_n \}$ of token algorithms are usually generated using Markov chains with $\vmu \in \text{Int}(\Sigma)$ as their limiting distribution. Here, $\Sigma$ denotes the $N$-dimensional probability simplex, with $\text{Int}(\Sigma)$ representing its interior. A recent work by \cite{doshi2023self} pioneers the use of \textit{non-linear Markov chains} to, in some sense, improve upon \textit{any given} time-reversible Markov chain with transition kernel $\rmP$ whose stationary distribution is $\vmu$. They show that the non-linear transition kernel\footnote{Here, non-linearity in the transition kernel implies that $\rmK[\rvx]$ takes probability distribution $\rvx$ as the argument \citep{andrieu2007non}, as opposed to the kernel being a linear operator $\rmK[\rvx] = \rmP$ for a constant stochastic matrix $\rmP$ in a standard (linear) Markovian setting.} $\rmK[\cdot]:\text{Int}(\Sigma) \to [0,1]^{N \times N}$, given by \vspace{-2mm}
\begin{equation}\label{eqn:SRRW_kernel} \vspace{-2mm}
    K_{ij}[\rvx] \triangleq \frac{P_{ij} (x_j/\mu_j)^{-\alpha}}{\sum_{k\in\gN}P_{ik} (x_k/\mu_k)^{-\alpha}}, ~~~~~~\forall~i,j\in\gN,
\end{equation}
for any $\rvx \in \text{Int}(\Sigma)$, when simulated as a self-interacting random walk \citep{del2006self,del2010interacting}, can achieve smaller asymptotic variance than the base Markov chain when sampling over a graph $\gG$, for all $\alpha \!>\! 0$. The argument $\rvx$ for the kernel $\rmK[\vx]$ is taken to be the empirical distribution $\rvx_n$ at each time step $n \!\geq\! 0$. For instance, if node $j$ has been visited more often than other nodes so far, the entry $x_j$ becomes larger (than target value $\mu_j$), resulting in a smaller transition probability from $i$ to $j$ under $\rmK[\rvx]$ in \eqref{eqn:SRRW_kernel} compared to $P_{ij}$. This ensures that a random walker prioritizes more seldom visited nodes in the process, and is thus `self-repellent'. This effect is made more drastic by increasing $\alpha$, and leads to asymptotically near-zero variance at a rate of $O(1/\alpha)$. Moreover, the polynomial function $(x_i/\mu_i)^{-\alpha}$ chosen to encode self-repellent behaviour is shown in \cite{doshi2023self} to be the only one that allows the SRRW to inherit the so-called `scale-invariance' property of the underlying Markov chain -- a necessary component for the scalable implementation of a random walker over a large network without requiring knowledge of any graph-related global constants. Conclusively, such attributes render SRRW especially suitable for distributed optimization.\footnote{Recently, \citet{guo2020escaping} introduce an optimization scheme, which designs self-repellence into the perturbation of the gradient descent iterates \citep{jin2017escape,jin2018accelerated,jin2021nonconvex} with the goal of escaping saddle points. This notion of self-repellence is distinct from the SRRW, which is a probability kernel designed specifically for a token to sample from a target distribution $\vmu$ over a set of nodes on an arbitrary graph.}

\textbf{Effect of Stochastic Noise - Finite time and Asymptotic Approaches.}
Most contemporary token algorithms driven by Markov chains are analyzed using the finite-time bounds approach for obtaining insights into their convergence rates \citep{sun2018markov, doan2019finite, doan2020convergence, triastcyn2022decentralized, hendrikx2023principled}. However, as also explained in \cite{even2023stochastic}, in most cases these bounds are overly dependent on mixing time properties of the specific Markov chain employed therein. This makes them largely ineffective in capturing the exact contribution of the underlying random walk in a manner which is qualitative enough to be used for algorithm design; and performance enhancements are typically achieved via application of techniques such as variance reduction \citep{defazio2014saga,schmidt2017minimizing}, momentum/Nesterov's acceleration \citep{gadat2018stochastic,li2022revisiting}, adaptive step size \citep{Kingma2015,Reddi2018}, which work by modifying the algorithm iterations themselves, and never consider potential improvements to the stochastic input itself. 

Complimentary to finite-time approaches, asymptotic analysis using CLT has proven to be an excellent tool to approach the design of stochastic algorithms \citep{hu2022efficiency,devraj2017zap,morral2017success,chen2020explicit,mou2020linear,devraj2021qlearning}. \cite{hu2022efficiency} shows how asymptotic analysis can be used to compare the performance of SGD algorithms for various stochastic inputs using their notion of efficiency ordering, and, as mentioned in \cite{devraj2017zap}, the asymptotic benefits from minimizing the limiting covariance matrix are known to be a good predictor of finite-time algorithmic performance, also observed empirically in Section \ref{section:simulation}.

From the perspective of both finite-time analysis as well as asymptotic analysis of token algorithms, it is now well established that employing `better' Markov chains can enhance the performance of stochastic optimization algorithm. For instance, Markov chains with smaller SLEMs yield tighter finite-time upper bounds \citep{sun2018markov, ayache2021private, even2023stochastic}. Similarly, Markov chains with smaller asymptotic variance for MCMC sampling tasks also provide better performance, resulting in smaller covariance matrix of SGD algorithms \citep{hu2022efficiency}. Therefore, with these breakthrough results via SRRW achieving near-zero sampling variance, it is within reason to ask: \textit{Can we achieve near-zero variance in distributed stochastic optimization driven by SRRW-like token algorithms on any general graph?}\footnote{This near-zero sampling variance implies a significantly smaller variance than even an \textit{i.i.d.} sampling counterpart, while adhering to graph topological constraints of token algorithms.} In this paper, we answer in the affirmative.

\textbf{SRRW Driven Algorithm and Analysis Approach.}  For any ergodic time-reversible Markov chain with transition probability matrix $\rmP \triangleq [P_{ij}]_{i,j\in\gN}$ and stationary distribution $\vmu \in \text{Int}(\Sigma)$, we consider a general step size version of the SRRW stochastic process analysed in \cite{doshi2023self} and use it to drive the noise sequence in \eqref{eqn:SA_single_timescale}. Our SA-SRRW algorithm is as follows:\vspace{-2mm}
\begin{subequations} \label{eqn:SRRW_SA_iterations}
    \begin{equation} \label{eqn:sampling_step}
        \text{Draw:} ~~~~~~~~
        X_{n+1} \sim \rmK_{X_{n}, \cdot}[\rvx_n]~~~~~~~~~~~~~~~~~~~~~~~~
    \end{equation}\vspace{-5mm}
    \begin{equation} \label{eqn:SRRW_iteration}
        \text{Update:} ~~~~~~  \rvx_{n+1} = \rvx_n +\gamma_{n+1}(\vdelta_{X_{n+1}} - \rvx_n),
    \end{equation}
    \begin{equation} \label{eqn:SA_iteration}
       ~~~~~~~~~~~~~~~~~~~\vtheta_{n+1} = \vtheta_n + \beta_{n+1}H(\vtheta_n, X_{n+1}),
    \end{equation}
\end{subequations}
where $\{\beta_n\}$ and $\{\gamma_n\}$ are step size sequences decreasing to zero, and $\rmK[\rvx]$ is the SRRW kernel in \eqref{eqn:SRRW_kernel}. Current non-asymptotic analyses require globally Lipschitz mean field function \citep{chen2020finite,doan2021finite,zeng2021two,even2023stochastic} and is thus inapplicable to SA-SRRW since the mean field function of the SRRW iterates \eqref{eqn:SRRW_iteration} is only locally Lipschitz (details deferred to Appendix \ref{appendix:non-asymptotic-analysis}). Instead, we successfully obtain non-trivial results by taking an asymptotic CLT-based approach to analyze \eqref{eqn:SRRW_SA_iterations}. This goes beyond just analyzing the asymptotic \textit{sampling covariance}\footnote{Sampling covariance corresponds to only the empirical distribution $\rvx_n$ in \eqref{eqn:SRRW_iteration}.} as in \cite{doshi2023self}, the result therein forming a special case of ours by setting $\gamma_n \!=\! 1/(n\!+\!1)$ and considering only \eqref{eqn:sampling_step} and \eqref{eqn:SRRW_iteration}, that is, in the absence of optimization iteration \eqref{eqn:SA_iteration}. Specifically, we capture the effect of SRRW's hyper-parameter $\alpha$ on the asymptotic speed of convergence of the \textit{optimization} error term $\vtheta_n - \vtheta^*$ to zero via explicit deduction of its asymptotic covariance matrix. See Figure \ref{fig:diagram} for illustration.
\begin{figure}[t]
    \centering
    \includegraphics[width = 0.9\textwidth]{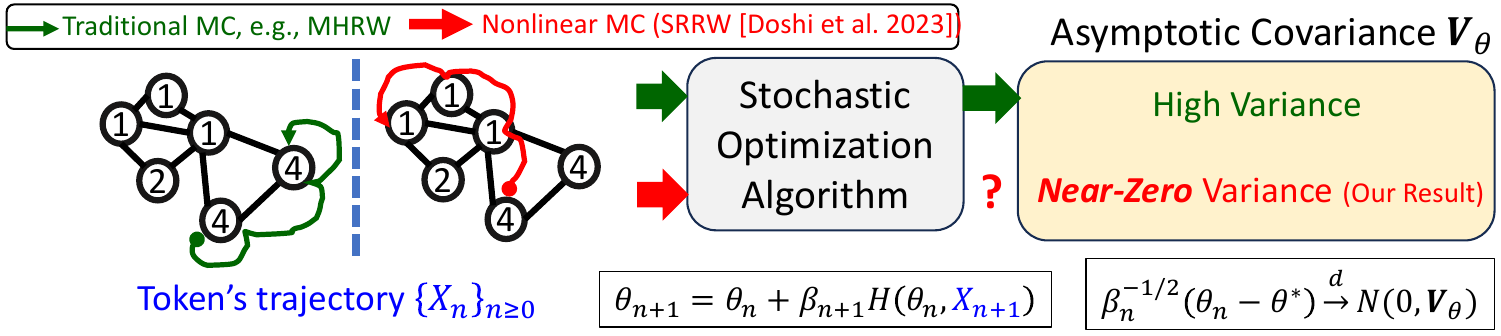}
    \vspace{-3mm}
    \caption{Visualization of token algorithms using SRRW versus traditional MC in distributed learning. Our CLT analysis, extended from SRRW itself to distributed stochastic approximation, leads to near-zero variance for the SA iteration $\vtheta_n$. Node numbers on the left denote visit counts.}
    \label{fig:diagram}
    \vspace{-5mm}
\end{figure}

\textbf{Our Contributions.} 

\textbf{1.} Given any time-reversible `base' Markov chain with transition kernel $\rmP$ and stationary distribution $\vmu$, we generalize first and second order convergence results of $\rvx_n$ to target measure $\vmu$ \citep[Theorems 4.1 and 4.2 in][]{doshi2023self} to a class of weighted empirical measures, through the use of more general step sizes $\gamma_n$. This includes showing that the asymptotic \textit{sampling} covariance terms decrease to zero at rate $O(1/\alpha)$, thus quantifying the effect of self-repellent on $\rvx_n$. Our generalization is not for the sake thereof and is shown in Section \ref{section:asymptotic_analysis_sa_srrw} to be crucial for the design of step sizes $\beta_n, \gamma_n$.

\textbf{2.} Building upon the convergence results for iterates $\rvx_n$, we analyze the algorithm \eqref{eqn:SRRW_SA_iterations} driven by the SRRW kernel in \eqref{eqn:SRRW_kernel} with step sizes $\beta_n$ and $\gamma_n$ separated into three disjoint cases:
\begin{enumerate}[label=(\roman*)]\vspace{-2mm}
    \item $\beta_n = o(\gamma_n)$, and we say that $\vtheta_n$ is on the \textit{slower} timescale compared to $\rvx_n$;\vspace{-1mm}
    \item $\beta_n \!=\! \gamma_n$, and we say that $\vtheta_n$ and $\rvx_n$ are on the same timescale;\vspace{-1mm}
    \item $\gamma_n = o(\beta_n)$, and we say that $\vtheta_n$ is on the \textit{faster} timescale compared to $\rvx_n$. 
    \vspace{-2mm}
\end{enumerate}
For any $\alpha \geq 0$ and let $k = 1, 2$ and $3$ refer to the corresponding cases (i), (ii) and (iii), we show that
\vspace{-1mm}
\begin{equation*}
    \vtheta_n \xrightarrow[n \to \infty]{a.s.} \vtheta^*
    ~~~~\text{and}~~~~
    (\vtheta_n - \vtheta^*)/\sqrt{\beta_n}
    \xrightarrow[n \to \infty]{dist.}
    N\left(\vzero, \rmV^{(k)}_{\vtheta}(\alpha)
    \right),
    \vspace{-1mm}
\end{equation*}
featuring distinct asymptotic covariance matrices $\rmV^{(1)}_{\vtheta}(\alpha), \rmV^{(2)}_{\vtheta}(\alpha)$ and $\rmV^{(3)}_{\vtheta}(\alpha)$, respectively. 
The three matrices coincide when $\alpha = 0$,\footnote{The $\alpha = 0$ case is equivalent to simply running the base Markov chain, since from \eqref{eqn:SRRW_kernel} we have $\rmK[\cdot] = \rmP$, thus bypassing the SRRW's effect and rendering all three cases nearly the same.}.
Moreover, the derivation of the CLT for cases (i) and (iii), for which \eqref{eqn:SRRW_SA_iterations} corresponds to \textit{two-timescale} SA with \textit{controlled} Markov noise, is the first of its kind and thus a key technical contribution in this paper, as expanded upon in Section \ref{section:asymptotic_analysis_sa_srrw}.

\textbf{3.} For case (i), we show that $\rmV^{(1)}_{\vtheta}(\alpha)$ decreases to zero (in the sense of Loewner ordering introduced in Section \ref{section:basic_notations_definitions}) as $\alpha$ increases, with rate $O(1/\alpha^2)$. This is especially surprising, since the asymptotic performance benefit from using the SRRW kernel with $\alpha$ in \eqref{eqn:SRRW_kernel}, to drive the noise terms $X_n$, is \textit{amplified} in the context of distributed learning and estimating $\vtheta^*$; compared to the sampling case, for which the rate is $O(1/\alpha)$ as mentioned earlier. For case (iii), we show that $\rmV_{\vtheta}^{(3)}(\alpha) \!=\! \rmV_{\vtheta}^{(3)}(0)$ for all $\alpha \!\geq\! 0$, implying that using the SRRW in this case provides no asymptotic benefit than the original base Markov chain, and thus performs worse than case (i).
In summary, we deduce that $\rmV_{\vtheta}^{(1)}(\alpha_2) \!<_L\! \rmV_{\vtheta}^{(1)}(\alpha_1) \!<_L\! \rmV_{\vtheta}^{(1)}(0) \!=\! \rmV_{\vtheta}^{(3)}(0) \!=\! \rmV_{\vtheta}^{(3)}(\alpha)$ for all $\alpha_2> \alpha_1 > 0$ and $\alpha>0$.\footnote{In particular, this is the reason why we advocate for a more general step size $\gamma_n = (n+1)^{-a}$ in  the SRRW iterates with $a < 1$, allowing us to choose $\beta_n = (n+1)^{-b}$ with $b \in (a,1]$ to satisfy $\beta_n = o(\gamma_n)$ for case (i).}

\textbf{4.} We numerically simulate our SA-SRRW algorithm on various real-world datasets, focusing on a binary classification task, to evaluate its performance across all three cases. By carefully choosing the function $H$ in SA-SRRW, we test the SGD and algorithms driven by SRRW. Our findings consistently highlight the superiority of case (i) over cases (ii) and (iii) for diverse $\alpha$ values, even in their finite time performance. Notably, our tests validate the variance reduction at a rate of $O(1/\alpha^2)$ for case (i), suggesting it as the best algorithmic choice among the three cases.

\section{Preliminaries and Model Setup}\label{section:preliminaries}

In Section \ref{section:basic_notations_definitions}, we first standardize the notations used throughout the paper, and define key mathematical terms and quantities used in our theoretical analyses. Then, in Section \ref{section:sa_srrw_assumptions_discussions}, we consolidate the model assumptions of our SA-SRRW algorithm \eqref{eqn:SRRW_SA_iterations}. We then go on to discuss our assumptions, and provide additional interpretations of our use of generalized step-sizes.

\subsection{Basic Notations and Definitions}\label{section:basic_notations_definitions}

Vectors are denoted by lower-case bold letters, e.g., $\rvv \triangleq [v_i]\in \R^D$, and matrices by upper-case bold, e.g., $\rmM \triangleq [M_{ij}]\in \R^{D \times D}$. $\rmM^{-T}$ is the transpose of the matrix inverse $\rmM^{-1}$. The diagonal matrix $\rmD_{\rvv}$ is formed by vector $\rvv$ with $v_i$ as the $i$'th diagonal entry. Let $\vone$ and $\vzero$ denote vectors of all ones and zeros, respectively. The identity matrix is represented by $\rmI$, with subscripts indicating dimensions as needed. A matrix is \textit{Hurwitz} if all its eigenvalues possess strictly negative real parts. $\mathds{1}_{\{\cdot\}}$ denotes an indicator function with condition in parentheses. We use $\|\!\cdot\!\|$ to denote both the Euclidean norm of vectors and the spectral norm of matrices. Two symmetric matrices $\rmM_1, \rmM_2$ follow Loewner ordering $\rmM_1 \!<_L\! \rmM_2$ if $\rmM_2 \!-\! \rmM_1$ is positive semi-definite and $\rmM_1 \!\neq\! \rmM_2$. This slightly differs from the conventional definition with $\leq_L$, which allows $\rmM_1 \!=\! \rmM_2$.


Throughout the paper, the matrix $\rmP\triangleq [P_{i,j}]_{i,j\in\gN}$ and vector $\vmu\triangleq [\mu_i]_{i \in \gN}$ are used exclusively to denote an $N\times N$-dimensional transition kernel of an ergodic Markov chain, and its stationary distribution, respectively. Without loss of generality, we assume $P_{ij}>0$ if and only if $a_{ij} > 0$. 
Markov chains satisfying the \textit{detailed balance equation}, where $\mu_iP_{ij} = \mu_jP_{ji}$ for all $i,j\in\gN$, are termed \textit{time-reversible}.
For such chains, we use $(\lambda_i, \rvu_i)$ (resp. $(\lambda_i, \rvv_i)$) to denote the $i$'th left (resp. right) eigenpair where the eigenvalues are ordered: $-1 \!<\! \lambda_1 \!\leq\! \cdots \!\leq\! \lambda_{N-1} \!<\! \lambda_N \!=\! 1$, with $\rvu_N \!=\! \vmu$ and $\rvv_N \!=\! \vone$ in $\sR^N$. We assume eigenvectors to be normalized such that $\rvu_i^T\rvv_i \!=\! 1$ for all $i$, and we have $\rvu_i \!=\! \rmD_{\vmu}\rvv_i$ and $\rvu_i^T \rvv_j \!=\! 0$ for all $i,j\!\in\!\gN$. We direct the reader to \citet[Chapter 3.4]{aldous} for a detailed exposition on spectral properties of time-reversible Markov chains.

\subsection{SA-SRRW: Key Assumptions and Discussions}\label{section:sa_srrw_assumptions_discussions}

\textbf{Assumptions:} All results in our paper are proved under the following assumptions.\vspace{-2mm}
\begin{enumerate}[label=(A\arabic*), ref=A\arabic*]
    \item The function $H: \sR^D \times \gN \to \sR^D$, is a continuous at every $\vtheta \in \R^D$, and there exists a positive constant $L$ such that $\|H(\vtheta,i)\| \leq L(1 + \|\vtheta\|)$ for every $\vtheta \in \sR^D, i \in \gN$. \label{assump:1}
    \item Step sizes $\beta_n$ and $\gamma_n$ follow $\beta_n \!=\! (n\!+\!1)^{-b}$, and $\gamma_n \!=\! (n\!+\!1)^{-a}$, where $a,b \in (0.5,1]$. \label{assump:3} 
    \item Roots of function $\rvh(\cdot)$ are disjoint, which comprise the globally attracting set $\Theta \!\triangleq \! \left\{\vtheta^*| \rvh(\vtheta^*) \!=\! \vzero, \nabla \rvh(\vtheta^*) + \frac{\mathds{1}_{\{b=1\}}}{2}\rmI ~\text{is Hurwitz}\right\} \!\neq\! \emptyset$ of the associated ordinary differential equation (ODE) for iteration \eqref{eqn:SA_iteration}, given by ${d\vtheta(t)}/{dt} \!=\! \rvh(\vtheta(t))$.\label{assump:2}
    \item For any $(\vtheta_0,\rvx_0, X_0) \in \sR^D \times \text{Int}(\Sigma) \times \gN$, the iterate sequence $\{\vtheta_n\}_{n\geq 0}$ (resp. $\{\rvx_n\}_{n\geq 0}$) is $\sP_{\vtheta_0,\rvx_0, X_0}$-almost surely contained within a compact subset of $\sR^D$ (resp. $\text{Int}(\Sigma)$). \label{assump:4} \vspace{-2mm}
\end{enumerate}
\textbf{Discussions on Assumptions:} Assumption \ref{assump:1} requires $H$ to only be locally Lipschitz albeit with linear growth, and is less stringent than the globally Lipschitz assumption prevalent in optimization literature \citep{li2022state,hendrikx2023principled,even2023stochastic}. 

Assumption \ref{assump:3} is the general umbrella assumption under which cases (i), (ii) and (iii) mentioned in Section \ref{section:introduction} are extracted by setting: (i) $a<b$, (ii) $a=b$, and (iii) $a>b$. Cases (i) and (iii) render $\vtheta_n, \rvx_n$ on different timescales; the polynomial form of $\beta_n,\gamma_n$ widely assumed in the two-timescale SA literature \citep{mokkadem2006convergence,zeng2021two,hong2023two}. Case (ii) characterizes the SA-SRRW algorithm \eqref{eqn:SRRW_SA_iterations} as a single-timescale SA with polynomially decreasing step size, and is among the most common assumptions in the SA literature \citep{borkar2009stochastic,fort2015central,li2023online}. In all three cases, the form of $\gamma_n$ ensures $\gamma_n \leq 1$ such that the SRRW iterates $\rvx_n$ in \eqref{eqn:SRRW_iteration} is within $\text{Int}(\Sigma)$, ensuring that $\rmK[\rvx_n]$ is well-defined for all $n\geq 0$. 

In Assumption \ref{assump:2}, limiting dynamics of SA iterations $\{\vtheta_n\}_{n\geq 0}$ closely follow trajectories $\{ \vtheta(t) \}_{t \geq 0}$ of their associated ODE, and assuming the existence of globally stable equilibria is standard \citep{borkar2009stochastic,fort2015central,li2023online}. In optimization problems, this is equivalent to assuming the existence of at most countably many local minima.

Assumption \ref{assump:4} assumes almost sure boundedness of iterates $\vtheta_n$ and $\rvx_n$, which is a common assumption in SA algorithms \citep{kushner2003stochastic,chen2006stochastic,borkar2009stochastic,karmakar2018two,li2023online} for the stability of the SA iterations by ensuring the well-definiteness of all quantities involved. Stability of the weighted empirical measure $\rvx_n$ of the SRRW process is practically ensured by studying \eqref{eqn:SRRW_iteration} with a truncation-based procedure \citep[see][Remark 4.5 and Appendix E for a comprehensive explanation]{doshi2023self}, while that for $\vtheta_n$ is usually ensured either as a by-product of the algorithm design, or via mechanisms such as projections onto a compact subset of $\R^D$, depending on the application context.

We now provide additional discussions regarding the step-size assumptions and their implications on the SRRW iteration \eqref{eqn:SRRW_iteration}.

\textbf{SRRW with General Step Size:} 
As shown in \citet[Remark 1.1]{benaim2015stochastic}, albeit for a completely different non-linear Markov kernel driving the algorithm therein, iterates $\rvx_n$ of \eqref{eqn:SRRW_iteration} can also be expressed as \textit{weighted} empirical measures of $\{X_n\}_{n \geq 0}$, in the following form:
\vspace{-2mm}
\begin{equation}\label{eqn:weighted_empirical_measure}\vspace{-1mm}
    \rvx_n = \frac{\sum_{i=1}^{n}\omega_i \vdelta_{X_i} + \omega_0 \rvx_0}{\sum_{i=0}^n \omega_i}, ~~~\text{where}~~\omega_0 = 1,~~\text{and}~~\omega_n = \frac{\gamma_{n}}{\prod_{i=1}^{n}(1-\gamma_i)},
\end{equation}
for all $n>0$. For the special case when $\gamma_n = 1/(n+1)$ as in \citet{doshi2023self}, we have $\omega_n = 1$ for all $n\geq 0$ and $\rvx_n$ is the typical, unweighted empirical measure. For the additional case considered in our paper, when $a<1$ for $\gamma_n$ as in assumption \ref{assump:3}, we can approximate $1-\gamma_n \approx e^{-\gamma_n}$ and $\omega_n \approx n^{-a}e^{n^{(1-a)}/{(1-a)}}$. This implies that $\omega_n$ will increase at sub-exponential rate, giving more weight to recent visit counts and allowing it to quickly `forget' the poor initial measure $\rvx_0$ and shed the correlation with the initial choice of $X_0$. This `speed up' effect by setting $a<1$ is guaranteed in case (i) irrespective of the choice of $b$ in Assumption \ref{assump:3}, and in Section \ref{section:asymptotic_analysis_sa_srrw} we show how this can lead to further reduction in covariance of optimization error $\vtheta_n=\vtheta^*$ in the asymptotic regime.

\textbf{Additional assumption for case (iii):} Before moving on to Section \ref{section:asymptotic_analysis_sa_srrw}, we take another look at the case when $\gamma_n = o(\beta_n)$, and replace \ref{assump:2} with the following, stronger assumption only for case (iii).
\begin{enumerate}[label=(A\arabic*$'$), ref=A\arabic*$'$]
\setcounter{enumi}{2}
\item For any $\rvx \!\in\! \text{Int}(\Sigma)$, there exists a function $\rho: \text{Int}(\Sigma) \!\to\! \sR^D$ such that $\|\rho(\rvx)\| \!\leq \! L_2(1+\|\rvx\|)$ for some $L_2 \!>\! 0$, $\E_{i \sim \vpi[\rvx]}[H(\rho(\rvx),i)] \!=\! 0$ and $\E_{i \sim \vpi[\rvx]}[\nabla H(\rho(\rvx),i)] + \frac{\mathds{1}_{\{b=1\}}}{2}\rmI$ is Hurwitz. \label{assump:5}
\end{enumerate}\vspace{-3mm}
While Assumption \ref{assump:5} for case (iii) is much stronger than \ref{assump:2}, it is not detrimental to the overall results of our paper, since case (i) is of far greater interest as impressed upon in Section \ref{section:introduction}. This is discussed further in Appendix \ref{appendix:assumption5}.

\section{Asymptotic Analysis of the SA-SRRW Algorithm}\label{section:asymptotic_analysis_sa_srrw}

In this section, we provide the main results for the SA-SRRW algorithm \eqref{eqn:SRRW_SA_iterations}. We first present the a.s. convergence and the CLT result for SRRW with generalized step size, extending the results in \citet{doshi2023self}. Building upon this, we present the a.s. convergence and the CLT result for the SA iterate $\vtheta_n$ under different settings of step sizes. We then shift our focus to the analysis of the different asymptotic covariance matrices emerging out of the CLT result, and capture the effect of $\alpha$ and the step sizes, particularly in cases (i) and (iii), on $\vtheta_n-\vtheta^*$ via performance ordering. 

\textbf{Almost Sure convergence and CLT:} The following result establishes first and second order convergence of the sequence $\{\rvx_n\}_{n \geq 0}$, which represents the weighted empirical measures of the SRRW process $\{X_n\}_{n \geq 0}$, based on the update rule in \eqref{eqn:SRRW_iteration}.

\begin{lemma}\label{lemma:a.s.convergence_SRRW}
Under Assumptions \ref{assump:1}, \ref{assump:3} and \ref{assump:4}, for the SRRW iterates \eqref{eqn:SRRW_iteration}, we have
\vspace{-2mm}
\begin{equation*}
        \rvx_n \xrightarrow[n \to \infty]{a.s.} \vmu, \quad \text{and} \quad \gamma_n^{-1/2}(\rvx_n - \vmu) \xrightarrow[n \to \infty]{dist.} N(\vzero, \rmV_{\rvx}(\alpha)),
\end{equation*}    
\vspace{-4mm}
\begin{equation}\label{eqn:V_x_alpha}
     \text{where}~~~~\rmV_{\rvx}(\alpha) = \sum_{i=1}^{N-1} \frac{1}{2\alpha(1+\lambda_i) + 2 - \mathds{1}_{\{a = 1\}}} \cdot \frac{1+\lambda_i}{1-\lambda_i} \rvu_i\rvu_i^T.   
\end{equation}\vspace{-2mm}
Moreover, for all $\alpha_2>\alpha_1>0$, we have $\rmV_\rvx(\alpha_2) <_L \rmV_\rvx(\alpha_1) <_L \rmV_\rvx(0)$.
\end{lemma}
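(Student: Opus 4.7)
I would view \eqref{eqn:SRRW_iteration} as a stochastic approximation with controlled Markov noise by writing
\begin{equation*}
\rvx_{n+1} = \rvx_n + \gamma_{n+1}\bigl(\vpi[\rvx_n]-\rvx_n\bigr) + \gamma_{n+1}\bigl(\vdelta_{X_{n+1}} - \vpi[\rvx_n]\bigr),
\end{equation*}
where $\vpi[\rvx]$ is the unique stationary distribution of $\rmK[\rvx]$. Since $\rmK[\vmu]=\rmP$, we have $\vpi[\vmu]=\vmu$, so $\vmu$ is an equilibrium of the mean-field ODE $\dot\rvx = \vpi[\rvx]-\rvx$; its global asymptotic stability on $\text{Int}(\Sigma)$ follows from the same KL-type Lyapunov function on the simplex used by \citet{doshi2023self} for the case $a=1$, which depends only on the mean field and not on the step size. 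Combining this with Assumption~\ref{assump:4} and the fact that $\gamma_n = (n+1)^{-a}$ with $a\in(0.5,1]$ satisfies the Robbins-Monro conditions, standard SA results with controlled Markov noise (Borkar 2009 Ch.~6; Kushner-Yin 2003) yield $\rvx_n \to \vmu$ almost surely.

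For the CLT I would linearize about $\vmu$. Let $\rmJ(\alpha) \triangleq \nabla_\rvx(\vpi[\rvx]-\rvx)|_{\rvx=\vmu}$; the calculation in \citet{doshi2023self}, which depends only on the mean field, shows that $\rmJ(\alpha)$ restricted to the tangent space of $\Sigma$ at $\vmu$ has left eigenvectors $\rvu_i$ with eigenvalues $-(\alpha(1+\lambda_i)+1)$ for $i=1,\ldots,N-1$. The martingale-difference noise $\vdelta_{X_{n+1}}-\vpi[\rvx_n]$ has asymptotic covariance, obtained via the Poisson equation for the frozen kernel $\rmK[\vmu]=\rmP$, equal to $\rmQ \triangleq \sum_{i=1}^{N-1}\tfrac{1+\lambda_i}{1-\lambda_i}\rvu_i\rvu_i^T$, i.e., the asymptotic sampling covariance of the base chain. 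Invoking a CLT for SA with controlled Markov noise and polynomial step sizes (Delyon; Fort 2015; Mokkadem-Pelletier 2006), whose hypotheses are met because every eigenvalue of $\rmJ(\alpha)+\tfrac{\mathds{1}_{\{a=1\}}}{2}\rmI$ has real part $\leq -\tfrac12 < 0$, yields $\gamma_n^{-1/2}(\rvx_n-\vmu)\xrightarrow[n\to\infty]{dist.} N(\vzero,\rmV_\rvx(\alpha))$, with $\rmV_\rvx(\alpha)$ the unique symmetric solution of the Lyapunov equation
\begin{equation*}
\bigl(\rmJ(\alpha)+\tfrac{\mathds{1}_{\{a=1\}}}{2}\rmI\bigr)\rmV_\rvx(\alpha) + \rmV_\rvx(\alpha)\bigl(\rmJ(\alpha)+\tfrac{\mathds{1}_{\{a=1\}}}{2}\rmI\bigr)^T + \rmQ = \vzero.
\end{equation*}
Decomposing both sides along the biorthogonal eigenbasis $\{(\rvu_i,\rvv_i)\}_{i<N}$ decouples the equation mode by mode and reduces it to the scalar identity $(2\alpha(1+\lambda_i)+2-\mathds{1}_{\{a=1\}})\,(\text{coefficient of }\rvu_i\rvu_i^T) = \tfrac{1+\lambda_i}{1-\lambda_i}$, giving the closed form \eqref{eqn:V_x_alpha}.

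The Loewner ordering then falls out of the formula: each coefficient $c_i(\alpha)\triangleq\frac{1+\lambda_i}{(1-\lambda_i)(2\alpha(1+\lambda_i)+2-\mathds{1}_{\{a=1\}})}$ is strictly positive and strictly decreasing in $\alpha\geq 0$ (since $1+\lambda_i>0$ for an ergodic reversible $\rmP$), and each $\rvu_i\rvu_i^T$ is positive semidefinite, so $\rmV_\rvx(\alpha_1) - \rmV_\rvx(\alpha_2) = \sum_{i<N}(c_i(\alpha_1)-c_i(\alpha_2))\rvu_i\rvu_i^T$ is positive semidefinite and nonzero whenever $\alpha_2>\alpha_1\geq 0$, which gives both $\rmV_\rvx(\alpha_2)<_L\rmV_\rvx(\alpha_1)$ and $\rmV_\rvx(\alpha_1)<_L\rmV_\rvx(0)$. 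The main obstacle I expect is the CLT step under the generalized step size $a<1$: although the Jacobian calculation and the Poisson-equation noise analysis transfer almost verbatim from \citet{doshi2023self}, one must carefully verify Lipschitz regularity in $\rvx$ of the solution to the Poisson equation for $\rmK[\rvx]$, tightness of the rescaled iterates, and correctly track the $\mathds{1}_{\{a=1\}}$ correction that distinguishes the critical ($a=1$) from the noncritical ($a<1$) Lyapunov equation.
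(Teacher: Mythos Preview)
Your proposal is correct and follows essentially the same approach as the paper: the paper likewise decomposes the iterate via the mean field $\vpi[\rvx]-\rvx$, verifies the Delyon/Fort-type conditions (including the Poisson-equation regularity you flag) to obtain a.s.\ convergence and a CLT with the Lyapunov equation $(\rmJ_{22}(\alpha)+\tfrac{\mathds{1}_{\{a=1\}}}{2}\rmI)\rmV_{\rvx}+\rmV_{\rvx}(\cdots)^T+\rmU_{22}=0$, and then solves it in the eigenbasis $\{\rvu_i\}$ to get \eqref{eqn:V_x_alpha}; the only organizational difference is that the paper embeds $\rvx_n$ in the augmented pair $(\vtheta_n,\rvx_n)$ and reads off the bottom-right block, whereas you treat the SRRW recursion standalone, which is entirely legitimate since $\rvx_n$ does not depend on $\vtheta_n$. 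One small slip: the $\rvu_i$ are \emph{right} eigenvectors of $\rmJ_{22}(\alpha)=2\alpha\vmu\vone^T-\alpha\rmP^T-(\alpha+1)\rmI$ (since $\rmP^T\rvu_i=\lambda_i\rvu_i$ and $\vone^T\rvu_i=0$), not left, but this does not affect the computation.
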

Lemma \ref{lemma:a.s.convergence_SRRW} shows that the SRRW iterates $\rvx_n$ converges to the target distribution $\vmu$ a.s. even under the general step size $\gamma_n=(n+1)^{-a}$ for $a \in (0.5,1]$. We also observe that the asymptotic covariance matrix $\rmV_{\rvx}(\alpha)$ decreases at rate $O(1/\alpha)$. Lemma \ref{lemma:a.s.convergence_SRRW} aligns with \citet[Theorem 4.2 and Corollary 4.3]{doshi2023self} for the special case of $a = 1$, and is therefore more general. Critically, it helps us establish our next result regarding the first-order convergence for the optimization iterate sequence $\{\vtheta_n\}_{n\geq0}$ following update rule \eqref{eqn:SA_iteration}, as well as its second-order convergence result, which follows shortly after. The proofs of Lemma \ref{lemma:a.s.convergence_SRRW} and our next result, Theorem \ref{lemma:a.s.convergence_SA}, are deferred to Appendix \ref{appendix:proof_a.s.convergence}. In what follows, $k = 1, 2$, and $3$ refer to cases (i), (ii), and (iii) in Section \ref{section:sa_srrw_assumptions_discussions}, respectively. All subsequent results are proven under Assumptions \ref{assump:1} to \ref{assump:4}, with \ref{assump:5} replacing \ref{assump:2} only when the step sizes $\beta_n,\gamma_n$ satisfy case (iii). 

\begin{theorem}\label{lemma:a.s.convergence_SA}
For $k\in\{1,2,3\}$, and any initial $(\vtheta_0,\rvx_0,X_0) \in \R^D\times \text{Int}(\Sigma) \times \gN$, we have $\vtheta_n \to \vtheta^*$ as $n\to \infty$ for some $\vtheta^* \in \Theta$, $\sP_{\vtheta_0,\rvx_0,X_0}$-almost surely.
\end{theorem}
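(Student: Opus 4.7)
The plan is to invoke stochastic approximation theory with controlled Markov noise, treating the three step-size regimes through different timescale arguments, all anchored in Lemma \ref{lemma:a.s.convergence_SRRW}. A crucial preliminary observation is that the $\rvx_n$-iteration \eqref{eqn:SRRW_iteration} is autonomous of $\vtheta_n$, since the SRRW kernel $\rmK[\rvx_n]$ in \eqref{eqn:SRRW_kernel} depends only on $\rvx_n$. Hence Lemma \ref{lemma:a.s.convergence_SRRW} already gives $\rvx_n \to \vmu$ almost surely in all three cases, and the noise sequence $\{X_n\}$ is asymptotically driven by a Markov chain with kernel $\rmP$ and stationary distribution $\vmu$, since $\rmK[\vmu] = \rmP$ by direct substitution in \eqref{eqn:SRRW_kernel}.

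For cases (i) and (ii), where $\beta_n = O(\gamma_n)$, I would treat \eqref{eqn:SA_iteration} as an SA with slowly-frozen Markov noise. Decomposing $H(\vtheta_n,X_{n+1}) = \rvh(\vtheta_n) + \xi_{n+1}$ with $\xi_{n+1} := H(\vtheta_n,X_{n+1}) - \rvh(\vtheta_n)$, I would further split $\xi_{n+1}$ into a martingale-difference part plus a vanishing remainder via the Poisson equation associated with the \emph{current} kernel $\rmK[\rvx_n]$, following the methodology of Benveniste--M\'etivier--Priouret (cf.\ Borkar). Linear growth of $H$ from (A\ref{assump:1}), almost-sure compactness of the trajectory from (A\ref{assump:4}), and continuity of $\rmK[\cdot]$ on $\text{Int}(\Sigma)$ give uniform bounds on the Poisson solution along the trajectory; an additional $O(\gamma_n)$ kernel-perturbation term arises from $\rmK[\rvx_{n+1}] - \rmK[\rvx_n]$ and is summable (or absorbed in the step size). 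A Kushner--Clark-type argument with (A\ref{assump:2}) then shows $\vtheta_n$ tracks the ODE $\dot\vtheta = \rvh(\vtheta)$ and converges to its stable equilibrium set $\Theta$; the disjointness of roots combined with the Hurwitz condition pin down a single limit $\vtheta^* \in \Theta$.

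For case (iii), where $\gamma_n = o(\beta_n)$ and $\vtheta_n$ evolves on the faster timescale, I would appeal to the two-timescale SA framework with controlled Markov noise (e.g., Karmakar--Bhatnagar). From the fast viewpoint $\rvx_n$ is quasi-static: the noise kernel $\rmK[\rvx]$ has stationary distribution $\vpi[\rvx]$, and Assumption (A\ref{assump:5}) guarantees that the frozen ODE $\dot\vtheta = \E_{i \sim \vpi[\rvx]}[H(\vtheta,i)]$ has the globally asymptotically stable equilibrium $\rho(\rvx)$. Standard tracking lemmas then yield $\vtheta_n - \rho(\rvx_n) \to 0$ almost surely. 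On the slow timescale, Lemma \ref{lemma:a.s.convergence_SRRW} gives $\rvx_n \to \vmu$; combined with $\rmK[\vmu] = \rmP$, $\vpi[\vmu] = \vmu$, and the defining property of $\rho$, this forces $\rho(\vmu) \in \Theta$. Setting $\vtheta^* := \rho(\vmu)$ completes the argument.

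The main obstacle is the non-linear, state-dependent character of $\rmK[\rvx_n]$: unlike classical Markov-noise SA, the driving kernel itself varies with the random, evolving $\rvx_n$. The technical crux is to establish uniform-in-$\rvx$ bounds and Lipschitz continuity of the Poisson solution of $\rmK[\rvx]$ over the compact subset of $\text{Int}(\Sigma)$ in which $\rvx_n$ lives by (A\ref{assump:4}), together with a quantitative $O(\gamma_n)$ bound on the per-step kernel increment. Once these ingredients are in place, all error terms introduced by the time-varying kernel become asymptotically negligible, and the standard ODE/martingale machinery closes the proof in each of the three cases.
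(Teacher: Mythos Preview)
Your proposal is correct and reaches the same conclusion, but the organization and the external results you invoke differ from the paper's.

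The paper splits the three cases differently: case (ii) is handled by \emph{augmenting} $(\vtheta_n,\rvx_n)$ into a single iterate $\rvz_n$ and verifying the conditions of Delyon's single-timescale SA theorem (conditions C1--C4, including an explicit Poisson-equation solution for the augmented kernel), so that $\rvx_n\to\vmu$ and $\vtheta_n\to\Theta$ fall out simultaneously; cases (i) and (iii) are both handled by the two-timescale SA framework of Yaji--Bhatnagar, verifying their assumptions (B1)--(B5) and noting that the $\rvx$-dynamics are independent of $\vtheta$ so $\rho(\vtheta)\equiv\vmu$ in case (i). You instead group cases (i) and (ii) together, first invoking Lemma \ref{lemma:a.s.convergence_SRRW} to get $\rvx_n\to\vmu$ and then running a direct Poisson-equation/Kushner--Clark argument on the $\vtheta$-iteration alone, with the time-varying kernel $\rmK[\rvx_n]$ treated as a slowly varying perturbation; case (iii) you handle as the paper does, via two-timescale theory (citing Karmakar--Bhatnagar rather than Yaji--Bhatnagar). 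Your route exploits more aggressively the autonomy of the $\rvx$-recursion and is arguably more elementary for cases (i)--(ii), since it avoids the augmented state space and the verification of an off-the-shelf theorem in $(D+N)$ dimensions; the paper's route has the advantage of being a pure conditions-check against existing black-box results, with no bespoke noise decomposition needed. One small point: in your decomposition $\xi_{n+1}=H(\vtheta_n,X_{n+1})-\rvh(\vtheta_n)$ you should make explicit the intermediate term $\E_{i\sim\vpi[\rvx_n]}[H(\vtheta_n,i)]-\rvh(\vtheta_n)$, since the Poisson equation for $\rmK[\rvx_n]$ centers at $\vpi[\rvx_n]$, not $\vmu$; this extra bias term vanishes by $\rvx_n\to\vmu$ and continuity of $\vpi[\cdot]$, but it deserves a line.
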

In the stochastic optimization context, the above result ensures convergence of iterates $\vtheta_n$ to a local minimizer $\vtheta^*$. Loosely speaking, the first-order convergence of $\rvx_n$ in Lemma \ref{lemma:a.s.convergence_SRRW} as well as that of $\vtheta_n$ are closely related to the convergence of trajectories $\{\rvz(t)\triangleq (\vtheta(t),\rvx(t))\}_{t\geq0}$ of the (coupled) mean-field ODE, written in a matrix-vector form as
\begin{equation}\label{eqn:srrw_sa_coupled_ode}
    \textstyle
    \frac{d}{dt}
    \rvz(t)
    =
    \rvg(\rvz(t)) \triangleq 
    \begin{bmatrix}
        \rmH(\vtheta(t))^T\vpi[\rvx(t)] \\
        \vpi[\rvx(t)]-\rvx(t)
    \end{bmatrix} \in \sR^{D+N}.
\end{equation}
where matrix $\rmH(\vtheta) \!\triangleq \![H(\vtheta,1),\!\cdots\!,H(\vtheta,N)]^T\!\in\! \sR^{N\!\times\! D}$ for any $\vtheta \in \R^D$. Here, $\vpi[\rvx]\in\text{Int}(\Sigma)$ is the stationary distribution of the SRRW kernel $\rmK[\rvx]$ and is shown in \citet{doshi2023self} to be given by $\pi_i[\rvx] \propto \sum_{j\in\gN}\mu_i P_{ij} (x_i/\mu_i)^{-\alpha}(x_j/\mu_j)^{-\alpha}$. The Jacobian matrix of \eqref{eqn:srrw_sa_coupled_ode} when evaluated at equilibria $\rvz^*=(\vtheta^*,\vmu)$ for $\vtheta^* \in \Theta$ captures the behaviour of solutions of the mean-field in their vicinity, and plays an important role in the asymptotic covariance matrices arising out of our CLT results. We evaluate this Jacobian matrix $\rmJ(\alpha)$ as a function of $\alpha \geq 0$ to be given by
\begin{equation}\label{eqn:Jacobian}
    \rmJ(\alpha) \!\triangleq \!\nabla g(\rvz^*)
    \!= \!
    \begin{bmatrix}
       \nabla \rvh(\vtheta^*)   & -\alpha \rmH(\vtheta^*)^T(\rmP^T\!\! + \rmI) \\
        \vzero_{N \!\times\! D} & 2\alpha \boldsymbol{\mu}\vone^T \!\!\!-\! \alpha \rmP^T \!\!\!-\! (\alpha\!+\!1)\rmI
    \end{bmatrix}
    \!\triangleq\!
    \begin{bmatrix}
        \rmJ_{11} & \rmJ_{12}(\alpha) \\
        \rmJ_{21} & \rmJ_{22}(\alpha)
    \end{bmatrix}.
\end{equation}
The derivation of $\rmJ(\alpha)$ is referred to Appendix \ref{appendix:proof_CLT_case1}.\footnote{The Jacobian $\rmJ(\alpha)$  is $(D\!+\!N)\!\times\!(D\!+\!N)$-- dimensional, with $\rmJ_{11} \!\in\! \sR^{D\!\times\! D}$ and $\rmJ_{22}(\alpha)\!\in\!\sR^{N\!\times\!N}$. Following this, all matrices written in a block form, such as matrix $\rmU$ in \eqref{eqn:form_of_U}, will inherit the same dimensional structure.} Here, $\rmJ_{21}$ is a zero matrix since $\vpi[\rvx]-\rvx$ is devoid of $\vtheta$. While matrix $\rmJ_{22}(\alpha)$ is exactly of the form in \citet[Lemma 3.4]{doshi2023self} to characterize the SRRW performance, our analysis includes an additional matrix $\rmJ_{12}(\alpha)$, which captures the effect of $\rvx(t)$ on $\vtheta(t)$ in the ODE \eqref{eqn:srrw_sa_coupled_ode}, which translates to the influence of our generalized SRRW empirical measure $\rvx_n$ on the SA iterates $\vtheta_n$ in \eqref{eqn:SRRW_SA_iterations}. 

For notational simplicity, and without loss of generality, all our remaining results are stated while conditioning on the event that $\{\vtheta_n \!\to \! \vtheta^*\}$, for some $\vtheta^*\!\in\!\Theta$. We also adopt the shorthand notation $\rmH$ to represent $\rmH(\vtheta^*)$. Our main CLT result is as follows, with its proof deferred to Appendix \ref{appendix:proof_CLT_result}.

\begin{theorem}\label{theorem:CLT_SA} For any $\alpha \geq 0$, we have: (a) There exists $\rmV^{(k)}(\alpha)$ for all $k \in \{1,2, 3\}$ such that
    \begin{equation*}\label{eqn:CLT_SA_SRRW_form}
        \begin{bmatrix}
            \beta_n^{-1/2}(\vtheta_n - \vtheta^*) \\ 
            \gamma_n^{-1/2}(\rvx_n - \vmu)
        \end{bmatrix}
        \xrightarrow[n \to \infty]{\text{dist.}}
        N\left(\vzero, \rmV^{(k)}(\alpha)
        \right).
    \end{equation*}
(b) For $k=2$, matrix $\rmV^{(2)}(\alpha)$ solves the Lyapunov equation $\rmJ(\alpha)\rmV^{(2)}(\alpha) + \rmV^{(2)}(\alpha)\rmJ(\alpha)^T + \mathds{1}_{\{b=1\}}\rmV^{(2)}(\alpha) = - \rmU$, where the Jacobian matrix $\rmJ(\alpha)$ is in \eqref{eqn:Jacobian}, and 
\begin{equation}\label{eqn:form_of_U}
    \rmU \triangleq \sum_{i=1}^{N-1} \frac{1+\lambda_i}{1-\lambda_i} \cdot
    \begin{bmatrix}
    \rmH^T \rvu_i\rvu_i^T \rmH&  \rmH^T \rvu_i\rvu_i^T    \\  \rvu_i\rvu_i^T \rmH&   \rvu_i\rvu_i^T   
    \end{bmatrix}
    \triangleq
        \begin{bmatrix}
            \rmU_{11} & \rmU_{12} \\
            \rmU_{21} & \rmU_{22}
        \end{bmatrix}.
\end{equation}
(c) For $k\in\{1,3\}$, $\rmV^{(k)}(\alpha)$ becomes block diagonal, which is given by
\begin{equation}\label{eqn:v_alpha}
    \rmV^{(k)}(\alpha) = 
        \begin{bmatrix}
            \rmV^{(k)}_{\vtheta}(\alpha) & \vzero_{D\!\times\!N} \\ 
            \vzero_{N\!\times\!D} & \rmV_{\rvx}(\alpha)
        \end{bmatrix},
\end{equation}
where $\rmV_{\rvx}(\alpha)$ is as in \eqref{eqn:V_x_alpha}, and $\rmV^{(1)}_{\vtheta}(\alpha)$ and $\rmV^{(3)}_{\vtheta}(\alpha)$ can be written in the following explicit form:
\begin{align}
    &\rmV^{(1)}_{\vtheta}(\alpha) = \smallint_0^{\infty} e^{t(\nabla_{\vtheta} \rvh(\vtheta^*)+\frac{\mathds{1}_{\{b=1\}}}{2}\mI)} \rmU_{\vtheta}(\alpha) e^{t(\nabla_{\vtheta} \rvh(\vtheta^*)+\frac{\mathds{1}_{\{b=1\}}}{2}\mI)^T} dt, \notag\\
    &\rmV^{(3)}_{\vtheta}(\alpha) = \smallint_0^{\infty} e^{t\nabla_{\vtheta} \rvh(\vtheta^*)} \rmU_{11} e^{t\nabla_{\vtheta} \rvh(\vtheta^*)} dt,  \notag\\
    \text{where} ~~&\rmU_{\vtheta}(\alpha) = \sum_{i=1}^{N-1} \frac{1}{(\alpha(1+\lambda_i)+1)^2}\cdot\frac{1+\lambda_i}{1-\lambda_i} \rmH^T \rvu_i \rvu_i^T \rmH. \label{eqn:matrix_U_vtheta}
\end{align}
\end{theorem}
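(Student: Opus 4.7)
The plan is to recast \eqref{eqn:SRRW_SA_iterations} as a single stochastic approximation with controlled Markov noise for the joint iterate $\rvz_n \triangleq (\vtheta_n, \rvx_n)$, so that all three step-size regimes fit into a unified Robbins--Monro framework with mean field $\rvg$ from \eqref{eqn:srrw_sa_coupled_ode} and Jacobian $\rmJ(\alpha)$ from \eqref{eqn:Jacobian}. A crucial observation is that at the equilibrium $\rvz^* = (\vtheta^*, \vmu)$ the SRRW kernel reduces to the base chain, i.e., $\rmK[\vmu] = \rmP$, so the driving noise is asymptotically that of a Markov chain with kernel $\rmP$ and stationary $\vmu$. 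I would compute the noise covariance $\rmU$ of \eqref{eqn:form_of_U} by solving the Poisson equation for $\rmP$, expanding $\vdelta_j - \vmu = \sum_{i<N}(\rvv_i^T \vdelta_j)\rvu_i$ in the spectral basis, and summing to obtain $\sum_i \frac{1+\lambda_i}{1-\lambda_i}\rvu_i\rvu_i^T$; the four blocks of $\rmU$ then follow by premultiplying and postmultiplying by the appropriate copies of $\rmH$ coming from the $\vtheta$-component of the mean field.

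With $\rmJ(\alpha)$ and $\rmU$ in hand, Part (b) follows from the classical single-timescale CLT for SA with Markov noise: almost-sure convergence is supplied by Theorem \ref{lemma:a.s.convergence_SA}, and $\beta_n^{-1/2}(\rvz_n - \rvz^*)$ is shown to be asymptotically Gaussian with covariance $\rmV^{(2)}(\alpha)$ solving the stated Lyapunov equation. The extra term $\mathds{1}_{\{b=1\}}\rmV^{(2)}(\alpha)$ on the left-hand side is the usual correction introduced by the scaling $\beta_n^{-1/2}$ when $\beta_n = 1/(n+1)$.

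For Part (c), I would adapt the two-timescale CLT machinery of Mokkadem--Pelletier to the controlled-Markov-noise setting. In case (i), $\rvx_n$ is fast and by Lemma \ref{lemma:a.s.convergence_SRRW} its rescaled error converges to $N(\vzero,\rmV_\rvx(\alpha))$; the slow iterate $\vtheta_n$ sees $\rvx_n$ as equilibrated at $\vmu$, and a Schur-complement-type elimination of the fast block yields the effective driving noise on $\vtheta_n$. The identity $\rmJ_{22}(\alpha)\rvu_i = -(\alpha(1+\lambda_i)+1)\rvu_i$ for $i<N$, which follows by direct computation from \eqref{eqn:Jacobian} using $\rmP^T\rvu_i = \lambda_i\rvu_i$ and $\vone^T\rvu_i = 0$, produces exactly the factor $(\alpha(1+\lambda_i)+1)^{-2}$ inside $\rmU_\vtheta(\alpha)$ of \eqref{eqn:matrix_U_vtheta}, and therefore the integral form of $\rmV^{(1)}_\vtheta(\alpha)$. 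The cross-covariance between the slow and fast blocks vanishes at rate $\sqrt{\beta_n/\gamma_n} \to 0$, giving the block-diagonal $\rmV^{(1)}(\alpha)$ in \eqref{eqn:v_alpha}. In case (iii), the roles flip: $\vtheta_n$ is fast, so from its perspective $\rvx_n$ is essentially frozen, and by Assumption \ref{assump:5} it tracks the moving target $\rho(\rvx_n) \to \vtheta^*$; since the fast dynamics cannot exploit SRRW's self-repellence, the effective noise reduces to that of the base chain, giving $\rmV^{(3)}_\vtheta(\alpha)$ with the raw $\rmU_{11}$ inside the integrand and no $\alpha$ dependence.

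The main obstacle is the two-timescale CLT with controlled (non-martingale) Markov noise, which is not directly covered by existing two-timescale CLTs that assume martingale-difference noise. I would bridge this gap by decomposing each per-step innovation as a martingale difference plus a telescoping remainder obtained from the solution of the Poisson equation for $\rmK[\rvx_n]$ in a neighborhood of $\rvx_n = \vmu$, establishing tightness of the rescaled errors via Assumption \ref{assump:4}, and verifying that the resulting perturbation remainders decay at the correct rate on both timescales so that the Mokkadem--Pelletier decoupling goes through. The algebraic heart of case (i) is then the closed-form evaluation of $\rmU_\vtheta(\alpha)$: expressing the restricted Schur complement of $\rmJ(\alpha)$ in the spectral basis of $\rmP$ and collapsing the mixed terms via the eigenstructure above yield the neat single-sum formula in \eqref{eqn:matrix_U_vtheta}.
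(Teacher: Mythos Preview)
Your proposal is correct and follows essentially the same route as the paper: case (ii) is handled by augmenting to a single-timescale SA and invoking the Fort-type CLT with Markov noise, while cases (i) and (iii) are obtained by adapting the Mokkadem--Pelletier two-timescale decomposition after converting the controlled-Markov noise into a martingale difference plus telescoping remainder via the Poisson equation for $\rmK[\rvx_n]$. Your identification of the eigen-identity $\rmJ_{22}(\alpha)\rvu_i = -(\alpha(1+\lambda_i)+1)\rvu_i$ and the resulting Schur-type elimination producing the factor $(\alpha(1+\lambda_i)+1)^{-2}$ in $\rmU_\vtheta(\alpha)$ matches the paper's computation exactly; the only detail worth flagging is that the paper applies the Poisson-equation trick a \emph{second} time, to the conditional second moments $\E[M_{n+1}M_{n+1}^T\mid\gF_n]$, in order to isolate the limiting $\rmU_{ij}$ from residual terms that must be shown to vanish in Ces\`aro mean---a step your outline subsumes under ``verifying that the resulting perturbation remainders decay at the correct rate'' but which deserves explicit attention when you write this out.
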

For $k \in \{1,3\}$, SA-SRRW in \eqref{eqn:SRRW_SA_iterations} is a \textit{two-timescale SA} with \textit{controlled} Markov noise. While a few works study the CLT of \textit{two-timescale} SA with the stochastic input being a martingale-difference (i.i.d.) noise \citep{konda2004convergence,mokkadem2006convergence}, a CLT result covering the case of controlled Markov noise (e.g., $k \in \{1,3\}$), a far more general setting than martingale-difference noise, is still an open problem. Thus, we here prove our CLT for $k \in \{1,3\}$ from scratch by a series of careful decompositions of the Markovian noise, ultimately into a martingale-difference term and several \textit{non-vanishing} noise terms through repeated application of the Poisson equation \citep{benveniste2012adaptive,fort2015central}. Although the form of the resulting asymptotic covariance looks similar to that for the martingale-difference case in \citep{konda2004convergence,mokkadem2006convergence} at first glance, they are not equivalent. Specifically, $\rmV^{(k)}_{\vtheta}(\alpha)$ captures both the effect of SRRW hyper-parameter $\alpha$, as well as that of the underlying base Markov chain via eigenpairs $(\lambda_i,\rvu_i)$ of its transition probability matrix $\rmP$ in matrix $\rmU$, whereas the latter only covers the martingale-difference noise terms as a special case. 

When $k=2$, that is, $\beta_n = \gamma_n$, algorithm \eqref{eqn:SRRW_SA_iterations} can be regarded as a single-timescale SA algorithm. In this case, we utilize the CLT in \citet[Theorem 2.1]{fort2015central} to obtain the implicit form of $\rmV^{(2)}(\alpha)$ as shown in Theorem \ref{theorem:CLT_SA}. However, $\rmJ_{12}(\alpha)$ being non-zero for $\alpha > 0$ restricts us from obtaining an explicit form for the covariance term corresponding to SA iterate errors $\vtheta_n - \vtheta^*$. On the other hand, for $k \in \{1, 3\}$, the nature of two-timescale structure causes $\vtheta_n$ and $\rvx_n$ to become asymptotically independent with zero correlation terms inside $\rmV^{(k)}(\alpha)$ in \eqref{eqn:v_alpha}, and we can explicitly deduce $\rmV^{(k)}_{\vtheta}(\alpha)$. We now take a deeper dive into $\alpha$ and study its effect on $\rmV^{(k)}_{\vtheta}(\alpha)$.

\textbf{Covariance Ordering of SA-SRRW:} We refer the reader to Appendix \ref{appendix:covariance_ordeing_SA_SRRW} for proofs of all remaining results. We begin by focusing on case (i) and capturing the impact of $\alpha$ on $\rmV_{\vtheta}^{(1)}(\alpha)$.

\begin{proposition}\label{lemma:covariance_comparison}
    For all $\alpha_2 > \alpha_1 > 0$, we have $\rmV_{\vtheta}^{(1)}(\alpha_2) <_L \rmV_{\vtheta}^{(1)}(\alpha_1) <_L \rmV_{\vtheta}^{(1)}(0)$. Furthermore, $\rmV_{\vtheta}^{(1)}(\alpha)$ decreases to zero at a rate of $O(1/\alpha^2)$.
\end{proposition}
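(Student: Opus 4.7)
The plan is to exploit the closed-form expression for $\rmV^{(1)}_{\vtheta}(\alpha)$ in Theorem~\ref{theorem:CLT_SA}, which factors as a Lyapunov-type integral $\rmV^{(1)}_{\vtheta}(\alpha) = \Phi(\rmU_{\vtheta}(\alpha))$, where
\[
\Phi(\rmM) \triangleq \int_0^\infty e^{tA} \rmM e^{tA^T} dt, \qquad A \triangleq \nabla \rvh(\vtheta^*) + \tfrac{\mathds{1}_{\{b=1\}}}{2}\rmI.
\]
Assumption~\ref{assump:2} ensures that $A$ is Hurwitz, so the integral converges and $\Phi$ is a bounded linear operator on symmetric matrices. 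By uniqueness of solutions to the Lyapunov equation $A X + X A^T = -\rmM$ for Hurwitz $A$, $\Phi$ is in fact a linear bijection. The proof then reduces to two tasks: (a) establish strict Loewner monotonicity of $\alpha \mapsto \rmU_{\vtheta}(\alpha)$ together with its decay rate; and (b) transfer these properties through $\Phi$.

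For (a), write
\[
\rmU_{\vtheta}(\alpha) = \sum_{i=1}^{N-1} c_i(\alpha)\, \rmH^T \rvu_i \rvu_i^T \rmH, \qquad c_i(\alpha) \triangleq \frac{1}{(\alpha(1+\lambda_i)+1)^2}\cdot\frac{1+\lambda_i}{1-\lambda_i}.
\]
Since $\lambda_i \in (-1,1)$ for $i < N$, each $c_i(\alpha)$ is strictly positive, strictly decreasing in $\alpha \geq 0$, and of order $1/\alpha^2$ as $\alpha \to \infty$. The rank-one blocks $\rmH^T \rvu_i \rvu_i^T \rmH = (\rmH^T \rvu_i)(\rmH^T \rvu_i)^T$ are positive semi-definite, so for any $\alpha_2 > \alpha_1 \geq 0$ the difference $\rmU_{\vtheta}(\alpha_1) - \rmU_{\vtheta}(\alpha_2)$ is a nonnegative combination of PSD matrices, hence PSD. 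It is moreover nonzero provided at least one $\rvu_i^T \rmH$ is nonzero; in the degenerate case where all such projections vanish, $\rmU_{\vtheta}(\alpha) \equiv \vzero$, $\rmV^{(1)}_{\vtheta}(\alpha) \equiv \vzero$, and the claim holds trivially as equality, so I focus on the non-degenerate regime for strictness.

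For (b), linearity gives $\rmV^{(1)}_{\vtheta}(\alpha_1) - \rmV^{(1)}_{\vtheta}(\alpha_2) = \Phi(\rmU_{\vtheta}(\alpha_1) - \rmU_{\vtheta}(\alpha_2))$, and the integrand $e^{tA}(\rmU_{\vtheta}(\alpha_1) - \rmU_{\vtheta}(\alpha_2))e^{tA^T}$ is PSD for every $t \geq 0$, so the difference is PSD. For strictness I invoke injectivity of $\Phi$: if this image were zero, the Lyapunov equation would force $\rmU_{\vtheta}(\alpha_1) = \rmU_{\vtheta}(\alpha_2)$, contradicting step~(a). Chaining across $\alpha_2 > \alpha_1 > 0$ and across $\alpha_1 > 0$ versus $\alpha = 0$ produces the full ordering in the statement. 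Finally, boundedness of $\Phi$ combined with $\|\rmU_{\vtheta}(\alpha)\| = O(1/\alpha^2)$ yields $\|\rmV^{(1)}_{\vtheta}(\alpha)\| = O(1/\alpha^2)$, giving the claimed rate.

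The main subtlety is administrative rather than deep: one must track the mild non-degeneracy hypothesis (some $\rvu_i^T \rmH \neq \vzero$) that makes the Loewner inequalities strict, and invoke the standard fact that $\Phi$ is bijective whenever $A$ is Hurwitz. The conceptual highlight — and the source of the improvement from the $O(1/\alpha)$ sampling rate of Lemma~\ref{lemma:a.s.convergence_SRRW} to the $O(1/\alpha^2)$ optimization rate here — lies entirely in the squared denominator $(\alpha(1+\lambda_i)+1)^2$ in $c_i(\alpha)$. This factor itself traces back to the way the off-diagonal Jacobian block $\rmJ_{12}(\alpha) \propto \alpha$ in \eqref{eqn:Jacobian} enters the two-timescale CLT calculation quadratically when one solves for the $\vtheta$-block of the asymptotic covariance.
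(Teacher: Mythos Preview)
Your proposal is correct and follows essentially the same approach as the paper: both arguments reduce the Loewner ordering of $\rmV^{(1)}_{\vtheta}(\alpha)$ to the elementary monotonicity of the scalar coefficients $c_i(\alpha) = (1+\lambda_i)/[(1-\lambda_i)(\alpha(1+\lambda_i)+1)^2]$ in $\rmU_{\vtheta}(\alpha)$, and then push this through the Lyapunov integral. The paper does this by testing against vectors (showing $\rvy^T\rmU_{\vtheta}(\alpha)\rvy$ is decreasing for $\rvy = e^{tA^T}\rvx$), whereas you phrase it operator-theoretically via linearity and injectivity of $\Phi$; these are the same argument in different clothing, and your handling of the non-degeneracy needed for strict inequality matches the paper's footnoted caveat.
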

Proposition \ref{lemma:covariance_comparison} proves a monotonic reduction (in terms of Loewner ordering) of $\rmV_{\vtheta}^{(1)}(\alpha)$ as $\alpha$ increases. Moreover, the decrease rate $O(1/\alpha^2)$ surpasses the $O(1/\alpha)$ rate seen in $\rmV_{\rvx}(\alpha)$ and the sampling application in \citet[Corollary 4.7]{doshi2023self}, and is also empirically observed in our simulation in Section \ref{section:simulation}.\footnote{Further insights of $O(1/\alpha^2)$ are tied to the two-timescale structure, particularly $\beta_n = o(\gamma_n)$ in case (i), which places $\vtheta_n$ on the slow timescale so that the correlation terms $\rmJ_{12}(\alpha), \rmJ_{22}(\alpha)$ in the Jacobian matrix $\rmJ(\alpha)$ in \eqref{eqn:Jacobian} come into play. Technical details are referred to Appendix \ref{appendix:proof_CLT_case2}, where we show the form of $\rmU_{\vtheta}(\alpha)$.} Suppose we consider the same SA now driven by an \textit{i.i.d.} sequence $\{X_n\}$ with the same marginal distribution $\vmu$. Then, our Proposition \ref{lemma:covariance_comparison} asserts that a token algorithm employing SRRW (walk on a graph) with large enough $\alpha$ on a \textit{general} graph can actually produce better SA iterates with its asymptotic covariance going down to zero, than a `hypothetical situation' where the walker is able to access any node $j$ with probability $\mu_j$ from anywhere in one step (more like a random jumper). 
This can be seen by noting that for large time $n$, the scaled MSE $\E[\|\vtheta_n-\vtheta^*\|^2]/\beta_n$ is composed of the diagonal entries of the covariance matrix $\rmV_{\vtheta}$, which, as we discuss in detail in Appendix \ref{appendix:discussion_V_3_i.i.d.}, are decreasing in $\alpha$ as a consequence of the Loewner ordering in Proposition \ref{lemma:covariance_comparison}. For large enough $\alpha$, the scaled MSE for SA-SRRW becomes smaller than its \textit{i.i.d.} counterpart, which is always a constant. Although \citet{doshi2023self} alluded this for sampling applications with $\rmV_{\rvx}(\alpha)$, we broaden its horizons to distributed optimization problem with $\rmV_{\vtheta}(\alpha)$ using tokens on graphs. Our subsequent result concerns the performance comparison between cases (i) and (iii).
\begin{corollary}\label{lemma:case_comparison}
    For any $\alpha > 0$, we have $\rmV_{\vtheta}^{(1)}(\alpha) <_L \rmV_{\vtheta}^{(3)}(\alpha) = \rmV_{\vtheta}^{(3)}(0)$.
\end{corollary}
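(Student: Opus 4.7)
The plan has two parts, mirroring the two claims of the corollary.

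\textbf{Equality $\rmV_{\vtheta}^{(3)}(\alpha) = \rmV_{\vtheta}^{(3)}(0)$.} This is almost a visual observation from the closed form in Theorem 3.3(c):
\begin{equation*}
\rmV_{\vtheta}^{(3)}(\alpha) \;=\; \int_0^\infty e^{t\nabla_{\vtheta} \rvh(\vtheta^*)}\, \rmU_{11}\, e^{t\nabla_{\vtheta} \rvh(\vtheta^*)}\, dt,
\end{equation*}
which contains no $\alpha$ at all, since $\rmU_{11} = \sum_i \tfrac{1+\lambda_i}{1-\lambda_i}\rmH^T \rvu_i \rvu_i^T \rmH$ and $\nabla_{\vtheta}\rvh(\vtheta^*)$ are determined entirely by the base chain $\rmP$. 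So the integral is literally the same function of $\alpha$ — a constant. The underlying intuition is that in case (iii) the empirical measure $\rvx_n$ drifts so slowly relative to $\vtheta_n$ that, from the viewpoint of $\vtheta_n$, the SRRW kernel is effectively frozen at $\rmK[\vmu] = \rmP$, and the optimization iteration is asymptotically indistinguishable from one driven by the base chain alone, regardless of $\alpha$.

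\textbf{Strict Loewner bound $\rmV_{\vtheta}^{(1)}(\alpha) <_L \rmV_{\vtheta}^{(3)}(\alpha)$.} I would chain three comparisons:
\begin{equation*}
\rmV_{\vtheta}^{(1)}(\alpha)\; <_L\; \rmV_{\vtheta}^{(1)}(0)\; =\; \rmV_{\vtheta}^{(3)}(0)\; =\; \rmV_{\vtheta}^{(3)}(\alpha).
\end{equation*}
The first $<_L$ is exactly Proposition 3.3, and the last equality is part (a) just established. The middle equality amounts to matching the two integrands from Theorem 3.3(c) at $\alpha = 0$: setting $\alpha = 0$ in $\rmU_{\vtheta}(\alpha)$ collapses the $(\alpha(1+\lambda_i)+1)^{-2}$ factor to unity, giving $\rmU_{\vtheta}(0) = \sum_i \tfrac{1+\lambda_i}{1-\lambda_i}\rmH^T\rvu_i\rvu_i^T\rmH = \rmU_{11}$, so the two integrands agree provided the exponentials match. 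That in turn reduces to checking that the indicator $\mathds{1}_{\{b=1\}}$ in the case (i) exponent vanishes, which is automatic in the regime where the comparison is even defined: case (iii) forces $\gamma_n = o(\beta_n)$, i.e., $b<a$, and Assumption A2 gives $a \leq 1$, so $b < 1$ necessarily. Hence the indicator is $0$, the two exponentials coincide, and the middle equality holds.

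No step in this plan is a genuine technical obstacle — the corollary is essentially a bookkeeping consequence of Theorem 3.3 together with Proposition 3.3. The only subtlety worth flagging is the indicator term $\mathds{1}_{\{b=1\}}$ in the case (i) formula: it is crucial to observe that invoking case (iii) automatically restricts attention to $b<1$, which neutralises this term. Without this restriction, the $\tfrac{1}{2}\rmI$ correction would survive in the case (i) exponent and one would need a separate Lyapunov-comparison argument to reestablish the middle equality, so this alignment of step-size regimes is the one step to justify carefully.
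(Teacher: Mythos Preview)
Your proposal is correct and follows essentially the same approach as the paper's proof: both observe that $\rmV_{\vtheta}^{(3)}(\alpha)$ is independent of $\alpha$, invoke Proposition~\ref{lemma:covariance_comparison} for the strict ordering $\rmV_{\vtheta}^{(1)}(\alpha) <_L \rmV_{\vtheta}^{(1)}(0)$, and then verify $\rmV_{\vtheta}^{(1)}(0) = \rmV_{\vtheta}^{(3)}(0)$ via $\rmU_{\vtheta}(0) = \rmU_{11}$. Your explicit handling of the indicator $\mathds{1}_{\{b=1\}}$ --- noting that case~(iii) forces $b<a\leq 1$ so the comparison is only meaningful when $b<1$ --- is a point the paper's proof leaves implicit, so your treatment is arguably more careful on that front.
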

We show that case (i) is asymptotically better than case (iii) for $\alpha > 0$. In view of Proposition \ref{lemma:covariance_comparison} and Corollary \ref{lemma:case_comparison}, the advantages of case (i) become prominent.

\section{Simulation}\label{section:simulation}
In this section, we simulate our SA-SRRW algorithm on the wikiVote graph \citep{leskovec2014snap}, comprising $889$ nodes and $2914$ edges. We configure the SRRW's base Markov chain $\rmP$ as the MHRW with a uniform target distribution $\vmu = \frac{1}{N}\vone$. For distributed optimization, we consider the following $L_2$ regularized binary classification problem:
\vspace{-1mm}
\begin{equation}
\textstyle    \min_{\vtheta \in \sR^D} \left\{f(\vtheta) \triangleq \frac{1}{N} \sum_{i=1}^N \log\left(1+e^{\vtheta^T \rvs_i}\right) -y_i \left(\vtheta^T \rvs_i\right)  + \frac{\kappa}{2} \| \vtheta\|^2\right\},
\vspace{-1mm}
\end{equation}
where $\{(\rvs_i,y_i)\}_{i=1}^N$ is the \textit{ijcnn1} dataset (with $22$ features, i.e., $\rvs_i \in \sR^{22}$) from LIBSVM \citep{chang2011libsvm}, and penalty parameter $\kappa = 1$. Each node in the wikiVote graph is assigned one data point, thus $889$ data points in total. We perform SRRW driven SGD (SGD-SRRW) and SRRW driven stochastic heavy ball (SHB-SRRW) algorithms (see \eqref{eqn:examples_SGD_variants} in Appendix \ref{appendix:sgd_variants} for its algorithm). We fix the step size $\beta_n \!=\! (n+1)^{-0.9}$ for the SA iterates and adjust $\gamma_n \!=\! (n+1)^{\!-a}$ in the SRRW iterates to cover all three cases discussed in this paper: (i) $a \!=\! 0.8$; (ii) $a \!=\! 0.9$; (iii) $a \!=\! 1$. We use mean square error (MSE), i.e., $\E[\|\vtheta_n \!-\! \vtheta^*\|^2]$, to measure the error on the SA iterates.

Our results are presented in Figures \ref{fig:2} and \ref{fig:3}, where each experiment is repeated $100$ times. Figures \ref{fig:2a} and \ref{fig:2b}, based on wikiVote graph, highlight the consistent performance ordering across different $\alpha$ values for both algorithms over almost all time (not just asymptotically). Notably, curves for $\alpha \geq 5$ outperform that of the \textit{i.i.d.} sampling (in black) even under the graph constraints. Figure \ref{fig:2c} on the smaller \textit{Dolphins} graph \citep{nr} - $62$ nodes and $159$ edges - illustrates that the points of ($\alpha$, MSE) pair arising from SGD-SRRW at time $n = 10^7$ align with a curve in the form of $g(x) \!=\! \frac{c_1}{(x+c_2)^2} \!+\! c_3$ to showcase $O(1/\alpha^2)$ rates. This smaller graph allows for longer simulations to observe the asymptotic behaviour. Additionally, among the three cases examined at identical $\alpha$ values, Figures \ref{fig:3a} - \ref{fig:3c} confirm that case (i) performs consistently better than the rest, underscoring its superiority in practice. Further results, including those from non-convex functions and additional datasets, are deferred to Appendix \ref{appendix:simulation_results} due to space constraints.

\begin{figure}[t]
    \centering
    \begin{subfigure}[b]{0.3\textwidth}
        \centering
        \includegraphics[width=\textwidth]{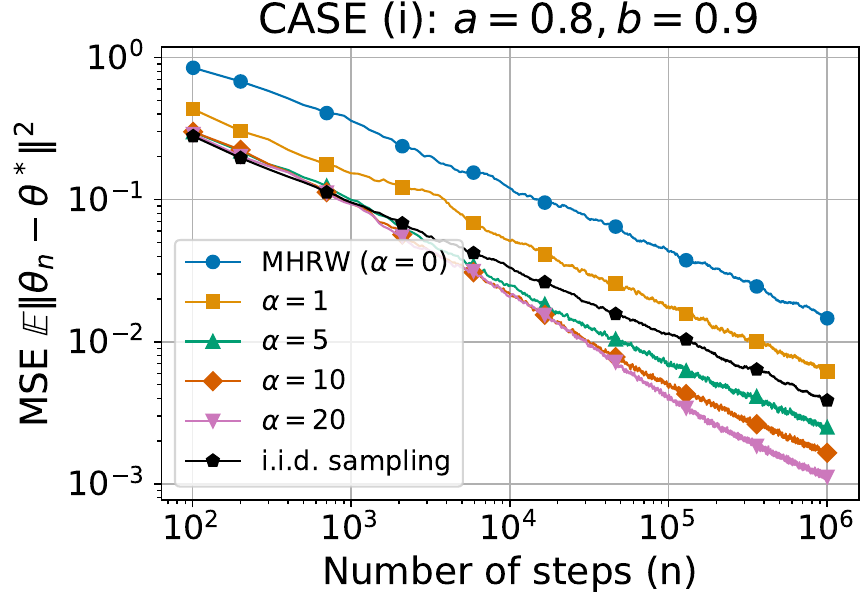}
        \caption{SGD-SRRW.}
        \label{fig:2a}
    \end{subfigure}
    \hfill
    \begin{subfigure}[b]{0.3\textwidth}
        \centering
        \includegraphics[width=\textwidth]{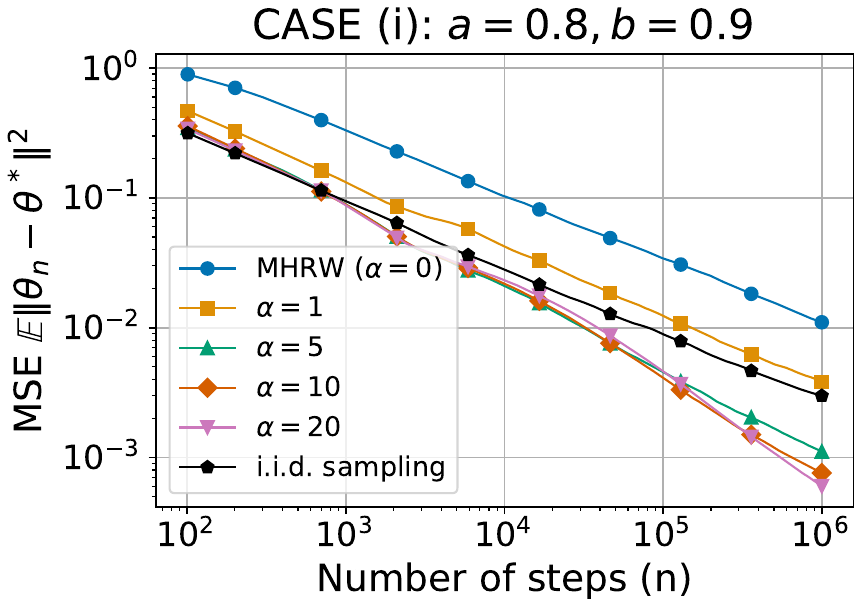}
        \caption{SHB-SRRW.}
        \label{fig:2b}
    \end{subfigure}
    \hfill
    \begin{subfigure}[b]{0.3\textwidth}
        \centering
        \includegraphics[width=\textwidth]{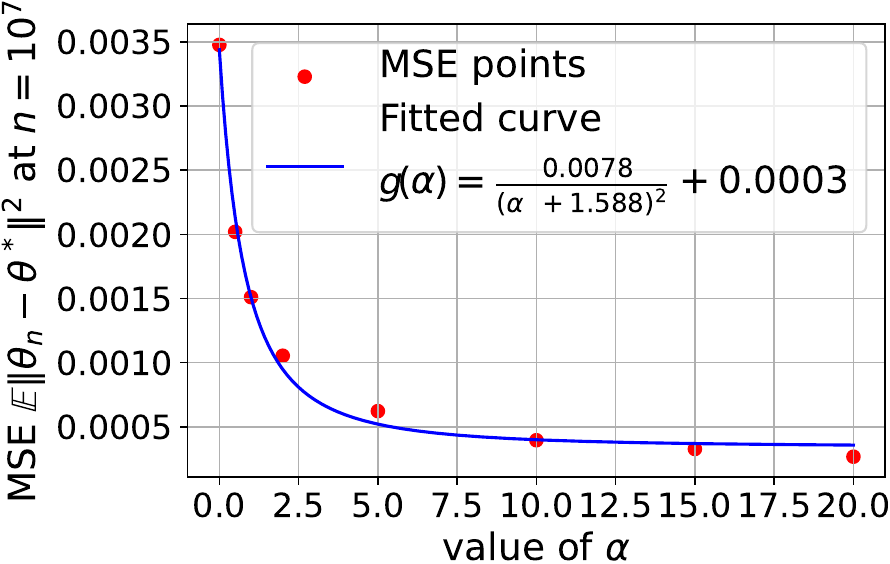}
        \caption{Curve fitting result for MSE.}
        \label{fig:2c}
    \end{subfigure}  
    \vspace{-3mm}
    \caption{Simulation results under case (i): (a) and (b) show the performance of SGD-SRRW and SHB-SRRW for various $\alpha$ values. (c) shows that MSE decreases at $O(1/\alpha^2)$ speed.}
    \label{fig:2}
    \vspace{-3mm}
\end{figure}
\begin{figure}[t]
    \centering
    \begin{subfigure}[b]{0.3\textwidth}
        \centering
        \includegraphics[width=\textwidth]{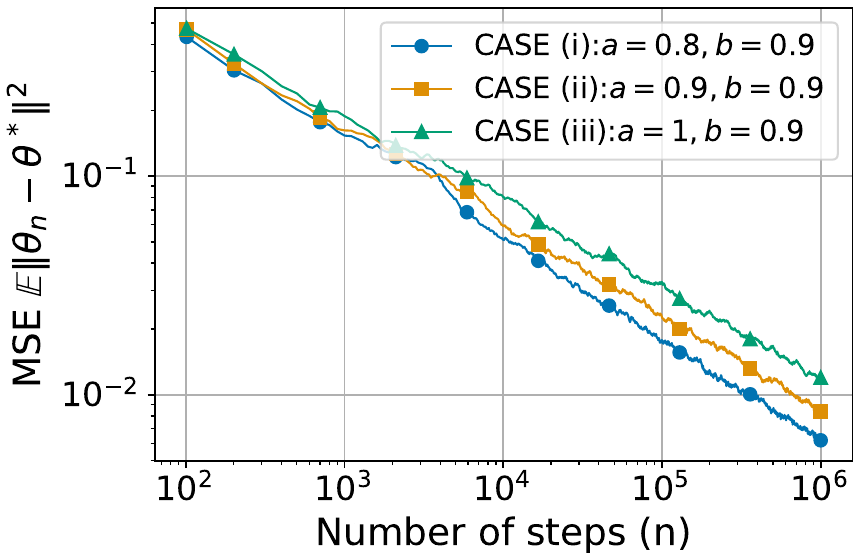}
        \caption{$\alpha = 1$, SGD-SRRW}
        \label{fig:3a}
    \end{subfigure}
    \hfill
    \begin{subfigure}[b]{0.3\textwidth}
        \centering
        \includegraphics[width=\textwidth]{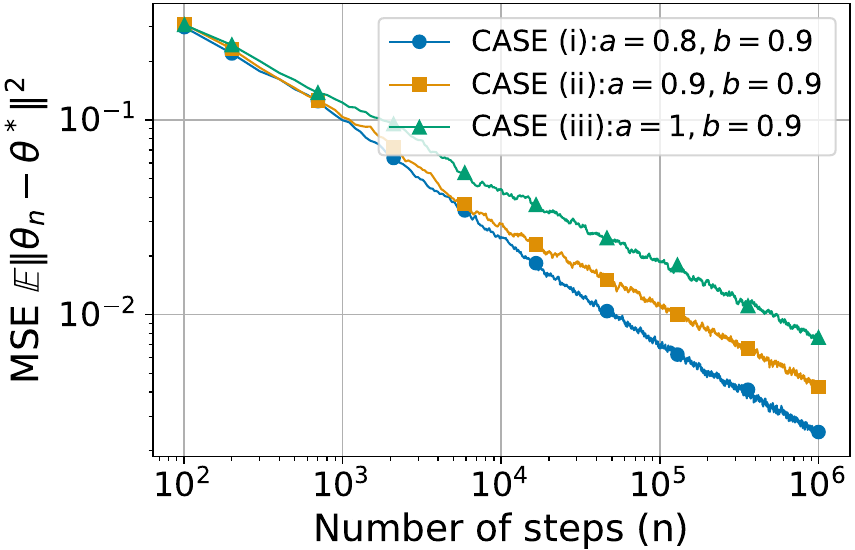}
        \caption{$\alpha = 5$, SGD-SRRW}
        \label{fig:3b}
    \end{subfigure}
    \hfill
    \begin{subfigure}[b]{0.3\textwidth}
        \centering
        \includegraphics[width=\textwidth]{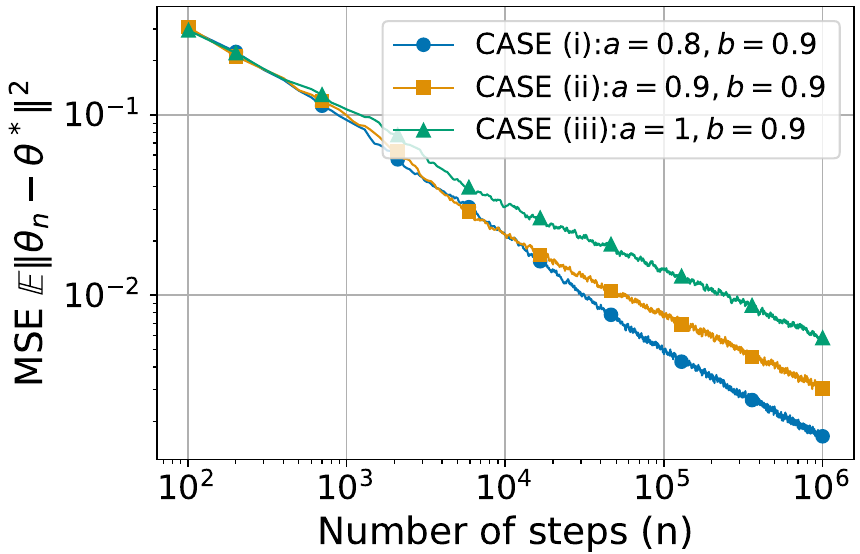}
        \caption{$\alpha = 10$, SGD-SRRW}
        \label{fig:3c}
    \end{subfigure}
    \vspace{-3mm}
    \caption{Comparison of the performance among cases (i) - (iii) for $\alpha \in \{1,5,10\}$.}
    \label{fig:3}
    \vspace{-5mm}
\end{figure}

\section{Conclusion}

In this paper, we show both theoretically and empirically that the SRRW as a drop-in replacement for Markov chains can provide significant performance improvements when used for token algorithms, where the \textit{acceleration} comes purely from the careful analysis of the stochastic input of the algorithm, without changing the optimization iteration itself. Our paper is an instance where the asymptotic analysis approach allows the design of better algorithms despite the usage of unconventional noise sequences such as nonlinear Markov chains like the SRRW, for which traditional finite-time analytical approaches fall short, thus advocating their wider adoption.

\bibliography{iclr2024_conference}
\bibliographystyle{iclr2024_conference}

\newpage
\appendix
\section{Examples of Stochastic Algorithms of the form \eqref{eqn:SA_single_timescale}.}\label{appendix:sgd_variants}
In the literature of stochastic optimizations, many SGD variants have been proposed by introducing an auxiliary variable to improve convergence. In what follows, we present two SGD variants with decreasing step size that can be presented in the form of \eqref{eqn:SA_single_timescale}: SHB \citep{gadat2018stochastic,li2022revisiting} and momentum-based algorithm \citep{barakat2021stochastic,barakat2021convergence}.
\begin{equation}\label{eqn:examples_SGD_variants}
\begin{split}
&\begin{cases}
\!\vtheta_{n+1} \!=\! \vtheta_n \!-\! \beta_{n+1} \rvm_{n} \\
\!\rvm_{n+1} \!=\! \rvm_n \!+\! \beta_{n+1} \!(\nabla\! F(\vtheta_n,X_{n+1}) \!-\! \rvm_n), 
\end{cases} \!\!\!\!\!
\begin{cases}
\!\rvv_{n+1} \!=\! \rvv_n \!+\! \beta_{n+1} \!(\nabla\! F(\vtheta_n, X_{n+1})^{2} \!\!-\!\rvv_n),  \\
\!\rvm_{n+1} \!=\! \rvm_n \!+\! \beta_{n+1} \!(\nabla\! F(\vtheta_n, X_{n+1}) \!-\! \rvm_n), \\
\!\vtheta_{n+1} \!=\! \vtheta_n \!-\! \beta_{n+1}\rvm_{n} / \sqrt{\rvv_{n} + \epsilon},
\end{cases} \\
& ~~~~~~~~~~~~~~~~~~~~~~~~~~~\text{(a). SHB} ~~~~~~~~~~~~~~~~~~~~~~~~~~~~~~~~~~~~ \text{(b). Momentum-based Algorithm}
\end{split}
\vspace{-5mm}
\end{equation}
where $\epsilon > 0$, $\vtheta_n, \rvm_n, \rvv_n, \nabla F(\vtheta,X) \in \sR^d$, and the square and square root in \eqref{eqn:examples_SGD_variants} (b) are element-wise operators.\footnote{For ease of expression, we simplify the original SHB and momentum-based algorithms from \citet{gadat2018stochastic,li2022revisiting, barakat2021stochastic,barakat2021convergence}, setting all tunable parameters to $1$ and resulting in \eqref{eqn:examples_SGD_variants}.} 

For SHB, we introduce an augmented variable $\rvz_n$ and function $H(\rvz_n, X_{n+1})$ defined as follows:
\begin{equation*}
    \rvz_n \triangleq \begin{bmatrix}
        \vtheta_n \\ \rvm_n
    \end{bmatrix} \in \sR^{2d}, \quad H(\rvz_n, X_{n+1}) \triangleq \begin{bmatrix}
        -\rvm_n \\ \nabla F(\vtheta_n, X_{n+1}) - \rvm_n
    \end{bmatrix} \in \sR^{2d}.
\end{equation*}
For the general momentum-based algorithm, we define 
\begin{equation*}
    \rvz_n \triangleq \begin{bmatrix}
        \rvv_n \\ \rvm_n \\ \vtheta_n
    \end{bmatrix} \in \sR^{3d}, \quad H(\rvz_n, X) \triangleq \begin{bmatrix}
        \nabla F(\vtheta_n, X_{n+1})^2 - \rvv_n \\ \nabla F(\vtheta_n, X_{n+1}) - \rvm_n \\ -\rvm_n/ \sqrt{\rvv_{n} + \epsilon}
    \end{bmatrix} \in \sR^{3d}.
\end{equation*}
Thus,  we can reformulate both algorithms in \eqref{eqn:examples_SGD_variants} as $\rvz_{n+1} = \rvz_n + \beta_{n+1} H(\rvz_n, X_{n+1})$.
This augmentation approach was previously adopted in \citep{gadat2018stochastic,barakat2021stochastic,barakat2021convergence,li2022revisiting} to analyze the asymptotic performance of algorithms in \eqref{eqn:examples_SGD_variants} using an \textit{i.i.d.} sequence $\{X_n\}_{n\geq 0}$. Consequently, the general SA iteration \eqref{eqn:SA_single_timescale} includes these SGD variants. However, we mainly focus on the CLT for the general SA driven by SRRW in this paper. Pursuing the explicit CLT results of these SGD variants with specific form of function $H(\vtheta,X)$ driven by the SRRW sequence $\{X_n\}$ is out of the scope of this paper.

When we numerically test the SHB algorithm in Section \ref{section:simulation}, we use the exact form of \eqref{eqn:examples_SGD_variants} (a) and the stochastic sequence $\{X_n\}$ is now driven by the SRRW. Specifically, we consider MHRW with transition kernel $\rmP$ as the base Markov chain of the SRRW process, e.g., 
\begin{subequations}
\begin{equation*}
    P_{ij} = \begin{cases}
        \min\left\{\frac{1}{d_i}, \frac{1}{d_j}\right\} & \text{if node $j$ is the neighbor of node $i$}, \\
        0 & \text{otherwise},
    \end{cases}
\end{equation*}
\begin{equation*}
    P_{ii} = 1 - \sum_{j \in \gN} P_{ij}.~~~~~~~~~~~~~~~~~~~~~~~~~~~~~~~~~~~~~~~~~~~~~~~
\end{equation*}
\end{subequations}
Then, at each time step $n$,
\begin{subequations} 
    \begin{equation*}
        \text{Draw:} ~~~~~~~~
        X_{n+1} \sim \rmK_{X_{n}, \cdot}[\rvx_n], ~~~~~~~~~~~~~~~~~~~~~~~~
    \end{equation*}
    \begin{equation*}
        ~~~~~~~~~~~~~~~~~~~~~~~\text{where} ~~~~~~~~ K_{ij}[\rvx] \triangleq \frac{P_{ij} (x_j/\mu_j)^{-\alpha}}{\sum_{k\in\gN}P_{ik} (x_k/\mu_k)^{-\alpha}}, ~~~~~~\forall~i,j\in\gN,
    \end{equation*}
    \begin{equation*}
        \text{Update:} ~~~~~~  \rvx_{n+1} = \rvx_n +\gamma_{n+1}(\vdelta_{X_{n+1}} - \rvx_n),
    \end{equation*}
    \begin{equation*} 
       ~~\vtheta_{n+1} = \vtheta_n - \beta_{n+1}\rvm_n,
    \end{equation*}
    \begin{equation*} 
       ~~~~~~~~~~~~~~~~~~~~~~~~~~~~~~~~~~~~~~~\rvm_{n+1} = \rvm_n + \beta_{n+1}(\nabla F(\vtheta_n, X_{n+1}) - \rvm_n).
    \end{equation*}
\end{subequations}

\section{Discussion on Mean Field Function of SRRW Iterates \eqref{eqn:SRRW_iteration}}\label{appendix:non-asymptotic-analysis}
Non-asymptotic analyses have seen extensive attention recently in both \textit{single-timescale} SA literature \citep{sun2018markov,karimi2019non,chen2020finite,chen2022finite} and \textit{two-timescale} SA literature \citep{doan2021finite,zeng2021two}. Specifically, single-timescale SA has the following form:
\begin{equation*}
    \rvx_{n+1} = \rvx_n + \beta_{n+1}H(\rvx_n,X_{n+1}),
\end{equation*}
and function $h(\rvx) \triangleq \E_{X\sim\vmu}[H(\rvx,X)]$ is the mean field of function $H(\rvx,X)$. Similarly, for two-timescale SA, we have the following recursions:
\begin{equation*}
    \begin{split}
        \rvx_{n+1} = \rvx_n + \beta_{n+1}H_1(\rvx_n, \rvy_n, X_{n+1}), \\
        \rvy_{n+1} = \rvy_n + \gamma_{n+1}H_2(\rvx_n, \rvy_n, X_{n+1}),
    \end{split}
\end{equation*}
where $\{\beta_n\}$ and $\{\gamma_n\}$ are on different timescales, and function $h_i(\rvx,\rvy) \triangleq \E_{X\sim\vmu}[H_i(\rvx,\rvy,X)]$ is the mean field of function $H_i(\rvx,\rvy,X)$ for $i = \{1,2\}$. All the aforementioned works require the mean field function $h(\rvx)$ in the single-timescale SA (or $h_1(\rvx,\rvy),h_2(\rvx,\rvy)$ in the two-timescale SA) to be globally Lipschitz with a Lipschitz constant $L$ to proceed with the derivation of finite-time bounds including the constant $L$.

Here, we show that the mean field function $\vpi[\rvx] - \rvx$ in the SRRW iterates \eqref{eqn:SRRW_iteration} is not globally Lipschitz, where $\vpi[\rvx]$ is the stationary distribution of the SRRW kernel $\rmK[\rvx]$ defined in \eqref{eqn:SRRW_kernel}. To this end, we show that each entry of Jacobian matrix of $\vpi[\rvx]-\rvx$ goes unbounded because a multivariate function is Lipschitz if and only if it has bounded partial derivatives.  Note that from \citet[Proposition 2.1]{doshi2023self}, for the $i$-th entry of $\vpi[\rvx]$, we have
\begin{equation}
\vpi_i[\rvx] = \frac{\sum_{j\in\gN}\mu_iP_{ij}\left(x_i/\mu_i\right)^{-\alpha}\left(x_j/\mu_j\right)^{-\alpha}}{\sum_{i\in\gN}\sum_{j\in\gN}\mu_iP_{ij}\left(x_i/\mu_i\right)^{-\alpha}\left(x_j/\mu_j\right)^{-\alpha}}.
\end{equation}
Then, the Jacobian matrix of the mean field function $\vpi[\rvx] - \rvx$ , which has been derived in \citet[Proof of Lemma 3.4 in Appendix B]{doshi2023self}, is given as follows:
\begin{equation}\label{eqn:17}
\begin{split}
    &\frac{\partial (\vpi_i[\rvx] - x_i)}{\partial x_j} \\
    =& ~ \frac{2\alpha}{x_j}\cdot\frac{(\sum_{k\in\gN}\mu_iP_{ik}\left(x_i/\mu_i\right)^{-\alpha}\left(x_k/\mu_k\right)^{-\alpha})(\sum_{k\in\gN}\mu_jP_{jk}\left(x_j/\mu_j\right)^{-\alpha}\left(x_k/\mu_k\right)^{-\alpha})}{(\sum_{l\in\gN}\sum_{k\in\gN}\mu_lP_{lk}\left(x_l/\mu_l\right)^{-\alpha}\left(x_k/\mu_k\right)^{-\alpha})^2} \\
    &- \frac{\alpha}{x_j}\cdot\frac{\mu_iP_{ij}\left(x_i/\mu_i\right)^{-\alpha}\left(x_j/\mu_j\right)^{-\alpha}}{\sum_{l\in\gN}\sum_{k\in\gN}\mu_lP_{lk}\left(x_l/\mu_l\right)^{-\alpha}\left(x_k/\mu_k\right)^{-\alpha}}
\end{split}
\end{equation}
for $i, j \in \gN, i\neq j$, and 
\begin{equation}\label{eqn:18}
\begin{split}
&\frac{\partial (\vpi_i[\rvx] - x_i)}{\partial x_i} \\
=& ~ \frac{2\alpha}{x_i}\cdot\frac{(\sum_{k\in\gN}\mu_iP_{ik}\left(x_i/\mu_i\right)^{-\alpha}\left(x_k/\mu_k\right)^{-\alpha})^2}{(\sum_{l\in\gN}\sum_{k\in\gN}\mu_lP_{lk}\left(x_l/\mu_l\right)^{-\alpha}\left(x_k/\mu_k\right)^{-\alpha})^2} \\
&- \frac{\alpha}{x_i}\cdot\frac{\sum_{k\in\gN}\mu_iP_{ik}\left(x_i/\mu_i\right)^{-\alpha}\left(x_k/\mu_k\right)^{-\alpha} + \mu_iP_{ii}(x_i/\mu_i)^{-2\alpha}}{\sum_{l\in\gN}\sum_{k\in\gN}\mu_lP_{lk}\left(x_l/\mu_l\right)^{-\alpha}\left(x_k/\mu_k\right)^{-\alpha}}- 1
\end{split}
\end{equation}
for $i \in \gN$.
Since the empirical distribution $\rvx \in \text{Int}(\Sigma)$, we have $x_i \in (0,1)$ for all $i \in \gN$. For fixed $i$, assume $x_i = x_j$ and as they approach zero, the terms $(x_i/\mu_i)^{-\alpha}$,  $(x_j/\mu_j)^{-\alpha}$ dominate the fraction in \eqref{eqn:17} and both the numerator and the denominator of the fraction have the same order in terms of $x_i, x_j$. Thus, we have 
\begin{equation*}
\frac{\partial (\vpi_i[\rvx] - x_i)}{\partial x_j} = O\left(\frac{1}{x_j}\right)
\end{equation*}
such that the $(i,j)$-th entry of the Jacobian matrix can go unbounded as $x_j \to 0$. Consequently, $\vpi[\rvx] - \rvx$ is not globally Lipschitz for $\rvx \in \text{Int}(\Sigma)$.

\section{Discussion on Assumption \ref{assump:5}}\label{appendix:assumption5}
When $\gamma_n = o(\beta_n)$, iterates $\rvx_n$ has smaller step size compared to $\vtheta_n$, thus converges `slower' than $\vtheta_n$. From Assumption \ref{assump:5}, $\vtheta_n$ will intuitively converge to some point $\rho(\rvx)$ with the current value $\rvx$ from the iteration $\rvx_n$, i.e., $\E_{X \sim \vpi[\rvx]}[H(\rho(\rvx),X)] = 0$, while the Hurwitz condition is to ensure the stability around $\rho(\rvx)$. We can see that Assumption \ref{assump:2} is less stringent than \ref{assump:5} in that it only assumes such condition when $\rvx = \vmu$ such that $\rho(\vmu) = \vtheta^*$ rather than for all $\rvx \in \text{Int}(\Sigma)$.

One special instance of Assumption \ref{assump:5} is by assuming the linear SA, e.g., $H(\vtheta,i) = A_i \vtheta + b_i$. In this case, $\E_{X \sim \vpi[\rvx]}[H(\rho(\rvx),X)] = 0$ is equivalent to $\E_{i \sim \vpi[\rvx]}[A_i] \rho(\rvx) + \E_{i\sim \vpi[\rvx]}[b_i] = 0$. Under the condition that for every $\rvx \in \text{Int}(\Sigma)$, matrix $\E_{i \sim \vpi[\rvx]}[A_i]$ is invertible, we then have
\begin{equation*}
    \rho(\rvx) = - (\E_{i \sim \vpi[\rvx]}[A_i])^{-1} \cdot  \E_{i\sim \vpi[\rvx]}[b_i].
\end{equation*}
However, this condition is quite strict. Loosely speaking, $\E_{i \sim \vpi[\rvx]}[A_i]$ being invertible for any $\rvx$ is similar to saying that any convex combination of $\{A_i\}$ is invertible. For example, if we assume $\{A_i\}_{i\in\gN}$ are negative definite and they all share the same eigenbasis $\{\rvu_i\}$, e.g., $A_i = \sum_{j=1}^D \lambda^{i}_{j}\rvu_i\rvu_i^T$ and $\lambda_j^i < 0$ for all $i \in \gN, j \in [D]$. Then, $\E_{i \sim \vpi[\rvx]}[A_i]$ is invertible. 

Another example for Assumption \ref{assump:5} is when $H(\vtheta, i) = H(\vtheta, j)$ for all $i,j\in\gN$, which implies that each agent in the distributed learning has the same local dataset to collaboratively train the model. In this example, $\rho(\rvx) = \vtheta^*$ such that $$\E_{i \sim \vpi[\rvx]}[H(\rho(\rvx),i)] = h(\vtheta^*) = 0,$$
$$\E_{i\sim \vpi[\rvx]}[\nabla H(\rho(\rvx), i)] + \frac{\mathds{1}_{\{b=1\}}}{2}\rmI = \nabla h(\vtheta^*) + \frac{\mathds{1}_{\{b=1\}}}{2}\rmI \quad \text{being Hurwitz}.$$

\section{Proof of Lemma \ref{lemma:a.s.convergence_SRRW} and Lemma \ref{lemma:a.s.convergence_SA}}\label{appendix:proof_a.s.convergence}
In this section, we demonstrate the almost sure convergence of both $\vtheta_n$ and $\rvx_n$ together. This proof naturally incorporates the almost certain convergence of the SRRW iteration in Lemma \ref{lemma:a.s.convergence_SRRW}, since $\rvx_n$ is independent of $\vtheta_n$ (as indicated in \eqref{eqn:SRRW_SA_iterations}), allowing us to separate out its asymptotic results. The same reason applies to the CLT analysis of SRRW iterates and we refer the reader to Section \ref{appendix:proof_CLT_case1} for the CLT result of $\rvx_n$ in Lemma \ref{lemma:a.s.convergence_SRRW}. 

We will use different techniques for different settings of step sizes in Assumption \ref{assump:3}. Specifically, for step sizes $\gamma_n = (n+1)^{-a}, \beta_n = (n+1)^{-b}$, we consider the following scenarios:
\begin{description}
    \item[Scenario 1:] We consider case(ii): $1/2 < a = b \leq 1$, and will apply the almost sure convergence result of the single-timescale stochastic approximation in Theorem \ref{theorem:a.s.convergence_SA} and verify all the conditions therein.
    \item[Scenario 2:] We consider both case(i): $1/2 < a < b \leq 1$ and case (iii): $1/2 < b < a \leq 1$. In these two cases, step sizes $\gamma_n, \beta_n$ decrease at different rates, thereby putting iterates $\rvx_n, \vtheta_n$ on different timescales and resulting in a two-timescale structure. We will apply the existing almost sure convergence result of the two-timescale stochastic approximation with iterate-dependent Markov chain in \citet[Theorem 4]{yaji2020stochastic} where our SA-SRRW algorithm can be regarded as a special instance.\footnote{However, \citet{yaji2020stochastic} paper only analysed the almost sure convergence. The central limit theorem analysis remains unknown in the literature for the two-timescale stochastic approximation with iterate-dependent Markov chains. Thus, our CLT analysis in Section \ref{appendix:proof_CLT_result} for this two-timescale structure with iterate-dependent Markov chain is still novel and recognized as our contribution.}
\end{description}

\subsection{Scenario 1}\label{appendix:proof_a.s.convergence_case1}
In Scenario 1, we have $\beta_n = \gamma_n$. First, we rewrite \eqref{eqn:SRRW_SA_iterations} as
\begin{equation}\label{eqn:augmented_SA_SRRW}
    \begin{bmatrix}
    \vtheta_{n+1} \\ \rvx_{n+1}
\end{bmatrix} = \begin{bmatrix}
    \vtheta_{n} \\ \rvx_{n}
\end{bmatrix} + \gamma_{n+1} \begin{bmatrix}
    H(\vtheta_n. X_{n+1}) \\ \vdelta_{X_{n+1}} - \rvx_n
\end{bmatrix}.
\end{equation}
By augmentations, we define the variable $\rvz_n \triangleq \begin{bmatrix}
    \vtheta_n \\ \rvx_n
\end{bmatrix} \in \sR^{(N+D) \times 1}$ and the function $G(\rvz_n,i) \triangleq \begin{bmatrix}
    H(\vtheta_n, i) \\ \vdelta_i - \rvx_n
\end{bmatrix} \in \sR^{(N+d) \times 1}$. In addition, we define a new Markov chain $\{Y_n\}_{n\geq 0}$ in the same state space $\gN$ as SRRW sequence $\{X_n\}_{n\geq 0}$. With slight abuse of notation, the transition kernel of $\{Y_n\}$ is denoted by $\rmK'[\rvz_n] \equiv \rmK[\rvx_n]$ and its stationary distribution $\vpi'[\rvz_n] \equiv \vpi[\rvx_n]$, where $\rmK[\rvx_n]$ and $\vpi(\rvx_n)$ are the transition kernel and its corresponding stationary distribution of SRRW, with $\vpi[\rvx]$ of the form
\begin{equation}\label{eqn:expression_pi_x}
    \pi_i[\rvx] \propto \sum_{j\in\gN}\mu_i P_{ij} (x_i/\mu_i)^{-\alpha}(x_j/\mu_j)^{-\alpha}.
\end{equation}
Recall that $\vmu$ is the fixed point, i.e., $\vpi[\vmu] = \vmu$, and $\rmP$ is the base Markov chain inside SRRW (see \eqref{eqn:SRRW_kernel}). Then, the mean field 
$$g(\rvz) = \E_{Y \sim \vpi'(\rvz)}[G(\rvz,Y)] = \begin{bmatrix}
    \sum_{i\in\gN} \pi_i[\rvx] H(\vtheta,i) \\ \vpi[\rvx]-\rvx
\end{bmatrix},$$
and $\rvz^* = (\vtheta^*,\vmu)$ for $\vtheta^* \in \Theta$ in Assumption \ref{assump:2} is the root of $g(\rvz)$, i.e., $g(\rvz^*) = 0$.
The augmented iteration \eqref{eqn:augmented_SA_SRRW} becomes
\begin{equation}\label{eqn:augmented_SRRW_SA}
    \rvz_{n+1} = \rvz_n + \gamma_{n+1} G(\rvz_n,Y_{n+1})
\end{equation}
with the goal of solving $g(\rvz) = 0$. Therefore, we can treat \eqref{eqn:augmented_SRRW_SA} as an SA algorithm driven by a Markov chain $\{Y_n\}_{n\geq 0}$ with its kernel $\rmK'[\rvz]$ and stationary distribution $\vpi'[\rvz]$, which has been widely studied in the literature (e.g., \cite{delyon2000stochastic,benveniste2012adaptive,fort2015central,li2023online}). In what follows, we demonstrate that for any initial point $\rvz_0 = (\vtheta_0,\rvx_0) \in \sR^D \times \text{Int}(\Sigma)$, the SRRW iteration $\{\rvx_n\}_{n\geq 0}$ will almost surely converge to the target distribution $\vmu$, and the SA iteration $\{\vtheta_n\}_{n\geq 0}$ will almost surely converge to the set $\Theta$.

Now we verify conditions C1 - C4 in Theorem \ref{theorem:a.s.convergence_SA}.
Our assumption \ref{assump:4} is equivalent to condition C1 and assumption \ref{assump:3} corresponds to condition C2. For condition C3, we set $\nabla w(\rvz) \equiv -g(\rvz)$, and the set $S \equiv \{\rvz^*| \vtheta^* \in \Theta, \rvx^* = \vmu\}$, including disjoint points. For condition C4, since $\rmK'[\rvz]$, or equivalently $\rmK[\rvx]$, is ergodic and time-reversible for a given $\rvz$, as shown in the SRRW work \citet{doshi2023self}, it automatically ensures a solution to the Poisson equation, which has been well discussed in \citet[Section 2]{chen2020explicit} and \citet{benveniste2012adaptive,meyn2022control}. To show \eqref{eqn:solution_poisson_bounded} and \eqref{eqn:solution_poisson_local_lipschitz} in condition C4, for each given $\rvz$ and any $i \in \gN$, we need to give the explicit solution $m_{\rvz}(i)$ to the Poisson equation $ m_{\rvz}(i) - (\rmK'_{\rvz}m_{\rvz})(i) = G(\rvz, i) - g(\rvz)$ in \eqref{eqn:poisson_equation1}. The notation $(\rmK'_{\rvz}m_{\rvz})(i)$ is defined  as follows.
\begin{equation*}
    (\rmK'_{\vz}m_{\rvz})(i) = \sum_{j\in\gN} \rmK'_{\vz}(i,j)m(\rvz,j). 
\end{equation*}

Let $\rmG(\rvz) \triangleq [G(\rvz, 1), \cdots, G(\rvz,N)]^T \in \sR^{N \times D}$. We use $[\rmA]_{:,i}$ to denote the $i$-th column of matrix $\rmA$. Then, we let $m_{\rvz}(i)$ such that
\begin{equation}\label{eqn:derivation_m1}
    m_{\rvz}(i) = \sum_{k=0}^{\infty} \left([\rmG(\rvz)(\rmK'[\rvz]^k)^T]_{[:,i]} - g(\rvz)\right) = \sum_{k=0}^{\infty} [\rmG(\rvz)((\rmK'[\rvz]^k)^T-\vpi'[\rvz]\vone^T)]_{[:,i]}.
\end{equation}
In addition,
\begin{equation}\label{eqn:derivation_m2}
    (\rmK'_{\rvz}m_{\rvz})(i) = \sum_{k=1}^{\infty} [\rmG(\rvz)(\rmK'[\rvz]^T-\vpi'[\rvz]\vone^T)]_{[:,i]}.
\end{equation}
We can check that the $m_{\rvz}(i)$ form in \eqref{eqn:derivation_m1} is indeed the solution of the above Poisson equation. Now, by induction, we get $\rmK'[\rvz]^k - \vone\vpi'[\rvz]^T = (\rmK'[\rvz] - \vone\vpi'[\rvz]^T)^k$ for $k \geq 1$ and for $k = 0$, $\rmK'[\rvz]^0 - \vone\vpi'[\rvz]^T = (\rmK'[\rvz] - \vone\vpi'[\rvz]^T)^0 - \vone\vpi'[\rvz]^T$. Then,
\begin{equation}\label{eqn:function_m}
\begin{split}
    m_{\rvz}(i) =& \sum_{k=0}^{\infty} [\rmG(\rvz)(\rmK'[\rvz]^T-\vpi'[\rvz]\vone^T)^k]_{[:,i]} - g(\rvz) \\
    =& \left[\rmG(\rvz)\sum_{k=0}^{\infty}(\rmK'[\rvz]^T-\vpi'[\rvz]\vone^T)^k\right]_{[:,i]} - g(\rvz) \\
    =& \left[\rmG(\rvz)(\rmI - \rmK'[\rvz]^T + \vpi'[\rvz]\vone^T)^{-1}\right]_{[:,i]} - g(\rvz) \\
    =& \sum_{j \in \gN} (\rmI - \rmK'[\rvz] + \vone \vpi'[\rvz]^T)^{-1}(i,j) G(\rvz,j) - g(\rvz).
\end{split}
\end{equation}
Here, $(\rmI - \rmK'[\rvz] + \vone \vpi'[\rvz]^T)^{-1}$ is well defined because $\rmK'[\rvz]$ is ergodic and time-reversible for any given $\rvz$ (proved in \citet[Appendix A]{doshi2023self}).
Now that both functions $H(\vtheta,i)$ and $\vdelta_{i} -\rvx$ are bounded for each compact subset of $\sR^D \times \Sigma$ by our assumption \ref{assump:1}, function $G(\rvz,i)$ is also bounded within the compact subset of its domain. Thus, function $m_{\rvz}(i)$ is bounded, and \eqref{eqn:solution_poisson_bounded} is verified. Moreover, for a fixed $i \in \gN$, 
$$\sum_{j \in \gN} (\rmI - \rmK'[\rvz] + \vone \vpi'[\rvz]^T)^{-1}(i,j) \vdelta_j = (\rmI - \rmK'[\rvz] + \vone \vpi'[\rvz]^T)^{-1}_{[:,i]} = (\rmI - \rmK[\rvx] +  \vone\vpi[\rvx]^T)^{-1}_{[:,i]}$$
and this vector-valued function is continuous in $\rvx$ because $\rmK[\rvx], \vpi[\rvx]$ are continuous. We then rewrite \eqref{eqn:function_m} as
\begin{equation*}
    m_{\rvz}(i) = \begin{bmatrix}
        \sum_{j \in \gN} (\rmI - \rmK[\rvx] + \vone \vpi[\rvx]^T)^{-1}(i,j) H(\rvx,j) \\ 
        (\rmI - \rmK[\rvx]^T + \vpi[\rvx]\vone^T)^{-1}_{[:,i]}
    \end{bmatrix}  - \begin{bmatrix}
    \sum_{i\in\gN} \pi_i[\rvx] H(\vtheta,i) \\ \vpi[\rvx]-\rvx
\end{bmatrix}.
\end{equation*}
With continuous functions $H(\vtheta,i), \rmK[\rvx], \vpi[\rvx]$, we have $m_{\rvz}(i)$ continuous with respect to $\rvz$, so does $(\rmK'_{\rvz}m_{\rvz})(i)$. This implies that functions $m_{\rvz}(i)$ and $(\rmK'_{\rvz}m_{\rvz})(i)$ are locally Lipschitz, which satisfies \eqref{eqn:solution_poisson_local_lipschitz} with $\phi_{\gC}(x) = C_{\gC}x$ for some constant $C_{\gC}$ that depends on the compact set $\gC$. Therefore, condition C4 is checked, and we can apply Theorem \ref{theorem:a.s.convergence_SA} to show the almost convergence result of \eqref{eqn:augmented_SA_SRRW}, i.e., almost surely,
\begin{equation*}
    \lim_{n\to\infty} \rvx_n = \vmu, \quad \text{and} ~~~~ \limsup_{n\to\infty} \inf_{\vtheta^* \in \Theta} \|\vtheta_n - \vtheta^*\| = 0.
\end{equation*}
Therefore, the almost sure convergence of $\rvx_n$ in Lemma \ref{lemma:a.s.convergence_SRRW} is also proved. This finishes the proof in Scenario 1.

\subsection{Scenario 2}\label{appendix:proof_a.s.convergence_case2}

Now in this subsection, we consider the steps sizes $\gamma_n, \beta_n$ with $1/2 < a < b \leq 1$ and $1/2 < b < a \leq 1$. We will frequently use assumptions (B1) - (B5) in Section \ref{appendix:Asymptotic Results of Two-Timescale SA} and Theorem \ref{theorem:a.s.convergence_TTSA} to prove the almost sure convergence.

\subsubsection{Case (i): $1/2 < a < b \leq 1$}
In case (i), $\vtheta_n$ is on the slow timescale and $\rvx_n$ is on the fast timescale because iteration $\vtheta_n$ has smaller step size than $\rvx_n$, making $\vtheta_n$ converge slower than $\rvx_n$. 
Here, we consider the two-timescale SA of the form:
\begin{subequations}\label{eqn:TTSA_form_SA_SRRW}
    \begin{equation*}\label{eqn:TTSA_form_SA_SRRW1}
        \vtheta_{n+1} = \vtheta_n + \beta_{n+1} H(\vtheta,X_{n+1}),
    \end{equation*}
    \begin{equation*}
        \rvx_{n+1} = \rvx_n + \gamma_{n+1} (\vdelta_{X_{n+1}} - \rvx).
    \end{equation*}
\end{subequations}

Now, we verify assumptions (B1) - (B5) listed in Section \ref{appendix:Asymptotic Results of Two-Timescale SA}. 
\begin{itemize}
    \item Assumptions (B1) and (B5) are satisfied by our assumptions \ref{assump:3} and \ref{assump:4}.
    \item Our assumption \ref{assump:2} shows that the function $H(\vtheta,X)$ is continuous and differentiable w.r.t $\vtheta$ and grows linearly with $\|\vtheta\|$. In addition, $\vdelta_{X} - \rvx$ also satisfies this property. Therefore, (B2) is satisfied.
    \item Now that the function $\vpi[\rvx] - \rvx$ is independent of $\vtheta$, we can set $\rho(\vtheta) = \vmu$ for any $\vtheta \in \sR^D$ such that $\vpi[\vmu] - \vmu = 0$ from \citet[Proposition 3.1]{doshi2023self}, and 
\begin{equation*}
    \nabla_{\rvx} (\vpi(\rvx) - \rvx)|_{\rvx = \vmu} = 2\alpha \rvu\vone^T - \alpha \rmP^T - (\alpha+1) \rmI
\end{equation*}
from \citet[Lemma 3.4]{doshi2023self}, which is Hurwitz. Furthermore, $\rho(\vtheta) = \vmu$ inherently satisfies the condition $\|\rho(\vtheta)\| \leq L_2(1 + \|\vtheta\|)$ for any $L_2 \geq \|\vmu\|$.
Thus, conditions (i) - (iii) in (B3) are satisfied. Additionally, $\sum_{i\in\gN} \vpi_i[\rho(\vtheta)] H(\vtheta,i) = \sum_{i\in\gN} \vpi_i[\rvx] = \rvh(\vtheta)$ such that for $\vtheta^* \in \Theta$ defined in assumption \ref{assump:2}, $\sum_{i\in\gN} \vpi_i[\rho(\vtheta^*)] H(\vtheta^*,i) = \rvh(\vtheta^*) = 0$, and $\nabla_{\vtheta} \rvh(\vtheta^*)$ is Hurwitz. Therefore, (B3) is checked.
    \item  Assumption (B4) is verified by the nature of SRRW, i.e., its transition kernel $\rmK[\rvx]$ and the corresponding stationary distribution $\vpi[\rvx]$ with $\vpi[\vmu] = \vmu$.
\end{itemize}
Consequently, assumptions (B1) - (B5) are satisfied by our assumptoins \ref{assump:1} - \ref{assump:4} and by Theorem \ref{theorem:a.s.convergence_TTSA}, we have $\lim_{n\to\infty}\rvx_n = \vmu$ and $\vtheta_n \to \Theta$ almost surely.

Next, we consider $1/2 < b < a \leq 1$. As discussed before, (B1), (B2), (B4) and (B5) are satisfied by our assumptions \ref{assump:1} - \ref{assump:4} and the properties of SRRW. The only difference for this step size setting, compared to the previous one $1/2 < a < b \leq 1$, is that the roles of $\vtheta_n, \rvx_n$ are now flipped, that is, $\vtheta_n$ is now on the fast timescale while $\rvx_n$ is on the slow timescale. By a much stronger Assumption \ref{assump:5}, for any $\rvx \in \text{Int}(\Sigma)$, (i) $\E_{X \sim \vpi[\rvx]}[H(\rho(\rvx),X)] = 0$; (ii) $\E_{X \sim \vpi[\rvx]}[\nabla H(\rho(\rvx),X)]$ is Hurwitz; (iii) $\|\rho(\rvx)\| \leq L_2(1 + \|\rvx\|)$. Hence, conditions (i) - (iii) in (B3) are satisfied. Moreover, we have $\vpi[\vmu] - \vmu = 0$, $\nabla (\vpi[\rvx] - \rvx)|_{\rvx=\vmu}$ being Hurwitz, as mentioned in the previous part. Therefore, (B3) is verified. Accordingly, (B1) - (B5) are checked by our assumptions \ref{assump:1}, \ref{assump:3}, \ref{assump:5}, \ref{assump:4}. By Theorem \ref{theorem:a.s.convergence_TTSA}, we have $\lim_{n\to\infty}\rvx_n = \vmu$ and $\vtheta_n \to \Theta$ almost surely.

\section{Proof of Theorem \ref{theorem:CLT_SA}}\label{appendix:proof_CLT_result}
This section is devoted to the proof of Theorem \ref{theorem:CLT_SA}, which also includes the proof of the CLT results for the SRRW iteration $\rvx_n$ in Lemma \ref{lemma:a.s.convergence_SRRW}. We will use different techniques depending on the step sizes in Assumption \ref{assump:3}. Specifically, for step sizes $\gamma_n = (n+1)^{-a}, \beta_n = (n+1)^{-b}$, we will consider three cases: case (i): $\beta_n = o(\gamma_n)$; case (ii): $\beta_n = \gamma_n$; and case (iii): $\gamma_n = o(\beta_n)$. For case (ii), we will use the existing CLT result for single-timescale SA in Theorem \ref{theorem:CLT_classical_SA}. For cases (i) and (iii), we will construct our own CLT analysis for the two-timescale structure. We start with case (ii).

\subsection{Case (ii): $\beta_n = \gamma_n$}\label{appendix:proof_CLT_case1}
In this part, we stick to the notations for single-timescale SA studied in Section \ref{appendix:proof_a.s.convergence_case1}. To utilize Theorem \ref{theorem:CLT_classical_SA}, apart from Conditions C1 - C4 that have been checked in Section \ref{appendix:proof_a.s.convergence_case1}, we still need to check conditions C5 and C6 listed in Section \ref{section:asymptotic_results_single_timescale_SA}. 

Assumption \ref{assump:2} corresponds to condition C5. For condition C6, we need to obtain the explicit form of function $Q_{\rvz}$ to the Poisson equation defined in \eqref{eqn:poisson_equation1}, that is,
\begin{equation*}
Q_{\rvz}(i) - (\rmK'_{\rvz}Q_{\rvz})(i) = \psi(\rvz, i) - \E_{j\sim \vpi[\rvz]}[\psi(\rvz,j)]
\end{equation*}
where 
\begin{equation*}
    \psi(\rvz,i) \triangleq \sum_{j\in\gN} \rmK'_{\rvz}(i,j) m_{\rvz}(j)m_{\rvz}(j)^T - (\rmK'_{\rvz}m_{\rvz})(i)(\rmK'_{\rvz}m_{\rvz})(i)^T.
\end{equation*}
Following the similar steps in the derivation of $m_{\rvz}(i)$ from \eqref{eqn:derivation_m1} to \eqref{eqn:function_m}, we have
\begin{equation*}
    Q_{\rvz}(i) = \sum_{j \in \gN} (\rmI - \rmK'[\rvz] + \vone \vpi'[\rvz]^T)^{-1}(i,j) m_{\rvz}(j) - \pi'_j[\rvz] m_{\rvz}(j).
\end{equation*}
We also know that $Q_{\rvz}(i)$ and $(\rmK'_{\rvz}Q_{\rvz})(i)$ are continuous in $\rvz$ for any $i \in \gN$. For any $\rvz$ in a compact set $\Omega$, $Q_{\rvz}(i)$ and $(\rmK'_{\rvz}Q_{\rvz})(i)$ are bounded because function $m_{\rvz}(i)$ is bounded. Therefore, C6 is checked. By Theorem \ref{theorem:CLT_classical_SA}, assume $\rvz_n = \begin{bmatrix}
    \vtheta_n \\ \rvx_n
\end{bmatrix}$ converges to a point $\rvz^* = \begin{bmatrix}
    \vtheta^* \\ \vmu
\end{bmatrix}$ for $\vtheta^* \in \Theta$, we have
	\begin{equation}\label{eqn:CLT_single_timescale2}
		\gamma_n^{-1/2} (\rvz_n - \rvz^*)  \xrightarrow[n\to\infty]{dist.} N(0, \rmV),
	\end{equation}
	where $\rmV$ is the solution of the following Lyapunov equation
	\begin{equation}\label{eqn:Lyapunov equation}
			\rmV\left(\frac{\mathds{1}_{\{b=1\}}}{2}\rmI + \nabla g(\vz^*)^T\right) + \left(\frac{\mathds{1}_{\{b=1\}}}{2}\rmI + \nabla g(\vz^*)\right)\rmV +\rmU = 0,
	\end{equation}
and $\rmU = \sum_{i\in\gN} \mu_i \left(m_{\rvz^*}(i)m_{\rvz^*}(i)^T - (\rmK_{\rvz^*}m_{\rvz^*})(i)(\rmK_{\rvz^*}m_{\rvz^*})(i)^T \right)$. 

By algebraic calculations of derivative of $\vpi[\rvx]$ with respect to $\rvx$ in \eqref{eqn:expression_pi_x},\footnote{One may refer to \citet[Appendix B, Proof of Lemma 3.4]{doshi2023self} for the computation of $\frac{\partial \vpi[\rvx] - \rvx}{\partial \rvx}$.} we can rewrite $\nabla g(\rvz^*)$ in terms of $\rvx, \vtheta$, i.e.,
\begin{equation*}
\begin{split}
    \rmJ(\alpha) \triangleq \nabla g(\rvz^*) &= \begin{bmatrix}
        \frac{\partial \sum_{i\in\gN} \pi_i[\rvx] H(\vtheta,i) }{\partial \vtheta} & \frac{\partial \sum_{i\in\gN} \pi_i[\rvx] H(\vtheta,i)}{\partial \rvx}\\ \frac{\partial (\vpi[\rvx] - \rvx)}{\partial \vtheta} & \frac{\partial \vpi[\rvx] - \rvx}{\partial \rvx}
    \end{bmatrix}_{\rvz = \rvz^*} \\
    &= \begin{bmatrix}
       \nabla \rvh(\vtheta^*)   & -\alpha \rmH^T(\rmP^T + \rmI) \\
        \vzero & 2\alpha \boldsymbol{\mu}\vone^T - \alpha \rmP^T - (\alpha+1)\rmI
    \end{bmatrix} \triangleq \begin{bmatrix}
        \rmJ_{11} & \rmJ_{12}(\alpha) \\ \rmJ_{21} & \rmJ_{22}(\alpha)
    \end{bmatrix},
\end{split}
\end{equation*}
where matrix $\rmH = [H(\vtheta^*,1),\cdots, H(\vtheta,N)]^T$. Then, we further clarify the matrix $\rmU$. Note that 
\begin{equation}\label{eqn:34}
    m_{\rvz^*}(i) = \sum_{k=0}^{\infty} [\rmG(\rvz^*)((\rmP^k)^T-\vmu\vone^T)]_{[:,i]} = \sum_{k=0}^{\infty} [\rmG(\rvz^*)(\rmP^k)^T]_{[:,i]} = \E\left[\left.\sum_{k=0}^{\infty} [G(\rvz^*,X_k)] \right| X_0 = i \right]\!\!,
\end{equation}
where the first equality holds because $\rmK'[\vmu] = \rmP$ from the definition of SRRW kernel \eqref{eqn:SRRW_kernel}, the second equality stems from $\rmG(\rvz^*) \vmu = g(\rvz^*) = 0$, and the last term is a conditional expectation over the \textit{base} Markov chain $\{X_k\}_{k\geq 0}$ (with transition kernel $\rmP$) conditioned on $X_0 = i$. Similarly, with $(\rmK'_{\rvz}m_{\rvz})(i)$ in the form of \eqref{eqn:derivation_m2}, we have
\begin{equation*}
    (\rmK'_{\rvz}m_{\rvz})(i) = \E\left[\left.\sum_{k=1}^{\infty} [G(\rvz^*,X_k)] \right| X_0 = i \right].
\end{equation*}

From the form `$\sum_{i\in\gN} \mu_i$' inside the matrix $\rmU$, the Markov chain $\{X_k\}_{k\geq 0}$ is in its stationary regime from the beginning, i.e., $X_k \sim \vmu$ for any $k\geq 0$. Hence, 
\begin{equation}\label{eqn:asym_cov_theta_i}
	\begin{split}
		\rmU = &~ \E\left[ \left(\sum_{k=0}^{\infty} [G(\rvz^*,X_k)]\right)\left(\sum_{k=0}^{\infty} [G(\rvz^*,X_k)]\right)^T\right] \\
        &~~~~~~~~~~~~~~~~~~~~~~~~~~~~~~~~~~~~~~~~~~ 
        ~~~~~~~~~~~~~~~~~~
        - \E\left[ \left(\sum_{k=1}^{\infty} [G(\rvz^*,X_k)]\right)\left(\sum_{k=1}^{\infty} [G(\rvz^*,X_k)]\right)^T\right] \\
		= &~ \E\left[G(\rvz^*,X_0)G(\rvz^*,X_0)^T\right] \!+\! \E\left[G(\rvz^*,X_0)\left(\sum_{k=1}^{\infty} G(\rvz^*,X_k)\right)^T\right]  \\
        &~~~~~~~~~~~~~~~~~~~~~~~~~~~~~~~~~~~~~~~~~~
        ~~~~~~~~~~~~~~~~~~~~~~~~~~~~~~~~~ +\E\left[\left(\sum_{k=1}^{\infty} G(\rvz^*,X_k)\right)G(\rvz^*,X_0)^T\right] \\
		= &~ \text{Cov}(G(\rvz^*, X_0), G(\rvz^*, X_0)) \\
        &~~~~~~~~~~~~~~~~~~~~~
        + \sum_{k=1}^{\infty} \left[\text{Cov}(G(\rvz^*, X_0), G(\rvz^*, X_k)) + \text{Cov}(G(\rvz^*, X_k), G(\rvz^*, X_0))\right], \\
	\end{split}
\end{equation}
where the covariance between $G(\rvz^*,X_0)$ and $G(\rvz^*,X_k)$ for the Markov chain $\{X_n\}$ in the stationary regime is $ \text{Cov}(G(\rvz^*,X_0), G(\rvz^*,X_k))$. By \citet[Theorem 6.3.7]{bremaud2013markov}, it is demonstrated that $\rmU$ is the sampling covariance of the base Markov chain $\rmP$ for the test function $G(\rvz^*,\cdot)$. Moreover, \citet[equation (6.34)]{bremaud2013markov} states that this sampling covariance $\rmU$ can be rewritten in the following form:
\begin{equation}\label{eqn:matrix_U_alt_form}
    \rmU = \sum_{i=1}^{N-1} \rmG(\rvz^*)^T \rvu_i\rvu_i\rmG(\rvz^*) = \sum_{i=1}^{N-1} \frac{1+\lambda_i}{1-\lambda_i} 
    \begin{bmatrix}
    \rmH^T \rvu_i\rvu_i^T \rmH&  \rmH^T \rvu_i\rvu_i^T    \\  \rvu_i\rvu_i^T \rmH&   \rvu_i\rvu_i^T 
    \end{bmatrix} \triangleq \begin{bmatrix}
        \rmU_{11} & \rmU_{12} \\ \rmU_{21} & \rmU_{22}
    \end{bmatrix} ,
\end{equation}
where $\{(\lambda_i,\rvu_i)\}_{i\in\gN}$ is the eigenpair of the transition kernel $\rmP$ of the ergodic and time-reversible base Markov chain.
This completes the proof of case 1.

\begin{remark}\label{remark:CLT_SRRW_iteration}
    For the CLT result \eqref{eqn:CLT_single_timescale2}, we can look further into the asymptotic covariance matrix $\rmV$ as in \eqref{eqn:Lyapunov equation}. For convenience, we denote $\rmV = \begin{bmatrix}
        \rmV_{11} & \rmV_{12} \\ \rmV_{21} & \rmV_{22}
    \end{bmatrix}$ and $\rmU$ in the form of \eqref{eqn:matrix_U_alt_form} such that 
    \begin{equation}\label{eqn:lyapunov_quation_augmented_V}
        \begin{bmatrix}
        \rmV_{11} & \rmV_{12} \\ \rmV_{21} & \rmV_{22}
    \end{bmatrix}\left(\frac{\mathds{1}_{\{b=1\}}}{2}\rmI + \rmJ(\alpha)^T\right) + \left(\frac{\mathds{1}_{\{b=1\}}}{2}\rmI + \rmJ(\alpha)\right)\begin{bmatrix}
        \rmV_{11} & \rmV_{12} \\ \rmV_{21} & \rmV_{22}
    \end{bmatrix} + \rmU = 0.
    \end{equation}
    For the SRRW iteration $\rvx_n$, from \eqref{eqn:CLT_single_timescale2} we know that $\gamma_n^{-1/2}(\rvx_n - \vmu) \xrightarrow[n \to \infty]{dist.} N(\vzero, \rmV_{4})$. Thus, in this remark, we want to obtain the closed form of $\rmV_{22}$. By algebraic computations of the bottom-right sub-block matrix, we have
    \begin{align*}
        &\left(2\alpha \boldsymbol{\mu}\vone^T \!-\! \alpha \rmP^T \!-\! \left(\alpha+1 \!-\! \frac{\mathds{1}_{\{a=1\}}}{2}\right)\rmI\right) \rmV_{22} \\
        &+ \rmV_{22}  \left(2\alpha \boldsymbol{\mu}\vone^T \!-\! \alpha \rmP^T \!-\! \left(\alpha+1 \!-\! \frac{\mathds{1}_{\{a=1\}}}{2}\right)\rmI\right)^T \\
        &+ \rmU_{22} = 0.
    \end{align*}
    By using result of the closed form solution to the Lyapunov equation (e.g., Lemma \ref{lemma:close_form_lyapunov_eq}) and the eigendecomposition of $\rmP$, we have
    \begin{equation}\label{eqn:asymptotic_covariance_matrix_vx}
        \rmV_{22} = \sum_{i=1}^{N-1} \frac{1}{2\alpha(1+\lambda_i) + 2 - \mathds{1}_{\{a = 1\}}} \cdot \frac{1+\lambda_i}{1-\lambda_i} \rvu_i\rvu_i^T.
    \end{equation}
\end{remark}

\subsection{Case (i): $\beta_n = o(\gamma_n)$}\label{appendix:proof_CLT_case2}
In this part, we mainly focus on the CLT of the SA iteration $\vtheta_n$ because the SRRW iteration $\rvx_n$ is independent of $\vtheta_n$ and its CLT result has been shown in Remark \ref{remark:CLT_SRRW_iteration}.

\subsubsection{Decomposition of SA-SRRW iteration \eqref{eqn:SRRW_SA_iterations}}
We slightly abuse the math notation and define the function 
$$\rvh(\vtheta,\rvx) \triangleq \E_{i \sim \vpi[\rvx]} H(\vtheta,i) = \sum_{i\in\gN} \pi_i[\rvx] H(\vtheta,i)$$ 
such that $\rvh(\vtheta, \vmu) \equiv \rvh(\vtheta)$. Then, we reformulate \eqref{eqn:TTSA_form_SA_SRRW} as
\begin{subequations}\label{eqn:SA_SRRW_decomposition1}
\begin{equation}
    \vtheta_{n+1} = \vtheta_n + \beta_{n+1} \rvh(\vtheta_n,\rvx_n) + \beta_{n+1}(H(\vtheta_n,X_{n+1}) - \rvh(\vtheta_n, \rvx_n)).
\end{equation}
\begin{equation}
    \rvx_{n+1} = \rvx_n + \gamma_{n+1} (\vpi[\rvx_n] - \rvx_n) + \gamma_{n+1}(\vdelta_{X_{n+1}}) - \vpi[\rvx_n]).
\end{equation}
\end{subequations}
There exist functions $q_{\rvx}: \gN \to \R^N, \Tilde{H}_{\vtheta,\rvx}:\gN \to \R^D$ satisfying the following Poisson equations
\begin{subequations}
\begin{equation}
    \vdelta_{i} - \vpi(\rvx) = q_{\rvx}(i) - (\rmK_{\rvx}q_{\rvx})(i)
\end{equation}
\begin{equation}
    H(\vtheta,i) - \rvh(\vtheta,\rvx) = \Tilde{H}_{\vtheta,\rvx}(i) - (\rmK_{\rvx}\Tilde{H}_{\vtheta,\rvx})(i),
\end{equation}    
\end{subequations}
for any $\vtheta \in \sR^D, \rvx \in \text{Int}(\Sigma)$ and $i \in \gN$, where $(\rmK_{\rvx}q_{\rvx})(i) \triangleq \sum_{j\in\gN} K_{ij}[\rvx]q_{\rvx}(j)$, $(\rmK_{\rvx}\Tilde{H}_{\vtheta,\rvx})(j) \triangleq \sum_{j\in\gN} K_{ij}[\rvx]\Tilde{H}_{\vtheta,\rvx}(j)$. The existence and explicit form of the solutions $q_{\rvx}, \Tilde{H}_{\vtheta,\rvx}$, which are continuous w.r.t $\rvx, \vtheta$, follow the similar steps that can be found in Section \ref{appendix:proof_a.s.convergence_case1} from \eqref{eqn:derivation_m1} to \eqref{eqn:function_m}. Thus, we can further decompose \eqref{eqn:SA_SRRW_decomposition1} into
\begin{subequations}\label{eqn:43}
\begin{equation}
\begin{split}
    \vtheta_{n+1} =& \vtheta_n + \beta_{n+1} \rvh(\vtheta_n,\rvx_n) + \beta_{n+1}\underbrace{(\Tilde{H}_{\vtheta_n,\rvx_n}(X_{n+1}) - (\rmK_{\rvx_n}\Tilde{H}_{\vtheta_n,\rvx_n})(X_n))}_{M_{n+1}^{(\vtheta)}}\\
    &+ \beta_{n+1}\underbrace{((\rmK_{\rvx_{n+1}}\Tilde{H}_{\vtheta_{n+1},\rvx_{n+1}})(X_{n+1}) - (\rmK_{\rvx_{n}}\Tilde{H}_{\vtheta_{n},\rvx_{n}})(X_{n+1}))}_{r^{(\vtheta,1)}_n}  \\
    &+ \beta_{n+1}\underbrace{((\rmK_{\rvx_n}\Tilde{H}_{\vtheta_n,\rvx_n})(X_n) - (\rmK_{\rvx_{n+1}}\Tilde{H}_{\vtheta_{n+1},\rvx_{n+1}})(X_{n+1}))}_{r^{(\vtheta,2)}_n},
\end{split}
\end{equation} 
\begin{equation}
    \begin{split}
        \rvx_{n+1} =& \rvx_n + \gamma_{n+1} (\vpi(\rvx_n) - \rvx_n) + \gamma_{n+1}\underbrace{(q_{\rvx_n}(X_{n+1}) - (\rmK_{\rvx_n}q_{\rvx_n})(X_n))}_{M_{n+1}^{(\rvx)}} \\
        &+ \gamma_{n+1}\underbrace{([\rmK_{\rvx_n}q_{\rvx_n}](X_{n+1}) - [\rmK_{\rvx_n}q_{\rvx_{n+1}}](X_{n+1}))}_{r^{(\rvx,1)}_n} \\
        &+ \gamma_{n+1}\underbrace{((\rmK_{\rvx_n}q_{\rvx_n})(X_n) - (\rmK_{\rvx_{n+1}}q_{\rvx_{n+1}})(X_{n+1}))}_{r^{(\rvx,2)}_n}.
    \end{split}
\end{equation}   
\end{subequations}
such that 
\begin{subequations}\label{eqn:TTSA_decomposed}
    \begin{equation}
        \vtheta_{n+1} = \vtheta_n + \beta_{n+1}\rvh(\vtheta_n,\rvx_n) +\beta_{n+1} M_{n+1}^{(\vtheta)} + \beta_{n+1} r^{(\vtheta,1)}_n + \beta_{n+1} r^{(\vtheta,2)}_n,
    \end{equation}
    \begin{equation}
        \rvx_{n+1} = \rvx_n + \gamma_{n+1} (\vpi(\rvx_n) - \rvx_n) + \gamma_{n+1}M_{n+1}^{(\rvx)} + \gamma_{n+1} r^{(\rvx,1)}_n + \gamma_{n+1} r^{(\rvx,2)}_n.
    \end{equation}
\end{subequations}
We can observe that \eqref{eqn:TTSA_decomposed} differs from the expression in \citet{konda2004convergence,mokkadem2006convergence}, which studied the two-timescale SA with Martingale difference noise. Here, due to the presence of the iterate-dependent Markovian noise and the application of the Poisson equation technique, we have additional non-vanishing terms $r^{(\vtheta,2)}_n, r^{(\rvx,2)}_n$, which will be further examined in Lemma \ref{lemma:r_n_property}. Additionally, when we apply the Poisson equation to the Martingale difference terms $M_{n+1}^{(\vtheta)}$, $M_{n+1}^{(\rvx)}$, we find that there are some covariances that are also non-vanishing as in Lemma \ref{lemma:M_n_property}. We will mention this again when we obtain those covariances. These extra non-zero noise terms make our analysis distinct from the previous ones since the key assumption (A4) in \citet{mokkadem2006convergence} is not satisfied. We demonstrate that the long-term average performance of these terms can be managed so that they do not affect the final CLT result.

\textbf{Analysis of Terms $M_{n+1}^{(\vtheta)},M_{n+1}^{(\rvx)}$} 

Consider the filtration $\gF_n \triangleq \sigma(\vtheta_0,\rvx_0,X_0, \cdots, \vtheta_n, \rvx_n, X_n)$, it is evident that $M_{n+1}^{(\vtheta)},M_{n+1}^{(\rvx)}$ are Martingale difference sequences adapted to $\gF_n$. Then, we have
\begin{equation}\label{eqn:covariance_vx}
\begin{split}
        &\E\left[\left.M^{(\rvx)}_{n+1}(M^{(\rvx)}_{n+1})^T\right|\gF_n\right] \\
        =&~ \E[q_{\rvx_n}(X_{n+1})q_{\rvx_n}(X_{n+1})^T|\gF_n] + (\rmK_{\rvx_n}q_{\rvx_n})(X_n) \left((\rmK_{\rvx_n}q_{\rvx_n})(X_n)\right)^T \\
        &- \E[q_{\rvx_n}(X_{n+1})|\gF_n] \left(\rmK_{\rvx_n}q_{\rvx_n})(X_n)\right)^T - (\rmK_{\rvx_n}q_{\rvx_n})(X_n) \E[q_{\rvx_n}(X_{n+1})^T|\gF_n] \\
        =&~\E[q_{\rvx_n}(X_{n+1})q_{\rvx_n}(X_{n+1})^T|\gF_n] -  (\rmK_{\rvx_n}q_{\rvx_n})(X_n) \left((\rmK_{\rvx_n}q_{\rvx_n})(X_n)\right)^T.
\end{split}
\end{equation}
Similarly, we have
\begin{equation}\label{eqn:covariance_vtheta}
\begin{split}
    &\E\left[\left.M^{(\vtheta)}_{n+1}(M^{(\vtheta)}_{n+1})^T\right|\gF_n\right] \\
    =& ~ \E[\Tilde{H}_{\vtheta_n,\rvx_n}(X_{n+1})\Tilde{H}_{\vtheta_n,\rvx_n}(X_{n+1})^T|\gF_n] -  (\rmK_{\rvx_n}\Tilde{H}_{\vtheta_n,\rvx_n})(X_n) \left((\rmK_{\rvx_n}\Tilde{H}_{\vtheta_n,\rvx_n})(X_n)\right)^T,
\end{split}
\end{equation}
and
\begin{equation*}
\begin{split}
    &\E\left[\left.M^{(\rvx)}_{n+1}(M^{(\vtheta)}_{n+1})^T\right|\gF_n\right] \\
    =& ~\E[q_{\rvx_n}(X_{n+1})\Tilde{H}_{\vtheta_n,\rvx_n}(X_{n+1})^T|\gF_n] -   (\rmK_{\rvx_n}q_{\rvx_n})(X_n) \left((\rmK_{\rvx_n}\Tilde{H}_{\vtheta_n,\rvx_n})(X_n)\right)^T.
\end{split}
\end{equation*}

We now focus on $\E\left[\left.M^{(\rvx)}_{n+1}(M^{(\rvx)}_{n+1})^T\right|\gF_n\right]$. Denote by 
\begin{equation}\label{eqn:def_Gi}
    V_1(\rvx,i) \triangleq \sum_{j\in\gN}\rmK_{i,j}[\rvx] q_{\rvx}(j)q_{\rvx}(j)^T -  (\rmK_{\rvx} q_{\rvx})(i) \left((\rmK_{\rvx} q_{\rvx})(i)\right)^T,
\end{equation}
and let its expectation w.r.t the stationary distribution $\vpi(\rvx)$ be $v_1(\rvx) \triangleq \E_{i\sim \vpi(\rvx)}[V_1(\rvx,i)]$,
we can construct \textit{another Poisson equation}, i.e.,
\begin{equation*}
\begin{split}
    &\E\left[\left.M^{(\rvx)}_{n+1}(M^{(\rvx)}_{n+1})^T\right|\gF_n\right] - \sum_{X_n \in \gN}\pi_{X_n}(\rvx_n)\E\left[\left.M^{(\rvx)}_{n+1}(M^{(\rvx)}_{n+1})^T\right|\gF_n\right] \\ 
    =&~ V_1(\rvx_n,X_{n+1}) - v_1(\rvx_n) \\
    =&~ \varphi^{(1)}_{\rvx}(X_{n+1}) - (\rmK_{\rvx_n} \varphi^{(1)}_{\rvx_n})(X_{n+1}),
\end{split}
\end{equation*}
for some matrix-valued function $\varphi^{(1)}_{\rvx}: \gN \to \R^{N \times N}$. Since $q_{\rvx}$ and $\rmK[\rvx]$ are continuous in $\rvx$, functions $V_1, v_1$ are also continuous in $\rvx$.
Then, we can decompose \eqref{eqn:def_Gi} into
\begin{equation}\label{eqn:V_1}
\begin{split}
   V_1(\rvx_n,X_{n+1}) =& \underbrace{v_1(\vmu)}_{\rmU_{22}} + \underbrace{v_1(\rvx_n) - v_1(\vmu)}_{\rmD^{(1)}_{n}} + \underbrace{\varphi^{(1)}_{\rvx_n}(X_{n+1}) - (\rmK_{\rvx_n}\varphi^{(1)}_{\rvx_n})(X_{n})}_{\rmJ^{(1,a)}_{n}} \\
   &+ \underbrace{(\rmK_{\rvx_n}\varphi^{(1)}_{\rvx_n})(X_{n}) - (\rmK_{\rvx_n}\varphi^{(1)}_{\rvx_n})(X_{n+1})}_{\rmJ^{(1,b)}_{n}}.
\end{split}
\end{equation}
Thus, we have
\begin{equation}\label{eqn:55}
    \E[M^{(\rvx)}_{n+1} (M^{(\rvx)}_{n+1})^T|\gF_n] = \rmU_{22} + \rmD_n^{(1)} + \rmJ_n^{(1)},
\end{equation}
where $\rmJ_n^{(1)} = \rmJ_n^{(1,a)} + \rmJ_n^{(1,b)}$.

Following the similar steps above, we can decompose $\E\left[\left.M^{(\rvx)}_{n+1}(M^{(\vtheta)}_{n+1})^T\right|\gF_n\right]$ and $\E\left[\left.M^{(\vtheta)}_{n+1}(M^{(\vtheta)}_{n+1})^T\right|\gF_n\right]$ as
\begin{subequations}\label{eqn:56}
\begin{equation}
    \E\left[\left.M^{(\rvx)}_{n+1}(M^{(\vtheta)}_{n+1})^T\right|\gF_n\right] = \rmU_{21} + \rmD_n^{(2)} + \rmJ_n^{(2)},
\end{equation}
\begin{equation}\label{eqn:M_n_vtheta}
    \E\left[\left.M^{(\vtheta)}_{n+1}(M^{(\vtheta)}_{n+1})^T\right|\gF_n\right] = \rmU_{11} + \rmD_n^{(3)} + \rmJ_n^{(3)}.
\end{equation}
\end{subequations}
where $\rmJ_n^{(2)} = \rmJ_n^{(2,a)} + \rmJ_n^{(2,b)}$ and $\rmJ_n^{(3)} = \rmJ_n^{(3,a)} + \rmJ_n^{(3,b)}$. Here, we note that matrices $\rmJ_{n}^{i}$ for $i = 1,2,3$ are in presence of the current CLT analysis of the two-timescale SA with Martingale difference noise. In addition, $\rmU_{11}$, $\rmU_{12}$ and $\rmU_{22}$ inherently include the information of the underlying Markov chain (with its eigenpair ($\lambda_i, \rvu_i$)), which is an extension of the previous works \citep{konda2004convergence,mokkadem2006convergence}.

\begin{lemma}\label{lemma:M_n_property}
    For $M_{n+1}^{(\vtheta)},M_{n+1}^{(\rvx)}$ defined in \eqref{eqn:43} and their decomposition in \eqref{eqn:55} and \eqref{eqn:56}, we have
\begin{subequations}
\begin{equation}\label{eqn:matrix_Gamma}
    \rmU_{11} = \sum_{i=1}^{N-1} \frac{1+\lambda_i}{1-\lambda_i} \rvu_i\rvu_i^T, \quad \rmU_{21} = \sum_{i=1}^{N-1} \frac{1+\lambda_i}{1-\lambda_i} \rvu_i\rvu_i^T\rmH, \quad \rmU_{11} = \sum_{i=1}^{N-1} \frac{1+\lambda_i}{1-\lambda_i} \rmH^T\rvu_i\rvu_i^T\rmH,
\end{equation}
\begin{equation}
    \lim_{n\to\infty} \rmD_n^{(i)} = 0 ~~ \text{a.s.} ~~~~ \text{for}~~~~ i = 1, 2, 3, 
\end{equation}
\begin{equation}
\lim_{n\to\infty} \gamma_n \E\left[\left\|\sum_{k=1}^n \rmJ^{(i)}_k\right\|\right] = 0, \quad \text{for} ~~~~ i = 1,2,3.
\end{equation}
\end{subequations}
\end{lemma}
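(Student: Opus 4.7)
The plan is to handle the three claims in turn, leveraging the Poisson decomposition already in place together with the almost sure convergence $(\vtheta_n,\rvx_n)\to(\vtheta^*,\vmu)$ furnished by Lemma \ref{lemma:a.s.convergence_SRRW} and Theorem \ref{lemma:a.s.convergence_SA}. For the closed forms of $\rmU_{11},\rmU_{21},\rmU_{22}$, I would note that at $\rvx=\vmu$ the SRRW kernel collapses to the base chain, $\rmK[\vmu]=\rmP$ and $\vpi[\vmu]=\vmu$, so that the Poisson equations defining $q_{\vmu}$ and $\Tilde{H}_{\vtheta^*,\vmu}$ reduce to the usual Poisson equations against the stationary base chain with centered test functions $\vdelta_{\cdot}-\vmu$ and $\rmH^T(\vdelta_{\cdot}-\vmu)$. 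Via the Dynkin martingale identity, the stationary expectation $v_1(\vmu)$ and its $\Tilde H$-cross and $\Tilde H$-quadratic analogues equal the Markov-chain asymptotic (co)variances of these test functions under $\rmP$. These are precisely the quantities computed in case (ii) via the spectral identity of \citet[Theorem 6.3.7]{bremaud2013markov} and assembled blockwise into $\rmU$ in \eqref{eqn:matrix_U_alt_form}, so reading off the three blocks yields the claimed formulas.

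For the second claim, each $\rmD_n^{(i)}$ is by construction a continuous image of $(\vtheta_n,\rvx_n)$ minus its value at $(\vtheta^*,\vmu)$. Continuity of the maps $v_i$ on $\sR^D\times\text{Int}(\Sigma)$ follows from the explicit resolvent representation of the Poisson solutions $q_\rvx$ and $\Tilde H_{\vtheta,\rvx}$ derived as in \eqref{eqn:function_m}, involving $(\rmI-\rmK[\rvx]+\vone\vpi[\rvx]^T)^{-1}$, which is continuous in $\rvx$ on $\text{Int}(\Sigma)$ since $\rmK[\rvx]$ and $\vpi[\rvx]$ are. The continuous mapping theorem then delivers $\rmD_n^{(i)}\to\vzero$ almost surely.

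The third claim is the crux, and I would treat the two components of $\rmJ_n^{(i)}=\rmJ_n^{(i,a)}+\rmJ_n^{(i,b)}$ from \eqref{eqn:V_1} separately. The piece $\rmJ_n^{(i,a)}$ is a $\gF_n$-martingale difference with increments uniformly bounded along the trajectory (Assumption \ref{assump:4} and continuity of $\varphi^{(i)}_\rvx$), so martingale orthogonality together with Jensen's inequality gives $\E\|\sum_{k=1}^n\rmJ_k^{(i,a)}\|\le(\sum_{k=1}^n\E\|\rmJ_k^{(i,a)}\|^2)^{1/2}=O(\sqrt n)$, and multiplying by $\gamma_n=(n+1)^{-a}$ with $a>1/2$ yields $O(n^{1/2-a})\to 0$. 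For $\rmJ_n^{(i,b)}$, Abel summation with $\Psi_k\triangleq(\rmK_{\rvx_k}\varphi^{(i)}_{\rvx_k})(X_k)$ rearranges the partial sum as
\begin{equation*}
\sum_{k=1}^n\rmJ_k^{(i,b)}=\Psi_1-\Psi_{n+1}+\sum_{k=1}^n\bigl[(\rmK_{\rvx_{k+1}}\varphi^{(i)}_{\rvx_{k+1}})(X_{k+1})-(\rmK_{\rvx_k}\varphi^{(i)}_{\rvx_k})(X_{k+1})\bigr],
\end{equation*}
and local Lipschitz continuity of $\rvx\mapsto\rmK_\rvx\varphi^{(i)}_\rvx$ on the compact containing $\{\rvx_n\}$ bounds each summand by a constant times $\|\rvx_{k+1}-\rvx_k\|=O(\gamma_{k+1})$. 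Hence $\E\|\sum_{k=1}^n\rmJ_k^{(i,b)}\|=O(1)+O(\sum_{k=1}^n\gamma_k)$, which after multiplication by $\gamma_n$ is $O(\gamma_n)+O(n^{1-2a})$ (with an extra $\log n$ when $a=1$), both vanishing under Assumption \ref{assump:3}.

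The main obstacle is the non-vanishing ``controlled Markov noise'' term $\rmJ_n^{(i,b)}$, which is absent from the classical i.i.d./martingale-difference two-timescale CLT of \citet{konda2004convergence,mokkadem2006convergence} and forces one to exploit the regularity of the Poisson solutions in $\rvx$. This is precisely where the lower bound $a>1/2$ in Assumption \ref{assump:3} is indispensable, being exactly what makes $n^{1-2a}\to 0$; the uniform local Lipschitz estimate along the sample path is in turn ensured by the compact-containment provision of Assumption \ref{assump:4}.
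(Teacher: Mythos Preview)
Your proposal is correct and follows essentially the same route as the paper: identify the $\rmU$-blocks via the spectral asymptotic-variance formula for the base chain at $\rvx=\vmu$, use continuity of the $v_i$ for $\rmD_n^{(i)}\to 0$, and split $\rmJ_n^{(i)}$ into a martingale-difference part (the paper uses Burkholder where you use $L^2$-orthogonality plus Jensen---your version is marginally simpler and equally valid) and a telescoping part controlled by Abel summation and local Lipschitz continuity. One small omission: for $i=2,3$ the map $\varphi^{(i)}$ also depends on $\vtheta_n$ through $\Tilde H_{\vtheta,\rvx}$, so the increment in your Abel bound should be $O(\|\vtheta_{k+1}-\vtheta_k\|+\|\rvx_{k+1}-\rvx_k\|)=O(\beta_{k+1}+\gamma_{k+1})$; in the present case (i) one has $\beta_n=o(\gamma_n)$, so this is still $O(\gamma_{k+1})$ and your conclusion is unaffected.
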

\begin{proof}
We now provide the properties of the four terms inside \eqref{eqn:55} as an example. Note that 
\begin{equation*}
\begin{split}
    \rmU_{11} =& ~\E_{i\sim \vmu}[V_1(\vmu,i)] = \sum_{i\in\gN} \mu_i  \left[\sum_{j\in\gN}\rmP(i,j) q_{\vmu}(j)q_{\vmu}(j)^T -  (\rmP q_{\vmu})(i) \left((\rmP q_{\vmu})(i)\right)^T\right] \\
    =& ~ \sum_{j\in\gN} \mu_j  q_{\vmu}(j)q_{\vmu}(j)^T -  (\rmP q_{\vmu})(j) \left((\rmP q_{\vmu})(j)\right)^T.
\end{split}
\end{equation*}
We can see that it has exactly the same structure as matrix $\mU$ in \eqref{eqn:Lyapunov equation}. Following the similar steps in deducing the explicit form of $\mU$ from \eqref{eqn:34} to \eqref{eqn:matrix_U_alt_form}, we get
\begin{equation}\label{eqn:Gamma_1}
    \rmU_{11} = \sum_{i=1}^{N-1} \frac{1+\lambda_i}{1-\lambda_i} \rvu_i\rvu_i^T.
\end{equation}
By the almost sure convergence result $\rvx_n \to \vmu$ in Lemma \ref{lemma:a.s.convergence_SRRW}, $v_1(\rvx_n) \to v_1(\vmu)$ a.s. such that $\lim_{n\to\infty} \rmD_n^{(1)} = 0$ a.s.

We next prove that $ \lim_{n\to\infty} \gamma_n \E\left[\left\|\sum_{k=1}^n \rmJ^{(1,a)}_k\right\|\right] = 0$ and $ \lim_{n\to\infty} \gamma_n \E\left[\left\|\sum_{k=1}^n \rmJ^{(1,b)}_k\right\|\right] = 0$.

Since $\{\rmJ^{(1,a)}_n\}$ is a Martingale difference sequence adapted to $\gF_n$, with the Burkholder inequality in Lemma \ref{lemma:Burkholder Inequality} and $p = 1$, we show that 
\begin{equation}\label{eqn:29}
    \E\left[\left\|\sum_{k=1}^n \rmJ_k^{(1,a)}\right\|\right] \leq C_1 \E\left[\sqrt{\left(\sum_{k=1}^n \left\|\rmJ_k^{(1,a)}\right\|^2\right)}\right].
\end{equation}
By assumption \ref{assump:4}, $\rvx_n$ is always within some compact set $\Omega$ such that $\sup_n \|\rmJ_n^{(1,a)}\| \leq C_{\Omega} < \infty$ and for a given trajectory $\omega$ of $\rvx_n(\omega)$,
\begin{equation}\label{eqn:J_k_1_a}
    \gamma_n C_p \sqrt{\left(\sum_{k=1}^n \left\|\rmJ_k^{(1,a)}\right\|^2\right)} \leq  C_p C_{\Omega} \gamma_n \sqrt{n},
\end{equation}
and the last term decreases to zero in $n$ since $a > 1/2$.

For $\rmJ_n^{(1,b)}$, we use Abel transformation and obtain
\begin{equation*}
\begin{split}
    \sum_{k=1}^{n} \rmJ_k^{(1,b)} =& \sum_{k=1}^n ((\rmK_{\rvx_k} \varphi^{(1)}_{\rvx_k})(X_{k-1}) - (\rmK_{\rvx_{k-1}} \varphi^{(1)}_{\rvx_{k-1}})(X_{k-1})) \\
    &+ (\rmK_{\rvx_0} \varphi^{(1)}_{\rvx_0})(X_{0}) - (\rmK_{\rvx_n} \varphi^{(1)}_{\rvx_n})(X_{n}).
\end{split}
\end{equation*}
Since $(\rmK_{\rvx} \varphi^{(1)}_{\rvx})(X)$ is continuous in $\rvx$, for $\rvx_n$ within a compact set $\Omega$ (assumption \ref{assump:4}), it is local Lipschitz with a constant $L_{\Omega}$ such that 
\begin{equation*}
    \|(\rmK_{\rvx_k} \varphi^{(1)}_{\rvx_k})(X_{k-1}) - \rmK_{\rvx_{k-1}} \varphi^{(1)}_{\rvx_{k-1}})(X_{k-1})\| \leq L_{\Omega} \| \rvx_k - \rvx_{k-1}\| \leq 2L_{\Omega} \gamma_k.
\end{equation*}
where the last inequality arises from \eqref{eqn:SRRW_iteration}, i.e., $\rvx_k - \rvx_{k-1} = \gamma_k (\vdelta_{X_k} - \rvx_{k-1})$ and $\| \vdelta_{X_k} - \rvx_{k-1} \| \leq 2$ because $\rvx_n \in \text{Int}(\Sigma)$.
Also, $\|(\rmK_{\rvx_0} \varphi^{(1)}_{\rvx_0})(X_{0})\|+\|(\rmK_{\rvx_n} \varphi^{(1)}_{\rvx_n})(X_{n})\|$ are upper-bounded by some positive constant $C_{\Omega}'$. This implies that
\begin{equation*}
   \left\|\sum_{k=1}^n \rmJ_k^{(1,b)}\right\| \leq C_{\Omega}'+2L_{\Omega}\sum_{k=1}^n\gamma_k.
\end{equation*}
Note that
\begin{equation}\label{eqn:J_k_1_b}
   \gamma_n\left\|\sum_{k=1}^n \rmJ_k^{(1,b)}\right\| \leq  \gamma_nC_{\Omega}'+2L_{\Omega}\gamma_n\sum_{k=1}^n\gamma_k \leq  \gamma_nC_{\Omega}'+\frac{2L_{\Omega}}{a} n^{1-2a},
\end{equation}
where the last inequality is from $\sum_{k=1}^n \gamma_k < \frac{1}{a} n^{1-a}$. We observe that the last term in \eqref{eqn:J_k_1_b} is decreasing to zero in $n$ because $a > 1/2$.

Note that $\rmJ_k^{(1)} = \rmJ_k^{(1,a)} + \rmJ_k^{(1,b)}$, by triangular inequality we have
\begin{equation}\label{eqn:32}
\begin{split}
\gamma_n\E\left[\left\|\sum_{k=1}^n \rmJ_k^{(11)}\right\|\right] &\leq \gamma_n\E\left[\left\|\sum_{k=1}^n \rmJ_k^{(11,A)}\right\|\right] + \gamma_n\E\left[\left\|\sum_{k=1}^n \rmJ_k^{(11,B)}\right\|\right] \\
&\leq \gamma_n C_1 \E\left[\sqrt{\left(\sum_{k=1}^n \left\|\rmJ_k^{(11,A)}\right\|^2\right)}\right] + \gamma_n \E\left[\left\|\sum_{k=1}^n \rmJ_k^{(11,B)}\right\|\right] \\
&= \E\left[\gamma_n C_1\sqrt{\left(\sum_{k=1}^n \left\|\rmJ_k^{(11,A)}\right\|^2\right)} + \gamma_n\left\|\sum_{k=1}^n \rmJ_k^{(11,B)}\right\|\right],
\end{split}
\end{equation}
where the second inequality comes from \eqref{eqn:29}. By \eqref{eqn:J_k_1_a} and \eqref{eqn:J_k_1_b} we know that both terms in the last line of \eqref{eqn:32} are uniformly bounded by constants over time $n$ that depend on the set $\Omega$. Therefore, by dominated convergence theorem, taking the limit over the last line of \eqref{eqn:32} gives
\begin{equation*}
\begin{split}
&\lim_{n\to\infty} \E\left[\gamma_n C_1\sqrt{\left(\sum_{k=1}^n \left\|\rmJ_k^{(11,A)}\right\|^2\right)} \!+\! \gamma_n\left\|\sum_{k=1}^n \rmJ_k^{(11,B)}\right\|\right] \\
=& ~ \E\left[\lim_{n\to\infty} \gamma_n C_1\sqrt{\left(\sum_{k=1}^n \left\|\rmJ_k^{(11,A)}\right\|^2\right)} \!+\! \gamma_n\left\|\sum_{k=1}^n \rmJ_k^{(11,B)}\right\|\right] \!=\! 0.
\end{split}
\end{equation*}
Therefore, we have 
\begin{equation*}
    \lim_{n\to\infty} \gamma_n \E\left[\left\|\sum_{k=1}^n \rmJ_k^{(1)}\right\|\right] = 0,
\end{equation*}

In sum, in terms of $\E[M^{(\rvx)}_{n+1} (M^{(\rvx)}_{n+1})^T|\gF_n]$ in \eqref{eqn:55}, we have $\rmU_{11}$ in \eqref{eqn:Gamma_1}, $\lim_{n\to\infty} \rmD_n^{(1)} = 0$ a.s. and $ \lim_{n\to\infty} \gamma_n \E\left[\left\|\sum_{k=1}^n \rmJ^{(1)}_k\right\|\right] = 0$.

We can apply the same steps as above for the other two terms $i = 2, 3$ in \eqref{eqn:56} and obtain the results.
\end{proof}

\textbf{Analysis of Terms $r^{(\vtheta,1)}_n,r^{(\vtheta,2)}_n, r^{(\rvx,1)}_n, r^{(\rvx,2)}_n$} 
\begin{lemma}\label{lemma:r_n_property}
    For $r^{(\vtheta,1)}_n,r^{(\vtheta,2)}_n, r^{(\rvx,1)}_n, r^{(\rvx,2)}_n$ defined in \eqref{eqn:43}, we have the following results:
\begin{subequations}
    \begin{equation}
        \|r^{(\vtheta,1)}_n\| = O(\gamma_n) = o(\sqrt{\beta_n}), \quad \sqrt{\gamma_n}\left\|\sum_{k=1}^n r^{(\vtheta,2)}_k\right\| = O(\sqrt{\gamma_n}) = o(1).
    \end{equation}
    \begin{equation}
        \|r^{(\rvx,1)}_n\| = O(\gamma_n) = o(\sqrt{\beta_n}), \quad \sqrt{\gamma_n}\left\|\sum_{k=1}^n r^{(\rvx,2)}_k\right\| = O(\sqrt{\gamma_n}) = o(1).        
    \end{equation}
\end{subequations}
\end{lemma}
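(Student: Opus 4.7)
\textbf{Proof proposal for Lemma \ref{lemma:r_n_property}.} The plan is to treat the two pairs of remainder terms by different mechanisms: the $r_n^{(\cdot,1)}$ terms via one-step Lipschitz control on the Poisson-equation solutions, and the $r_n^{(\cdot,2)}$ terms via their telescoping structure. The compactness afforded by Assumption \ref{assump:4} will do most of the heavy lifting; the only subtle point is continuity of the maps $\rvx \mapsto \rmK_{\rvx} q_{\rvx}$ and $(\vtheta,\rvx) \mapsto \rmK_{\rvx}\tilde H_{\vtheta,\rvx}$, which I will import from the explicit form of the Poisson-equation solution already established in Appendix \ref{appendix:proof_a.s.convergence_case1} (cf.\ equation \eqref{eqn:function_m}).

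First I would handle $r_n^{(\vtheta,1)}$ and $r_n^{(\rvx,1)}$. Write $\Phi(\vtheta,\rvx,i) \triangleq (\rmK_{\rvx}\tilde H_{\vtheta,\rvx})(i)$ and $\Psi(\rvx,i) \triangleq (\rmK_{\rvx}q_{\rvx})(i)$. Because $\rmK[\rvx]$ depends smoothly on $\rvx$ on $\mathrm{Int}(\Sigma)$ and the Poisson-equation solutions $q_{\rvx}$, $\tilde H_{\vtheta,\rvx}$ inherit continuous dependence from the closed-form expression analogous to \eqref{eqn:function_m} (namely $(\rmI - \rmK[\rvx] + \vone\vpi[\rvx]^T)^{-1}$ times the centered driving function), both $\Phi$ and $\Psi$ are locally Lipschitz on any compact subset of $\sR^D \times \mathrm{Int}(\Sigma) \times \gN$. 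By Assumption \ref{assump:4} the trajectory $(\vtheta_n,\rvx_n)$ lies almost surely in such a compact set, so there is a Lipschitz constant $L_\Omega$ with
\begin{equation*}
\|r_n^{(\vtheta,1)}\| \leq L_\Omega\bigl(\|\vtheta_{n+1} - \vtheta_n\| + \|\rvx_{n+1} - \rvx_n\|\bigr),\qquad \|r_n^{(\rvx,1)}\| \leq L_\Omega \|\rvx_{n+1} - \rvx_n\|.
\end{equation*}
From \eqref{eqn:SRRW_iteration} we have $\|\rvx_{n+1} - \rvx_n\| \leq 2\gamma_{n+1}$, and from \eqref{eqn:SA_iteration} together with Assumption \ref{assump:1} and the boundedness of $\vtheta_n$ we have $\|\vtheta_{n+1} - \vtheta_n\| = O(\beta_{n+1})$. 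In case (i), $\beta_n = o(\gamma_n)$, so both bounds collapse to $O(\gamma_n)$. The check that $\gamma_n = o(\sqrt{\beta_n})$ in this regime is just $a > b/2$, which follows from $a > 1/2 \geq b/2$ under Assumption \ref{assump:3}, giving $\|r_n^{(\vtheta,1)}\| = O(\gamma_n) = o(\sqrt{\beta_n})$ and similarly for $r_n^{(\rvx,1)}$.

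For $r_n^{(\vtheta,2)}$ and $r_n^{(\rvx,2)}$ I would exploit the fact that each is a pure telescoping increment: writing $\Phi_n \triangleq \Phi(\vtheta_n,\rvx_n,X_n)$ and $\Psi_n \triangleq \Psi(\rvx_n,X_n)$, we have $r_n^{(\vtheta,2)} = \Phi_n - \Phi_{n+1}$ and $r_n^{(\rvx,2)} = \Psi_n - \Psi_{n+1}$, so that
\begin{equation*}
\sum_{k=1}^n r_k^{(\vtheta,2)} = \Phi_1 - \Phi_{n+1}, \qquad \sum_{k=1}^n r_k^{(\rvx,2)} = \Psi_1 - \Psi_{n+1}.
\end{equation*}
Since $(\vtheta_n,\rvx_n,X_n)$ stays in a compact set almost surely and $\Phi,\Psi$ are continuous there, both partial sums are uniformly bounded in $n$, hence $O(1)$. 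Multiplying by $\sqrt{\gamma_n}$ gives the claimed $O(\sqrt{\gamma_n}) = o(1)$ rate.

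The only step that requires genuine care, and which I consider the main obstacle, is the local-Lipschitz property of the Poisson-equation solutions $q_{\rvx}$ and $\tilde H_{\vtheta,\rvx}$ as functions of the parameters $(\vtheta,\rvx)$. I would reduce this to the analogous statement already verified in Section \ref{appendix:proof_a.s.convergence_case1}: the solution map has the explicit representation $(\rmI - \rmK[\rvx] + \vone\vpi[\rvx]^T)^{-1}$ applied to a centered continuous function, and the inverse is well-defined and continuously differentiable in $\rvx$ on $\mathrm{Int}(\Sigma)$ because $\rmK[\rvx]$ is ergodic there (so the resolvent in the Poisson sense exists). Combined with the linear-growth/continuity of $H(\cdot,i)$ from Assumption \ref{assump:1}, one obtains the local Lipschitz constant $L_\Omega$ used above, which closes the argument for all four remainder terms.
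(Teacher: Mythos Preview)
Your proposal is correct and follows essentially the same approach as the paper: local Lipschitz control of the Poisson-equation solutions (via the explicit resolvent form from \eqref{eqn:function_m}) combined with one-step increment bounds $\|\rvx_{n+1}-\rvx_n\|\le 2\gamma_{n+1}$ and $\|\vtheta_{n+1}-\vtheta_n\|=O(\beta_{n+1})$ for the $r_n^{(\cdot,1)}$ terms, and the telescoping identity $\sum_{k=1}^n r_k^{(\cdot,2)}=\nu_1-\nu_{n+1}$ together with uniform boundedness from Assumption~\ref{assump:4} for the $r_n^{(\cdot,2)}$ terms. The paper's proof is exactly this, with the same step-size check $a>1/2\geq b/2$ to conclude $O(\gamma_n)=o(\sqrt{\beta_n})$.
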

\begin{proof}
For $r^{(\vtheta,1)}_n$, note that
\begin{equation}\label{eqn:106}
\begin{split}
   r^{(\vtheta,1)}_n = &~(\rmK_{\rvx_{n+1}}\Tilde{H}_{\vtheta_{n+1},\rvx_{n+1}})(X_{n+1}) - (\rmK_{\rvx_n}\Tilde{H}_{\vtheta_{n},\rvx_{n}})(X_{n+1}) \\
    =&~ \sum_{j\in\gN}\left( \rmK_{X_n,j}[\rvx_{n+1}] \Tilde{H}_{\vtheta_{n+1},\rvx_{n+1}}(j) - \rmK_{X_n,j}[\rvx_{n}] \Tilde{H}_{\vtheta_{n},\rvx_{n}}(j)\right) \\
    \leq& ~ \sum_{j\in\gN} L_{\gC} (\|\vtheta_{n+1} - \vtheta_n\| + \|\rvx_{n+1} - \rvx_n\|) \\
    \leq& NL_{\gC}(C_{\gC} \beta_{n+1}  + 2\gamma_{n+1})
\end{split}
\end{equation}
where the second last inequality is because $\rmK_{i,j}[\rvx] \Tilde{H}_{\vtheta,\rvx}(j)$ is continuous in $\vtheta, \rvx$ $\rmK[\vx]$, which stems from continuous functions $\mK[\rvx]$ and $\Tilde{H}_{\vtheta,\rvx}$. The last inequality is from update rules \eqref{eqn:SRRW_SA_iterations} and $(\vtheta_n,\rvx_n) \in \Omega$ for some compact subset $\Omega$ by assumption \ref{assump:4}. Then, we have $\|r^{(\vtheta,1)}_n\| = O(\gamma_n) = o(\sqrt{\beta_n})$ because of $a > 1/2 \geq b/2$ by assumption \ref{assump:3}.

We let $\nu_n \triangleq (\rmK_{\rvx_n}\Tilde{H}_{\vtheta_n,\rvx_n})(X_n)$ such that $r^{(\vtheta,2)}_n = \nu_n - \nu_{n+1}$. Note that $\sum_{k=1}^n r^{(\vtheta,2)}_k = \nu_1 - \nu_{n+1}$, and by assumption \ref{assump:4}, $\|\nu_n\|$ is upper bounded by a constant dependent on the compact set, which leads to
\begin{equation*}
    \sqrt{\gamma_n}\left\|\sum_{k=1}^n r^{(\vtheta,2)}_k\right\| = \sqrt{\gamma_n}\|\nu_1 - \nu_{n+1}\|  = O(\sqrt{\gamma_n}) = o(1).
\end{equation*}

Similarly, we can also obtain $\|r^{(\rvx,1)}_n\| = o(\sqrt{\beta_n})$ and $\sqrt{\gamma_n}\left\|\sum_{k=1}^n r^{(\rvx,2)}_k\right\| = O(\sqrt{\gamma_n}) = o(1)$.
\end{proof}

\subsubsection{Effect of SRRW Iteration on SA Iteration}
In view of the almost sure convergence results in Lemma \ref{lemma:a.s.convergence_SRRW} and Lemma \ref{lemma:a.s.convergence_SA}, for large enough $n$ so that both iterations $\vtheta_n, \rvx_n$ are close to the equilibrium $(\vtheta^*, \vmu)$, we can apply the Taylor expansion to functions $\rvh(\vtheta,\rvx)$ and $\vpi[\rvx] - \rvx$ in \eqref{eqn:TTSA_decomposed} at the point $(\vtheta^*, \vmu)$, which results in
\begin{subequations}
    \begin{equation}
        \rvh(\vtheta,\rvx) = \rvh(\vtheta^*, \vmu) + \nabla_{\vtheta}\rvh(\vtheta^*,\vmu) (\vtheta - \vtheta^*) + \nabla_{\rvx}\rvh(\vtheta^*,\vmu)(\rvx - \vmu) + O(\|\vtheta - \vtheta^*\|^2 + \|\rvx - \vmu\|^2),
    \end{equation}
    \begin{equation}
        \vpi[\rvx] - \rvx = \vpi[\vmu] - \vmu + \nabla_{\rvx}(\vpi(\rvx)-\rvx)|_{\rvx = \vmu}(\rvx - \vmu) + O(\|\rvx - \vmu\|^2).
    \end{equation}
\end{subequations}
With matrix $\rmJ(\alpha)$, we have the following:
\begin{equation}\label{eqn:definition_matrix_Q}
\begin{split}
&\rmJ_{11} = \nabla_{\vtheta}\rvh(\vtheta^*,\vmu) = \nabla \rvh(\vtheta^*), \\ &\rmJ_{12}(\alpha) = \nabla_{\rvx}\rvh(\vtheta^*,\vmu) = -\alpha \rmH^T(\rmP^T + \rmI), \\ 
&\rmJ_{22}(\alpha) = \nabla_{\rvx}(\vpi(\rvx)-\rvx)|_{\rvx = \vmu} = 2\alpha \boldsymbol{\mu}\vone^T - \alpha \rmP^T - (\alpha+1)\rmI.
\end{split}
\end{equation}
Then, \eqref{eqn:TTSA_decomposed} becomes
\begin{subequations}\label{eqn:two_timescale_SA_2}
    \begin{equation}\label{eqn:eqn:two_timescale_SA_2b}
        \vtheta_{n+1} = \vtheta_n + \beta_{n+1}(\rmJ_{11} (\vtheta_n - \vtheta^*) + \rmJ_{12}(\alpha) (\rvx_n - \vmu) + r^{(\vtheta,1)}_n + r^{(\vtheta,2)}_n + M^{(\vtheta)}_{n+1} + \eta_n^{(\vtheta)}),
    \end{equation}
    \begin{equation}\label{eqn:eqn:two_timescale_SA_2a}
        \rvx_{n+1} = \rvx_n + \gamma_{n+1}(\rmJ_{22}(\alpha) (\rvx_n - \vmu) + r^{(\rvx,1)}_n + r^{(\rvx,2)}_n + M^{(\rvx)}_{n+1} + \eta_n^{(\rvx)}),
    \end{equation}
\end{subequations}
where $\eta_n^{(\vtheta)} = O(\|\rvx_n\|^2 + \|\vtheta_n\|^2)$ and $\eta_n^{(\rvx)} = O(\|\rvx_n\|^2)$. 

Then, inspired by \citet{mokkadem2006convergence}, we decompose iterates $\{\rvx_n\}$ and $\{\vtheta_n\}$ into $\rvx_n = L^{(\vx)}_n + \Delta^{(\vx)}_n$ and $\vtheta_n = L^{(\vtheta)}_n + R^{(\vtheta)}_n + \Delta^{(\vtheta)}_n$.
Rewriting \eqref{eqn:eqn:two_timescale_SA_2a} gives
\begin{equation*}
    \rvx_n - \vmu = \gamma_{n+1}^{-1}\rmJ_{22}(\alpha)^{-1}(\rvx_{n+1}-\rvx_n) - \rmJ_{22}(\alpha)^{-1}( r^{(\rvx,1)}_n + r^{(\rvx,2)}_n + M^{(\rvx)}_{n+1} + \eta_n^{(\rvx)}),
\end{equation*}
and substituting the above equation back in \eqref{eqn:eqn:two_timescale_SA_2b} gives
\begin{equation}\label{eqn:decompose_vtheta_n}
\begin{split}
    \vtheta_{n+1} &- \vtheta^* =~ \vtheta_n - \vtheta^* + \beta_{n+1}\bigg(\rmJ_{11} (\vtheta_n-\vtheta^*) + \gamma_{n+1}^{-1}\rmJ_{12}(\alpha)\rmJ_{22}(\alpha)^{-1}(\rvx_{n+1}-\rvx_n) \\
    &- \rmJ_{12}(\alpha)\rmJ_{22}(\alpha)^{-1}( r^{(\rvx,1)}_n + r^{(\rvx,2)}_n + M^{(\rvx)}_{n+1} + \eta_n^{(\rvx)})  + r^{(\vtheta,1)}_n + r^{(\vtheta,2)}_n + M^{(\vtheta)}_{n+1} + \eta_n^{(\vtheta)}\bigg) \\
    =&~ (\rmI + \beta_{n+1}\rmJ_{11}) (\vtheta_n - \vtheta^*) + [\beta_{n+1}\gamma_{n+1}^{-1}\rmJ_{12}(\alpha)\rmJ_{22}(\alpha)^{-1}(\rvx_{n+1}-\rvx_n)] \\
    &+ \beta_{n+1}(M^{(\vtheta)}_{n+1} - \rmJ_{12}(\alpha)\rmJ_{22}(\alpha)^{-1}M^{(\rvx)}_{n+1}) \\
    &+ \beta_{n+1} ( r^{(\vtheta,1)}_n + r^{(\vtheta,2)}_n + \eta_n^{(\vtheta)} - \rmJ_{12}(\alpha)\rmJ_{22}(\alpha)^{-1}(r^{(\rvx,1)}_n + r^{(\rvx,2)}_n + \eta_n^{(\rvx)})),
\end{split}
\end{equation}
From \eqref{eqn:decompose_vtheta_n} we can see the iteration $\{\vtheta_n\}$ implicitly embeds the recursions of three sequences
\begin{itemize}
    \item $\beta_{n+1}\gamma_{n+1}^{-1}\rmJ_{12}(\alpha)\rmJ_{22}(\alpha)^{-1}(\rvx_{n+1}-\rvx_n)$;
    \item $\beta_{n+1}(M^{(\vtheta)}_{n+1} - \rmJ_{12}(\alpha)\rmJ_{22}(\alpha)^{-1}M^{(\rvx)}_{n+1})$;
    \item $\beta_{n+1} ( r^{(\vtheta,1)}_n + r^{(\vtheta,2)}_n + \eta_n^{(\vtheta)} - \rmJ_{12}(\alpha)\rmJ_{22}(\alpha)^{-1}(r^{(\rvx,1)}_n + r^{(\rvx,2)}_n + \eta_n^{(\rvx)})))$.
\end{itemize}
Let $u_n \triangleq \sum_{k=1}^n \beta_k$ and $s_n \triangleq \sum_{k=1}^n \gamma_k$. Below we define two iterations: 
\begin{subequations}
    \begin{equation}\label{eqn:L_n_vtheta}
    \begin{split}
    L_{n}^{(\vtheta)} = e^{\beta_{n}\rmJ_{11}} &L_{n-1}^{(\vtheta)} + \beta_{n} (M^{(\vtheta)}_{n} - \rmJ_{12}(\alpha)\rmJ_{22}(\alpha)^{-1}M^{(\rvx)}_{n}) \\
    &= \sum_{k=1}^n e^{(u_{n}-u_k)\rmJ_{11}}\beta_{k}(M^{(\vtheta)}_{k} - \rmJ_{12}(\alpha)\rmJ_{22}(\alpha)^{-1}M^{(\rvx)}_{k})
    \end{split}
    \end{equation}
    \begin{equation}\label{eqn:R_n_vtheta}
    \begin{split}
        R_{n}^{(\vtheta)} = e^{\beta_{n}\rmJ_{11}} &R_{n-1}^{(\vtheta)} + \beta_{n}\gamma_{n}^{-1} \rmJ_{12}(\alpha)\rmJ_{22}(\alpha)^{-1}(\rvx_{n}-\rvx_{n-1}) \\
        &= \sum_{k=1}^n e^{(u_{n}-u_k)\rmJ_{11}}\beta_{k}\gamma_{k}^{-1}\rmJ_{12}(\alpha)\rmJ_{22}(\alpha)^{-1}(\rvx_{k}-\rvx_{k-1})
    \end{split}
    \end{equation}
\end{subequations}
and a remaining term $\Delta_n^{(\vtheta)} \triangleq \vtheta_n - \vtheta^* - L_n^{(\vtheta)} - R_n^{(\vtheta)}$.

Similarly, for iteration $\rvx_n$, define the sequence $L_{n}^{(\rvx)}$ such that
    \begin{equation}\label{eqn:L_n_vx}
        L_{n}^{(\rvx)} = e^{\gamma_{n}\rmJ_{22}(\alpha)} L_{n-1}^{(\rvx)} + \gamma_{n}M^{(\rvx)}_{n} = \sum_{k=1}^n e^{(s_{n}-s_k)\rmJ_{22}(\alpha)}\gamma_{k}M^{(\rvx)}_{k},
    \end{equation}
and a remaining term 
\begin{equation}\label{eqn:delta_n_vx}
    \Delta_n^{(\rvx)} \triangleq \rvx_n - \vmu - L_n^{(\vtheta)}
\end{equation}
The decomposition of $\vtheta_n -\vtheta^*$ and $\rvx_n - \vmu$ in the above form is also standard in the single-timescale SA literature \citep{delyon2000stochastic,fort2015central}.

\textbf{Characterization of Sequences $\{L_{n}^{(\vtheta)}\}$ and $\{L_{n}^{(\rvx)}\}$}

we set a Martingale $Z^{(n)} = \{Z^{(n)}_k\}_{k\geq 1}$ such that
\begin{equation*}
    Z^{(n)}_k = \begin{pmatrix}
        \beta_n^{-1/2} e^{u_n \rmJ_{11}} & 0 \\ 0 & \gamma_n^{-1/2} e^{s_n\rmJ_{22}(\alpha)}
    \end{pmatrix} \times  \sum_{j=1}^k \begin{pmatrix}
        e^{-u_k\rmJ_{11}}\beta_k(M^{(\vtheta)}_{k} - \rmJ_{12}(\alpha)\rmJ_{22}(\alpha)^{-1}M^{(\rvx)}_{k}) \\
        e^{-s_k\rmJ_{22}(\alpha)}\gamma_{k}M^{(\rvx)}_{k}
    \end{pmatrix}.
\end{equation*}
Then, the Martingale difference array $Z_k^{(n)} - Z_{k-1}^{(n)}$ becomes
\begin{equation*}
    Z_k^{(n)} - Z_{k-1}^{(n)} = \begin{pmatrix}
        \beta_n^{-1/2} e^{(u_n-u_k) \rmJ_{11}}\beta_k(M^{(\vtheta)}_{k} - \rmJ_{12}(\alpha)\rmJ_{22}(\alpha)^{-1}M^{(\rvx)}_{k}) \\
        \gamma_n^{-1/2} e^{(s_n-s_k) \rmJ_{22}(\alpha)}\gamma_k M^{(\rvx)}_{k}
    \end{pmatrix}
\end{equation*}
and 
\begin{equation*}
        \sum_{k=1}^n \E\left[(Z_k^{(n)} - Z_{k-1}^{(n)})(Z_k^{(n)} - Z_{k-1}^{(n)})^T | \gF_{k-1}\right]=\begin{pmatrix}
            A_{1,n} & A_{2,n} \\ A_{2,n}^T & A_{4,n}
        \end{pmatrix},
\end{equation*}
where, in view of decomposition of $M^{(\vtheta)}_{n}$ and $M^{(\rvx)}_{n}$ in \eqref{eqn:55} and \eqref{eqn:56}, respectively,
\begin{subequations}
    \begin{equation}
    \begin{split}
        A_{1,n} = \beta_n^{-1} \sum_{k=1}^n \beta_k^2 e^{(u_n - u_k)\rmJ_{11}}&\bigg(\rmU_{22} \!+\! \rmD_k^{(1)} \!+\! \rmJ_{k}^{(1)} \!-\! (\rmU_{21} \!+\! \rmD_k^{(2)} \!+\! \rmJ_{k}^{(2)})(\rmJ_{12}(\alpha)\rmJ_{22}(\alpha)^{-1})^T \\
        &+ \rmJ_{12}(\alpha)\rmJ_{22}(\alpha)^{-1}(\rmU_{11} + \rmD_k^{(3)} + \rmJ_{k}^{(3)})(\rmJ_{12}(\alpha)\rmJ_{22}(\alpha)^{-1})^T \\
        &- \rmJ_{12}(\alpha)\rmJ_{22}(\alpha)^{-1}(\rmU_{21} + \rmD_k^{(2)} + \rmJ_{k}^{(2)})^T \bigg) e^{(u_n - u_k)(\rmJ_{11})^T},
    \end{split}
    \end{equation}
    \begin{equation}
        A_{2,n} = \beta_n^{-1/2}\gamma_n^{-1/2}\sum_{k=1}^n \beta_k\gamma_k e^{(u_n-u_k)\rmJ_{11}}(\rmU_{21} - \rmJ_{12}(\alpha)\rmJ_{22}(\alpha)^{-1}\rmU_{11})e^{(s_n-s_k)\rmJ_{22}(\alpha)^T},
    \end{equation}
    \begin{equation}
            A_{4,n} = \gamma_n^{-1} \sum_{k=1}^n \gamma_k^2 e^{(s_n - s_k)\rmJ_{22}(\alpha)}(\rmU_{11} + \rmD_k^{(3)} + \rmJ_{k}^{(3)}) e^{(s_n - s_k)\rmJ_{22}(\alpha)^T}.
    \end{equation}
\end{subequations}
We further decompose $A_{1,n}$ into three parts:
\begin{equation}\label{eqn:A_1,n}
\begin{split}
    A_{1,n} = &\beta_n^{-1} \sum_{k=1}^n \bigg(\beta_k^2 e^{(u_n - u_k)\rmJ_{11}}(\rmU_{22} - \rmU_{21}(\rmJ_{12}(\alpha)\rmJ_{22}(\alpha)^{-1})^T \\
    &~~~~~~- \rmJ_{12}(\alpha)\rmJ_{22}(\alpha)^{-1}\rmU_{12} + \rmJ_{12}(\alpha)\rmJ_{22}(\alpha)^{-1} \rmU_{11} (\rmJ_{12}(\alpha)\rmJ_{22}(\alpha)^{-1})^T)e^{(u_n - u_k)(\rmJ_{11})^T}\bigg) \\
    &+ \beta_n^{-1} \sum_{k=1}^n \bigg(\beta_k^2 e^{(u_n - u_k)\rmJ_{11}} (\rmD_k^{(1)} + \rmJ_{12}(\alpha)\rmJ_{22}(\alpha)^{-1} \rmD_k^{(3)} (\rmJ_{12}(\alpha)\rmJ_{22}(\alpha)^{-1})^T \\
    &~~~~~~- \rmD_k^{(2)}(\rmJ_{12}(\alpha)\rmJ_{22}(\alpha)^{-1})^T 
    - \rmJ_{12}(\alpha)\rmJ_{22}(\alpha)^{-1}(\rmD_k^{(2)})^T)e^{(u_n - u_k)(\rmJ_{11})^T}\bigg) \\
    &+ \beta_n^{-1} \sum_{k=1}^n \bigg(\beta_k^2 e^{(u_n - u_k)\rmJ_{11}} (\rmJ_k^{(1)} + \rmJ_{12}(\alpha)\rmJ_{22}(\alpha)^{-1} \rmJ_k^{(3)} (\rmJ_{12}(\alpha)\rmJ_{22}(\alpha)^{-1})^T  \\
    &~~~~~~- \rmJ_k^{(2)}(\rmJ_{12}(\alpha)\rmJ_{22}(\alpha)^{-1})^T- \rmJ_{12}(\alpha)\rmJ_{22}(\alpha)^{-1}(\rmJ_k^{(2)})^T)e^{(u_n - u_k)(\rmJ_{11})^T}\bigg) \\ \triangleq &A_{1,n}^{(a)}+A_{1,n}^{(b)}+A_{1,n}^{(c)}.
\end{split}
\end{equation}
Here, we define $\rmU_{\vtheta}(\alpha) \triangleq \rmU_{22} - \rmU_{21}(\rmJ_{12}(\alpha)\rmJ_{22}(\alpha)^{-1})^T - \rmJ_{12}(\alpha)\rmJ_{22}(\alpha)^{-1}\rmU_{12} + \rmJ_{12}(\alpha)\rmJ_{22}(\alpha)^{-1} \rmU_{11} (\rmJ_{12}(\alpha)\rmJ_{22}(\alpha)^{-1})^T$. By \eqref{eqn:definition_matrix_Q} and \eqref{eqn:matrix_Gamma} in Lemma \ref{lemma:M_n_property}, we have 
\begin{equation}\label{eqn:Gamma_vtheta}
\rmU_{\vtheta}(\alpha) = \sum_{i=1}^{N-1} \frac{1}{(\alpha(1+\lambda_i)+1)^2}\cdot\frac{1+\lambda_i}{1-\lambda_i} \rmH^T \rvu_i \rvu_i^T \rmH.
\end{equation}
Then, we have the following lemma.
\begin{lemma}\label{lemma:property_A_1_n}
    For $A^{(a)}_{1,n}, A^{(b)}_{1,n}, A^{(c)}_{1,n}$ defined in \eqref{eqn:A_1,n}, we have
        \begin{equation}
        \lim_{n\to\infty} A^{(a)}_{1,n} = \rmV_{\vtheta}(\alpha), \quad \lim_{n\to\infty} \|A^{(b)}_{1,n}\| = 0, \quad \lim_{n\to\infty} \|A^{(c)}_{1,n}\| = 0,
        \end{equation}
        where $\rmV_{\vtheta}(\alpha)$ is the solution to the Lyapunov equation $$\left(\rmJ_{11} + \frac{\mathds{1}_{\{b=1\}}}{2}\mI\right) \rmV_{\vtheta}(\alpha) + \rmV_{\vtheta}(\alpha) \left(\rmJ_{11} + \frac{\mathds{1}_{\{b=1\}}}{2}\mI\right)^T + \rmU_{\vtheta}(\alpha) = 0.$$
\end{lemma}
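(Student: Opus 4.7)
The plan is to handle the three claims separately, exploiting (i) the Hurwitz property of $\rmJ_{11}$ guaranteed by Assumption \ref{assump:2}, (ii) the a.s.\ vanishing $\rmD_k^{(i)} \to 0$ from Lemma \ref{lemma:M_n_property}, and (iii) the averaged bound $\gamma_n \E[\|\sum_{k=1}^n \rmJ_k^{(i)}\|] \to 0$ from the same lemma, together with the case-(i) step-size gap $\beta_n = o(\gamma_n)$.

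For the first claim, I would recognize $A_{1,n}^{(a)}$ as a Riemann-type approximation of the Lyapunov integral $\int_0^\infty e^{tM}\,\rmU_{\vtheta}(\alpha)\,e^{tM^T}\,dt$ with $M \triangleq \rmJ_{11} + \frac{\mathds{1}_{\{b=1\}}}{2}\mI$, via the substitution $t = u_n - u_k$ and $u_{k+1}-u_k \approx \beta_k$. For $b<1$, the prefactor $\beta_n^{-1}\beta_k^2$ reduces directly to $\beta_k$, so a routine change of variables and a dominated-convergence argument (justified by Hurwitz decay of $e^{tM}$) produces the integral. For $b = 1$, the prefactor becomes $\beta_n^{-1}\beta_k^2 \approx (k/n)\beta_k$, and the extra factor $(k/n)$ manifests, after passing to the limit, as the $\frac{1}{2}\mI$ shift inside $M$. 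The resulting integral is the unique solution of the Lyapunov equation stated in the lemma; this is essentially the classical single-timescale asymptotic-covariance computation (see, e.g., \citet{delyon2000stochastic,fort2015central}).

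For the second claim, fix $\epsilon > 0$ and use the a.s. convergence $\rmD_k^{(i)} \to 0$ to choose a (random) $K$ with $\|\rmD_k^{(i)}\| < \epsilon$ for all $k > K$. Split $A_{1,n}^{(b)}$ at $K$. The head over $k \le K$ is a finite sum whose weights $\beta_n^{-1}\beta_k^2\|e^{(u_n-u_k)\rmJ_{11}}\|^2$ vanish as $n \to \infty$ by Hurwitz decay, so the head contribution tends to zero. The tail over $k > K$ is bounded in norm by $\epsilon \cdot C$, where $C$ is the finite limit obtained by applying the very argument of part (a) with $\rmU_{\vtheta}(\alpha)$ replaced by the identity. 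Letting $\epsilon \downarrow 0$ gives $\|A_{1,n}^{(b)}\| \to 0$ a.s.

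For the third claim, I would apply Abel summation. Writing $S_n^{(i)} = \sum_{k=1}^n \rmJ_k^{(i)}$, the sum defining $A_{1,n}^{(c)}$ decomposes into a boundary term proportional to $\beta_n^{-1}\beta_n^2 S_n^{(i)} = \beta_n S_n^{(i)}$ plus a residual whose coefficients are finite differences of the map $k \mapsto \beta_k^2\, e^{(u_n-u_k)\rmJ_{11}}\,(\cdot)\,e^{(u_n-u_k)\rmJ_{11}^T}$, which are of order $\beta_k \cdot \beta_k^2 \|e^{(u_n-u_k)\rmJ_{11}}\|^2$. The case-(i) condition $\beta_n = o(\gamma_n)$ upgrades $\gamma_n\E[\|S_n^{(i)}\|]\to 0$ into $\beta_n\E[\|S_n^{(i)}\|]\to 0$, which controls the boundary term; the residual is bounded in expectation by a Fubini/Tonelli argument combined once more with the Hurwitz decay of $\rmJ_{11}$, yielding convergence to zero in $L^1$ and hence in probability. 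The main technical obstacle will lie precisely here: unlike $\rmD_k^{(i)}$, the matrices $\rmJ_k^{(i)}$ do not vanish individually but only in the Cesàro-averaged sense of Lemma \ref{lemma:M_n_property}, so Abel summation must be executed against the two-sided matrix exponential envelopes with uniform-in-$n$ control of the boundary contributions—this is where the timescale separation $\beta_n = o(\gamma_n)$ plays its essential role.
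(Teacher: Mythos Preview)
Your proposal is correct and follows essentially the same route as the paper: part (a) via the classical Lyapunov-integral convergence (the paper packages this as Lemma \ref{lemma:deterministic_convergence} from \citet{mokkadem2005compact}), part (b) via the head/tail Ces\`aro argument (the paper invokes the equivalent Lemma \ref{lemma:upperbound_of_sequence_x} from \citet{fort2015central}), and part (c) via Abel summation on $\Xi_n=\sum_k\rmJ_k^{(i)}$ combined with $\beta_n\|\Xi_n\|\to 0$ and the same Ces\`aro lemma applied to the residual. The only cosmetic difference is that you conclude part (c) in $L^1$ (which is what Lemma \ref{lemma:M_n_property} literally provides and suffices for the downstream CLT), whereas the paper asserts the a.s.\ version by implicitly reading a pointwise bound out of the proof of Lemma \ref{lemma:M_n_property}.
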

\begin{proof}
    First, from Lemma \ref{lemma:matrix_norm_inequality}, we have for some $c, T>0$ such that
\begin{equation*}
\begin{split}
    \|A^{(b)}_{1,n}\| \leq \beta_n^{-1} \sum_{k=1}^n& \bigg\|\rmD_k^{(1)} + \rmJ_{12}(\alpha)\rmJ_{22}(\alpha)^{-1} \rmD_k^{(3)} (\rmJ_{12}(\alpha)\rmJ_{22}(\alpha)^{-1})^T - \rmD_k^{(2)}(\rmJ_{12}(\alpha)\rmJ_{22}(\alpha)^{-1})^T \\
    &- \rmJ_{12}(\alpha)\rmJ_{22}(\alpha)^{-1}(\rmD_k^{(2)})^T\bigg\| \cdot \beta_k^2 c^2e^{-2T(u_n - u_k)}.
\end{split}
\end{equation*}
Applying Lemma \ref{lemma:upperbound_of_sequence_x}, together with $\rmD^{(i)}_n \to 0$ a.s. in Lemma \ref{lemma:M_n_property}, gives
\begin{equation*}
\begin{split}
    &\limsup_n \|A^{(b)}_{1,n}\| \\ \leq& \frac{1}{C(b,p)} \limsup_n \|(\rmD_n^{(1)} + \rmJ_{12}(\alpha)\rmJ_{22}(\alpha)^{-1} \rmD_n^{(4)} (\rmJ_{12}(\alpha)\rmJ_{22}(\alpha)^{-1})^T \\
    & ~~~~~~~~~~~~~~~~~~~~~ -\rmD_n^{(2)}(\rmJ_{12}(\alpha)\rmJ_{22}(\alpha)^{-1})^T - \rmJ_{12}(\alpha)\rmJ_{22}(\alpha)^{-1}\rmD_n^{(3)})\| \\
    &= 0.
\end{split}
\end{equation*}
We now consider $\|A^{(c)}_{1,n}\|$. Set 
\begin{align*}
    \Xi_n \triangleq \sum_{k=1}^n \big( &\rmJ_n^{(1)} + \rmJ_{12}(\alpha)\rmJ_{22}(\alpha)^{-1} \rmJ_k^{(3)} (\rmJ_{12}(\alpha)\rmJ_{22}(\alpha)^{-1})^T  \\
    &- \rmJ_k^{(2)}(\rmJ_{12}(\alpha)\rmJ_{22}(\alpha)^{-1})^T- \rmJ_{12}(\alpha)\rmJ_{22}(\alpha)^{-1}(\rmJ_k^{(2)})^T\big),
\end{align*}
we can rewrite $A^{(c)}_{1,n}$ as 
\begin{equation*}
    A^{(c)}_{1,n} = \beta_n^{-1}\sum_{k=1}^n \beta_k^2 e^{(u_n-u_k) \rmJ_{11}} (\Xi_k - \Xi_{k-1})e^{(u_n-u_k) (\rmJ_{11})^T}.
\end{equation*}
By the Abel transformation, we have
\begin{equation}\label{eqn:94}
\begin{split}
    A^{(c)}_{1,n} = \beta_n \Xi_n +  \beta_n^{-1}\sum_{k=1}^{n-1} &\Big[\beta_k^2 e^{(u_n - u_k)\rmJ_{11}} \Xi_k e^{(u_n - u_k)(\rmJ_{11})^T} \\
    &- \beta_{k+1}^2 e^{(u_n - u_{k+1})\rmJ_{11}} \Xi_k e^{(u_n - u_{k+1})(\rmJ_{11})^T}\Big].
\end{split}
\end{equation}
We know from Lemma \ref{lemma:M_n_property} that $\beta_n \Xi_n \to 0$ a.s. because $\Xi_n = o(\gamma_n^{-1})$. Besides,
\begin{equation*}
\begin{split}
    \| \beta_k e^{(u_n-u_k)\rmJ_{11}} &- \beta_{k+1} e^{(u_n-u_{k+1})\rmJ_{11}}\| \\
    =&~ \|(\beta_k - \beta_{k+1})e^{(u_n-u_k)\rmJ_{11}} + \beta_{k+1}e^{(u_n-u_k)\rmJ_{11}}(\rmI - e^{-\beta_{k+1}\rmJ_{11}})\| \\
    \leq& ~ C_1 \beta_k^2 e^{-(u_n - u_k)T}
\end{split}
\end{equation*}
for some constant $C_1 > 0$ because $\beta_n - \beta_{n+1} \leq C_2\beta_n^2$ and $\|\rmI - e^{-\beta_{k+1}\rmJ_{11}}\| \leq C_3\beta_{k+1}$. Moreover, 
\begin{equation*}
\begin{split}
    \| \beta_k e^{(u_n-u_k)\rmJ_{11}}\| &+ \|\beta_{k+1} e^{(u_n-u_{k+1})\rmJ_{11}}\| \\
    \leq& \beta_k \|e^{(u_n-u_k)\rmJ_{11}}\| + \beta_{k}\|e^{(u_n-u_{k})\rmJ_{11}}\|\cdot\|e^{-\beta_{k+1}\rmJ_{11}}\| \\ 
    \leq& C_4 \beta_k e^{-(u_n-u_k)T}.
\end{split}
\end{equation*}
Using Lemma \ref{lemma:matrix_inequality} on \eqref{eqn:94} gives 
\begin{equation*}
    \|A^{(c)}_{1,n}\| \leq C_1C_4 \beta_n^{-1}\sum_{k=1}^{n-1} \beta_k^2 e^{-2(u_n-u_k)T} \|\beta_k \Xi_k\| + \| \beta_n \Xi_n\|.
\end{equation*}
Applying Lemma \ref{lemma:upperbound_of_sequence_x} again gives
\begin{equation*}
    \limsup_{n} \|A^{(c)}_{1,n}\| \leq C_5\limsup_n \|\beta_n \Xi_n\| = 0
\end{equation*}
for some constant $C_5 > 0$.

Finally, we provide an existing lemma below.
\begin{lemma}[\cite{mokkadem2005compact} Lemma 4]\label{lemma:deterministic_convergence}
For a sequence with decreasing step size $\beta_n = (n+1)^{-b}$ for $b \in (1/2,1]$, $u_n = \sum_{k=1}^n \beta_k$, a positive semi-definite matrix $\Gamma$ and a Hurwitz matrix $\rmQ$, which is given by
$$\beta_n^{-1} \sum_{k=1}^n \beta_n^2 e^{(u_n - u_k)\rmQ} \Gamma e^{(u_n - u_k)\rmQ^T},$$
we have 
\begin{equation*}
    \lim_{n\to\infty} \beta_n^{-1} \sum_{k=1}^n \beta_n^2 e^{(u_n - u_k)\rmQ} \Gamma e^{(u_n - u_k)\rmQ^T} = \rmV
\end{equation*}
where $\rmV$ is the solution of the Lyapunov equation 
$$\left(\rmQ + \frac{\mathds{1}_{\{b = 1\}}}{2}\rmI\right)\rmV + \rmV\left(\rmQ^T + \frac{\mathds{1}_{\{b = 1\}}}{2}\rmI\right) + \Gamma = 0.$$
\end{lemma}
Then, $\lim_{n\to\infty} A^{(a)}_{1,n} = \rmV_{\vtheta}(\alpha)$ is a direct application of Lemma \ref{lemma:deterministic_convergence}.
\end{proof}

We can follow the similar steps in Lemma \ref{lemma:property_A_1_n} to obtain
\begin{equation*}
    \lim_{n\to\infty} A_{4,n} = \rmV_{\rvx}(\alpha),
\end{equation*}
where $\rmV_{\rvx}(\alpha)$ is in the form of \eqref{eqn:asymptotic_covariance_matrix_vx}. 

The last step is to show $\lim_{n\to\infty} A_{2,n} = 0$. Note that
\begin{equation*}
\begin{split}
    \| A_{2,n}\| &= O\left(\beta_{n}^{-1/2}\gamma_n^{-1/2}\sum_{k=1}^n \beta_k\gamma_k \|e^{(u_n-u_k)\rmJ_{11}}\|\|e^{(s_n-s_k)\rmJ_{22}(\alpha)^T}\|\right) \\
    &= O\left(\beta_{n}^{-1/2}\gamma_n^{-1/2} \sum_{k=1}^n \beta_k\gamma_k e^{-(u_n-u_k)T}e^{-(s_n-s_k)T'}\right) \\
    &= O\left(\beta_{n}^{-1/2}\gamma_n^{-1/2} \sum_{k=1}^n \beta_k\gamma_k e^{-(s_n-s_k)T'}\right),
\end{split}
\end{equation*}
where the second equality is from Lemma \ref{lemma:matrix_norm_inequality}. Then, we use Lemma \ref{lemma:upperbound_of_sequence_x} with $p=0$ to obtain 
\begin{equation}\label{eqn:74}
    \sum_{k=1}^n \beta_k\gamma_k e^{-(s_n-s_k)T'} = O(\beta_n)
\end{equation}
Additionally, since $\beta_n = o(\gamma_n)$, we have
\begin{equation*}
    \beta_n^{-1/2}\gamma_{n}^{-1/2}\sum_{k=1}^n \beta_k\gamma_k^{-1/2}\gamma_k^{3/2} e^{-(s_n-s_k)T'} =O(\beta_n^{1/2}\gamma_n^{-1/2}) = o(1).
\end{equation*}
Then, it follows that $\lim_{n\to\infty} A_{2,n} = 0$. Therefore, we obtain 
\begin{equation*}
\lim_{n\to\infty}\sum_{k=1}^n \E\left[(Z_k^{(n)} - Z_{k-1}^{(n)})(Z_k^{(n)} - Z_{k-1}^{(n)})^T | \gF_{k-1}\right] = \begin{pmatrix}
    \rmV_{\vtheta}(\alpha) & 0 \\ 0 & \rmV_{\rvx}(\alpha)
\end{pmatrix}.
\end{equation*}

Now, we turn to verifying the conditions in Theorem \ref{theorem:clt_martingale}. For some $\tau > 0$, we have
\begin{equation}\label{eqn:quadratic_variance_rate}
\begin{split}
    &\sum_{k=1}^n \E\left[\|Z_{k}^{(n)} - Z_{k-1}^{(n)}\|^{2+\tau} | \gF_{k-1}\right] \\
    &= O\left(\beta_n^{-(1+\frac{\tau}{2})}\sum_{k=1}^n \beta_k^{2+\frac{\tau}{2}} \beta_k^{\frac{\tau}{2}} e^{-(2+\tau)(u_n-u_k)T} + \gamma_n^{-(1+\frac{\tau}{2})}\sum_{k=1}^n \gamma_k^{2+\frac{\tau}{2}}\gamma_k^{\frac{\tau}{2}} e^{-(2+\tau)(s_n-s_k)T'}\right) \\
    &= O\left(\beta_n^{\frac{\tau}{2}} + \gamma_n^{\frac{\tau}{2}}\right)
\end{split}
\end{equation}
where the last equality comes from Lemma \ref{lemma:upperbound_of_sequence_x}. Since \eqref{eqn:quadratic_variance_rate} also holds for $\tau = 0$, we have
\begin{equation*}
    \sum_{k=1}^n \E\left[\|Z_{k}^{(n)} - Z_{k-1}^{(n)}\|^{2} | \gF_{k-1}\right] = O(1) < \infty.
\end{equation*}
Therefore, all the conditions in Theorem \ref{theorem:clt_martingale} are satisfied and its application then gives
\begin{equation}\label{eqn:CLT_Z_n}
    Z^{(n)} = \begin{pmatrix}
        \sqrt{\beta_n^{-1}} L_{n}^{(\vtheta)} \\ \sqrt{\gamma_n^{-1}} L_{n}^{(\rvx)} \end{pmatrix} \xrightarrow[dist.]{n\to\infty} N\left(0,\begin{pmatrix}
    \rmV_{\vtheta}(\alpha) & 0 \\ 0 & \rmV_{\rvx}(\alpha)
\end{pmatrix}\right).
\end{equation}

Furthermore, we have the following lemma about the strong convergence rate of $\{L^{(\vtheta)}_n\}$ and $\{L^{(\rvx)}_n\}$.
\begin{lemma}\label{lemma:strong_convergence_L_n}
    \begin{subequations}
        \begin{equation}
            \|L^{(\vtheta)}_n \| = O\left(\sqrt{\beta_n \log(u_n)}\right) \quad a.s.
        \end{equation}
        \begin{equation}
            \|L^{(\rvx)}_n \| = O\left(\sqrt{\gamma_n \log(s_n)}\right) \quad a.s.
        \end{equation}
    \end{subequations}
\end{lemma}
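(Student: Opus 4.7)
I plan to view $\{L_n^{(\vtheta)}\}$ and $\{L_n^{(\rvx)}\}$ as stable linear recursions driven by bounded $\mathcal{F}_n$-martingale-difference noise, and then invoke a law of the iterated logarithm (LIL) for such exponentially weighted martingale sums. The bulk of the hard work---controlling the predictable quadratic covariation of these sums---has already been carried out in the CLT derivation (Lemma \ref{lemma:property_A_1_n} and its $\rvx$-analogue), so the LIL essentially refines the CLT bound by an extra $\sqrt{\log u_n}$ (resp.\ $\sqrt{\log s_n}$) factor.

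Concretely, the recursions \eqref{eqn:L_n_vtheta} and \eqref{eqn:L_n_vx} read $L_n^{(\vtheta)} = e^{\beta_n \rmJ_{11}} L_{n-1}^{(\vtheta)} + \beta_n \xi_n^{(\vtheta)}$ with $\xi_n^{(\vtheta)} \triangleq M_n^{(\vtheta)} - \rmJ_{12}(\alpha)\rmJ_{22}(\alpha)^{-1} M_n^{(\rvx)}$, and $L_n^{(\rvx)} = e^{\gamma_n \rmJ_{22}(\alpha)} L_{n-1}^{(\rvx)} + \gamma_n M_n^{(\rvx)}$. By construction both $\{\xi_n^{(\vtheta)}\}$ and $\{M_n^{(\rvx)}\}$ are $\mathcal{F}_n$-martingale differences, and since the Poisson-equation solutions $\Tilde{H}_{\vtheta,\rvx}$ and $q_{\rvx}$ are continuous while $(\vtheta_n,\rvx_n)$ remains in a compact set a.s.\ by Assumption \ref{assump:4}, these difference sequences are uniformly bounded almost surely. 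Moreover, Assumption \ref{assump:2} makes $\rmJ_{11} + \tfrac{\mathds{1}_{\{b=1\}}}{2}\rmI$ Hurwitz and \citet[Lemma 3.4]{doshi2023self} makes $\rmJ_{22}(\alpha)$ Hurwitz, so both semigroups $e^{t\rmJ_{11}}$ and $e^{t\rmJ_{22}(\alpha)}$ admit an exponential-decay bound $\|e^{tQ}\| \leq Ce^{-Tt}$ of the type already used in the proof of Lemma \ref{lemma:property_A_1_n} (with the $\mathds{1}_{\{b=1\}}/2$ shift absorbed into the step-size normalization when $b = 1$).

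The final step is to apply the LIL for exponentially weighted martingale sums: for any uniformly bounded $\mathcal{F}_n$-martingale difference $\{\xi_n\}$ and any Hurwitz (appropriately shifted) matrix $\rmQ$, one has
\[ \left\| \sum_{k=1}^n e^{(u_n - u_k)\rmQ}\, \beta_k\, \xi_k \right\| = O\!\left(\sqrt{\beta_n \log u_n}\right) \quad \text{a.s.}, \]
and analogously with $(\gamma_k, s_k)$ in place of $(\beta_k, u_k)$. Specializing to $(\rmQ, \xi_k, \beta_k) = (\rmJ_{11}, \xi_k^{(\vtheta)}, \beta_k)$ and to $(\rmJ_{22}(\alpha), M_k^{(\rvx)}, \gamma_k)$ yields the two claimed bounds. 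The main obstacle is a careful bookkeeping in the $b = 1$ case, where the shift $\tfrac{1}{2}\rmI$ is indispensable for the semigroup to be exponentially contracting on the natural time scale $u_n \sim \log n$, and where the verification of the LIL hypothesis on the predictable quadratic variation must exactly recover the limiting covariances $\rmV_{\vtheta}(\alpha)$ and $\rmV_{\rvx}(\alpha)$; once that is handled, the LIL delivers the $\sqrt{\log u_n}$ (resp.\ $\sqrt{\log s_n}$) refinement directly.
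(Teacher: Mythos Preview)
Your proposal is correct and follows essentially the same route as the paper: both reduce the claim to an LIL-type bound for exponentially weighted martingale sums driven by uniformly bounded (via Assumption \ref{assump:4} and continuity of the Poisson solutions) martingale differences, with the Hurwitz property of $\rmJ_{11}$ and $\rmJ_{22}(\alpha)$ supplying the contraction. The paper simply invokes \citet[Lemma 1]{pelletier1998almost} directly rather than re-deriving the LIL, and that lemma only requires $\limsup_n \E[\|M_{n+1}\|^2\mid\gF_n]<\infty$ and a $(2+\tau)$-moment bound---so your remark that the predictable quadratic variation ``must exactly recover'' $\rmV_{\vtheta}(\alpha)$ and $\rmV_{\rvx}(\alpha)$ is stronger than necessary; boundedness of conditional second moments already suffices.
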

\begin{proof}
    This proof follows \citet[Lemma 1]{pelletier1998almost}. We only need the special case of \citet[Lemma 1]{pelletier1998almost} that fits our scenario; e.g., we let the two types of step sizes therein to be the same. Specifically, we attach the following lemma.
\begin{lemma}[\cite{pelletier1998almost} Lemma 1]\label{lemma:tight_upperbound_Ln}
    Consider a sequence 
    $$L_{n+1} = e^{u_n\rmH} \sum_{k=1}^n e^{-u_k\rmH}\beta_k M_{k+1},$$
    where $\beta_n = n^{-b}$, $1/2<b\leq 1$, and $\{M_{n}\}$ is a Martingale difference sequence adapted to the filtration $\gF$ such that, almost surely, $\limsup_n \E[\|M_{n+1}\|^2 | \gF_n] \leq M^2$ and there exists $\tau \in (0,2)$, $b(2+\tau) > 2$, such that $\sup_n \E[\|M_{n+1}\|^{2+\tau}|\gF_n] < \infty$. Then, almost surely,
    \begin{equation}
        \limsup_n \frac{\|L_n\|}{\sqrt{\beta_n \log(u_n)}} \leq C_M,
    \end{equation}
    where $C_M$ is a constant dependent on $M$.
\end{lemma}
By assumption \ref{assump:4}, the iterates $(\vtheta_n,\rvx_n)$ are bounded within a compact subset $\Omega$. Recall the form of $M^{(\vtheta)}_{n+1}, M^{(\rvx)}_{n+1}$ defined in \eqref{eqn:43}, it comprises the functions $\Tilde{H}_{\vtheta_n,\rvx_n}(i)$ and $(\rmK_{\rvx_n}\Tilde{H}_{\vtheta_n,\rvx_n})(i)$, which in turn include the function $H(\vtheta,i)$. We know that $H(\vtheta,i)$ is bounded for $\vtheta$ in some compact set $\gC$. Thus, for any $(\vtheta_n,\rvx_n) \in \Omega$ for some compact set $\Omega$, $M^{(\vtheta)}_{n+1}, M^{(\rvx)}_{n+1}$ are bounded and we denote by
$c_{\vtheta}$ and $c_{\rvx}$ as their upper bounds, i.e., $\E[\|M^{(\vtheta)}_{n+1}\|^2| \gF_n] \leq c^{(\vtheta)}_{\Omega}$ and $\E[\|M^{(\rvx)}_{n+1}\|^2| \gF_n] \leq c^{(\rvx)}_{\Omega}$. We only need to replace the upper bound $c$ in Lemma \ref{lemma:tight_upperbound_Ln} by $c^{(\vtheta)}_{\Omega}$ for the sequence $\{L_n^{(\vtheta)}\}$ (resp. $c^{(\rvx)}_{\Omega}$ for the sequence $\{L_n^{(\rvx)}\}$), i.e.,
\begin{subequations}
    \begin{equation}
        \limsup_n \frac{\|L_n^{(\vtheta)}\|}{\sqrt{\beta_n \log(u_n)}} \leq C^{(\vtheta)}_{\Omega},
    \end{equation}
    \begin{equation}
        \limsup_n \frac{\|L_n^{(\rvx)}\|}{\sqrt{\gamma_n \log(s_n)}} \leq C^{(\rvx)}_{\Omega},
    \end{equation}
\end{subequations}
such that $\|L_n^{(\vtheta)}\| = O(\sqrt{\beta_n\log(u_n)})$ a.s. and $\|L_n^{(\rvx)}\| = O(\sqrt{\gamma_n\log(s_n)})$ a.s. which completes the proof.
\end{proof}

Note that we have $\rvx_n - \vmu$ and $L_n^{(\rvx)}$ weakly converge to the same Gaussian distribution from Remark \ref{remark:CLT_SRRW_iteration} and \eqref{eqn:CLT_Z_n}. Then, $\gamma_n^{-1/2}\Delta_n^{(\rvx)}$ weakly converges to zero, implying that $\gamma_n^{-1/2}\Delta_n^{(\rvx)}$ converges to zero with probability $1$. Therefore, together with $\{\gamma_n\}$ being strictly positive, we have
\begin{equation}\label{eqn:delta_vx}
    \Delta_n^{(\rvx)} = o(\sqrt{\gamma_n}) \quad a.s.
\end{equation}
\textbf{Characterization of Sequences $\{R_{n}^{(\vtheta)}\}$ and $\{\Delta_{n}^{(\vtheta)}\}$}

We first consider the sequence $\{R_n^{(\vtheta)}\}$. We assume a positive real-valued bounded sequence $\{w_n\}$ under the same conditions as in \citet[Definition 1]{mokkadem2006convergence}, i.e.,
\begin{definition}\label{definition:w_n}
In the case $b < 1$, $\frac{w_n}{w_{n+1}} = 1 + o(\beta_n)$, which also implies $\frac{w_n}{w_{n+1}} = 1 + o(\gamma_n)$.

In the case $b = 1$, there exist $\epsilon \geq 0$ and a nondecreasing slowly varying function $l(n)$ such that $w_n = n^{-\epsilon}l(n)$. When $\epsilon = 0$, we require function $l(n)$ to be bounded.
\qed
\end{definition}

Since $\|\rvx_n - \vmu\| = o(1)$ by a.s. convergence result, we can assume that there exists $\{w_n\}$ such that $\|\rvx_n - \vmu\| = O(w_n)$. Then, from \eqref{eqn:R_n_vtheta}, we can use the Abel transformation and obtain
\begin{equation*}
\begin{split}
    R_{n}^{(\vtheta)} =&~ \beta_n\gamma_n^{-1}\rmJ_{12}(\alpha)\rmJ_{22}(\alpha)^{-1}(\rvx_n-\vmu) - e^{(u_n-u_1)\rmJ_{11}} \beta_1\gamma_1^{-1}\rmU_{11} \rmJ_{12}(\alpha)\rmJ_{22}(\alpha)^{-1}(\rvx_1-\vmu) \\
    &~ + e^{u_n\rmJ_{11}} \sum_{k=1}^{n-1}\left(e^{-u_k\rmJ_{11}}\beta_k\gamma_k^{-1} - e^{-u_{k+1}\rmJ_{11}}\beta_{k+1}\gamma_{k+1}^{-1}\right)\rmJ_{12}(\alpha)\rmJ_{22}(\alpha)^{-1}(\rvx_{k+1}-\vmu),
\end{split}
\end{equation*}
where the last term on the RHS can be rewritten as
\begin{equation*}
    W_n = \sum_{k=1}^{n-1} e^{(u_n-u_{k+1})\rmJ_{11}}\beta_{k+1}\gamma_{k+1}^{-1}\left(e^{\beta_{k+1}\rmJ_{11}}\beta_k\beta_{k+1}^{-1}\gamma_k^{-1}\gamma_{k+1} - \rmI\right)\rmJ_{12}(\alpha)\rmJ_{22}(\alpha)^{-1}(\rvx_{k+1}-\vmu).
\end{equation*}
Using Lemma \ref{lemma:upperbound_of_sequence_x} on $W_n$ gives $\|W_n\| = O(\gamma_n^{-1}\|e^{\beta_n\rmJ_{11}} - \mI\| \|\rvx_n - \vmu\|) = O(\gamma^{-1}_n\beta_n \omega_n)$. Then, it follows that for some $T>0$,
\begin{equation}\label{eqn:R_n}
    \|R_{n}^{(\vtheta)}\| = O\left(\beta_n\gamma_n^{-1}\omega_n + \|e^{u_n\rmJ_{11}}\|\right) = O(\beta_n\gamma_n^{-1}\omega_n + e^{-u_n T})
\end{equation}
with the application of Lemma \ref{lemma:matrix_norm_inequality} to the second equality.

Then, we shift our focus on $\{\Delta_{n}^{(\vtheta)}\}$. Specifically, we take \eqref{eqn:decompose_vtheta_n}, \eqref{eqn:L_n_vtheta}, and \eqref{eqn:L_n_vx} back to $\Delta_{n}^{(\vtheta)} = \vtheta_n - \vtheta^* - L_n^{(\vtheta)} - R_n^{(\vtheta)}$, and obtain
\begin{equation}\label{eqn:Delta_n_vx_decompose}
    \begin{split}
        \Delta_{n+1}^{(\vtheta)} =& (\rmI + \beta_{n+1}\rmJ_{11})(\vtheta_n-\vtheta^*)\\
        &+ \beta_{n+1} ( r^{(\vtheta,1)}_n + r^{(\vtheta,2)}_n + \eta_n^{(\vtheta)} - \rmJ_{12}(\alpha)\rmJ_{22}(\alpha)^{-1}(r^{(\rvx,1)}_n + r^{(\rvx,2)}_n + \eta_n^{(\rvx)})) \\
        &- e^{\beta_{n+1}\rmJ_{11}}L_n^{(\vtheta)} - e^{\beta_{n+1}\rmJ_{11}}R_n^{(\vtheta)} \\
        =& (\rmI + \beta_{n+1}\rmJ_{11})(\vtheta_n-\vtheta^*)\\
        &+ \beta_{n+1} ( r^{(\vtheta,1)}_n + r^{(\vtheta,2)}_n + \eta_n^{(\vtheta)} - \rmJ_{12}(\alpha)\rmJ_{22}(\alpha)^{-1}(r^{(\rvx,1)}_n + r^{(\rvx,2)}_n + \eta_n^{(\rvx)})) \\
        & - (\rmI + \beta_{n+1}\rmJ_{11} + O(\beta_{n+1}^2))L_n^{(\vtheta)} - (\rmI + \beta_{n+1}\rmJ_{11} + O(\beta_{n+1}^2))R_n^{(\vtheta)} \\
        =& (\rmI + \beta_{n+1}\rmJ_{11})\Delta_{n}^{(\vtheta)} + O(\beta_{n+1}^2)(L_n^{(\vtheta)} + R_n^{(\vtheta))}) \\
        &+ \beta_{n+1} ( r^{(\vtheta,1)}_n + r^{(\vtheta,2)}_n + \eta_n^{(\vtheta)} - \rmJ_{12}(\alpha)\rmJ_{22}(\alpha)^{-1}(r^{(\rvx,1)}_n + r^{(\rvx,2)}_n + \eta_n^{(\rvx)})),
    \end{split}
\end{equation}
where the second equality is by taking the Taylor expansion $e^{\beta_{n+1}\rmJ_{11}} = \rmI + \beta_{n+1}\rmJ_{11} + O(\beta_{n+1}^2)$. 

Define $\Phi_{k,n} \triangleq \prod_{j={k+1}}^n (\rmI + \beta_{j}\rmJ_{11})$ and by convention $\Phi_{n,n} = \rmI$. Then, we rewrite \eqref{eqn:Delta_n_vx_decompose} as
\begin{equation}\label{eqn:Delta_n_vx_decompose2}
\begin{split}
    \Delta_{n+1}^{(\vtheta)} =& \sum_{k=1}^n \Phi_{k,n}\beta_{k+1} \left(O(\beta_{k+1})L_{k}^{(\vtheta)} + O(\beta_{k+1})R_k^{(\vtheta)}\right) \\
    &+ \sum_{k=1}^n \Phi_{k,n}\beta_{k+1}( r^{(\vtheta,1)}_k + r^{(\vtheta,2)}_k + \eta_k^{(\vtheta)} - \rmJ_{12}(\alpha)\rmJ_{22}(\alpha)^{-1}(r^{(\rvx,1)}_k + r^{(\rvx,2)}_k + \eta_k^{(\rvx)})) \\
    =& \sum_{k=1}^n \Phi_{k,n}\beta_{k+1} \left(O(\beta_{k+1})L_{k}^{(\vtheta)} + O(\beta_{k+1})R_k^{(\vtheta)}\right) \\
    &+ \sum_{k=1}^n \Phi_{k,n}\beta_{k+1}( r^{(\vtheta,1)}_k + \eta_k^{(\vtheta)} - \rmJ_{12}(\alpha)\rmJ_{22}(\alpha)^{-1}(r^{(\rvx,1)}_k + \eta_k^{(\rvx)})) \\
    &+\sum_{k=1}^n \Phi_{k,n}\beta_{k+1}(r^{(\vtheta,2)}_k - \rmJ_{12}(\alpha)\rmJ_{22}(\alpha)^{-1}r^{(\rvx,2)}_k).
\end{split}
\end{equation}
From \eqref{eqn:Delta_n_vx_decompose2}, we can indeed decompose $\Delta_{n+1}^{(\vx)}$ into two parts $\Delta_{n+1}^{(\vtheta)} = \Delta_{n+1}^{(\vtheta,1)} + \Delta_{n+1}^{(\vtheta,2)}$, where
\begin{subequations}
    \begin{equation}\label{eqn:86a}
    \begin{split}
        \Delta_{n+1}^{(\vtheta,1)} \triangleq& \sum_{k=1}^n \Phi_{k,n}\beta_{k+1} \left(O(\beta_{k+1})L_{k}^{(\vtheta)} + O(\beta_{k+1})R_k^{(\vtheta)}\right) \\
        &+ \sum_{k=1}^n \Phi_{k,n}\beta_{k+1}( r^{(\vtheta,1)}_k + \eta_k^{(\vtheta)} - \rmJ_{12}(\alpha)\rmJ_{22}(\alpha)^{-1}(r^{(\rvx,1)}_k + \eta_k^{(\rvx)})),
    \end{split}
    \end{equation}
    \begin{equation}\label{eqn:86b}
        \Delta_{n+1}^{(\vtheta,2)} \triangleq \sum_{k=1}^n \Phi_{k,n}\beta_{k+1}(r^{(\vtheta,2)}_k - \rmJ_{12}(\alpha)\rmJ_{22}(\alpha)^{-1}r^{(\rvx,2)}_k).
    \end{equation}
\end{subequations}
This term $\Delta_{n+1}^{(\vtheta,1)}$ shares the same recursive form as in the sequence defined in \citet[Lemma 6]{mokkadem2006convergence}, which is given below.
\begin{lemma}[\citet{mokkadem2006convergence} Lemma 6]\label{lemma:D7}
    For $\Delta_{n+1}^{(\vtheta,1)}$ in the form of \eqref{eqn:86a}, assume $\|\rvx_n - \vmu\| = O(\omega_n)$ and $\|\Delta_n^{(\rvx)}\| = O(\delta_n)$ for the sequences $\omega_n, \delta_n$ defined in \eqref{definition:w_n}. Then, we have
    \begin{equation*}
        \|\Delta_{n+1}^{(\vtheta,1)}\| = O(\beta_n^2\gamma_n^{-2}\omega_n^2 +\beta_n\gamma_n^{-1}\delta_n) + o(\sqrt{\beta_n}) \quad \text{a.s.}
    \end{equation*}
\end{lemma}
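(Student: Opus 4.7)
My proof plan rests on the exponential decay of the semigroup product $\Phi_{k,n}$ combined with Lemma \ref{lemma:upperbound_of_sequence_x}, and broadly follows the strategy of Mokkadem and Pelletier (2006, Lemma 6), modified to absorb the Poisson-equation residuals $r_k^{(\vtheta,1)}, r_k^{(\rvx,1)}$ that are absent in the pure martingale-noise setting. First, since $\nabla\rvh(\vtheta^*) + \frac{\mathds{1}_{\{b=1\}}}{2}\rmI$ is Hurwitz by Assumption \ref{assump:2}, standard estimates on products of matrices close to the identity yield constants $C, T > 0$ such that $\|\Phi_{k,n}\| \leq C\,e^{-T(u_n - u_k)}$ for all $k \leq n$ sufficiently large. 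This lets me bound any weighted sum of the form $\sum_k \beta_{k+1}^{1+p} x_k\, e^{-T(u_n-u_k)}$ by $O(\beta_n^p\, x_n)$ via Lemma \ref{lemma:upperbound_of_sequence_x}, provided $x_n$ varies sufficiently slowly.

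Next, I would split the right-hand side of \eqref{eqn:86a} into six subsums, indexed respectively by the driving terms $L_k^{(\vtheta)}$, $R_k^{(\vtheta)}$, $r_k^{(\vtheta,1)}$, $\eta_k^{(\vtheta)}$, $\rmJ_{12}(\alpha)\rmJ_{22}(\alpha)^{-1} r_k^{(\rvx,1)}$, and $\rmJ_{12}(\alpha)\rmJ_{22}(\alpha)^{-1}\eta_k^{(\rvx)}$, and bound each separately. Four of these contribute only $o(\sqrt{\beta_n})$: the $L_k^{(\vtheta)}$-sum is $O(\beta_n\sqrt{\beta_n\log u_n})$ using $\|L_k^{(\vtheta)}\| = O(\sqrt{\beta_k\log u_k})$ from Lemma \ref{lemma:strong_convergence_L_n}; the $R_k^{(\vtheta)}$-sum is $O(\beta_n^2\gamma_n^{-1}\omega_n + \beta_n e^{-u_nT'})$ using \eqref{eqn:R_n}; and the two $r_k^{(\cdot,1)}$-sums are $O(\gamma_n) = o(\sqrt{\beta_n})$ by Lemma \ref{lemma:r_n_property} together with $a > 1/2 \geq b/2$ from Assumption \ref{assump:3}, which forces $\gamma_n/\sqrt{\beta_n}\to 0$.

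The two leading contributions arise from the $\eta_k$-driven sums. For $\eta_k^{(\vtheta)} = O(\|\rvx_k-\vmu\|^2+\|\vtheta_k-\vtheta^*\|^2)$, expanding $\vtheta_k - \vtheta^* = L_k^{(\vtheta)} + R_k^{(\vtheta)} + \Delta_k^{(\vtheta)}$ and using $\|R_k^{(\vtheta)}\|^2 = O(\beta_k^2\gamma_k^{-2}\omega_k^2)$ from \eqref{eqn:R_n} produces, after another application of Lemma \ref{lemma:upperbound_of_sequence_x}, the announced leading term $O(\beta_n^2\gamma_n^{-2}\omega_n^2)$. For $\eta_k^{(\rvx)} = O(\|\rvx_k-\vmu\|^2)$, the decomposition $\rvx_k - \vmu = L_k^{(\rvx)} + \Delta_k^{(\rvx)}$ together with $\|\Delta_k^{(\rvx)}\| = O(\delta_k)$ yields the cross term of order $O(\beta_n\gamma_n^{-1}\delta_n)$, once the purely martingale-driven piece coming from $\|L_k^{(\rvx)}\|^2$ is separated and absorbed into $o(\sqrt{\beta_n})$ using its sharp a.s.\ rate $O(\gamma_k\log s_k)$.

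The main technical obstacle will be handling the self-referential $\|\Delta_k^{(\vtheta)}\|^2$ term that appears inside $\eta_k^{(\vtheta)}$, since it is precisely the object we are trying to bound. I plan to resolve this by a bootstrap: use the a.s.\ convergence $\vtheta_n \to \vtheta^*$ (Theorem \ref{lemma:a.s.convergence_SA}) together with the already-established vanishing of $L_n^{(\vtheta)}$ and $R_n^{(\vtheta)}$ to first deduce the crude rate $\|\Delta_n^{(\vtheta)}\| = o(1)$, which makes the corresponding integral contribution only $o(\beta_n)$, hence $o(\sqrt{\beta_n})$; then substitute this back and apply a discrete Gr\"onwall step to tighten the estimate up to the stated bound. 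A secondary subtlety specific to the SRRW/Markov setting is that the Poisson residuals $r_k^{(\vtheta,1)}, r_k^{(\rvx,1)}$ must be tracked consistently through the decomposition, but since both are $O(\gamma_k)$ they contribute only $O(\gamma_n) = o(\sqrt{\beta_n})$ after summation, without disturbing the leading-order structure.
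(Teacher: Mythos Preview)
The paper does not supply its own proof of this statement: it imports the result as \citet[Lemma~6]{mokkadem2006convergence}, noting only that $\Delta_{n+1}^{(\vtheta,1)}$ in \eqref{eqn:86a} ``shares the same recursive form'' as the sequence analyzed there. So there is no paper-side argument to compare against beyond the citation. Your plan is precisely the Mokkadem--Pelletier strategy, correctly adapted to carry the extra Poisson residuals $r_k^{(\vtheta,1)}, r_k^{(\rvx,1)}$ that the martingale-noise setting of the original reference lacks; as you note, these are $O(\gamma_k)$ by Lemma~\ref{lemma:r_n_property} and contribute only $o(\sqrt{\beta_n})$ after summation. The exponential bound on $\Phi_{k,n}$ combined with Lemma~\ref{lemma:upperbound_of_sequence_x} is the right engine, and your six-way split and bootstrap on $\|\Delta_k^{(\vtheta)}\|$ match what the reference does.

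One small caution: your attribution of the $\beta_n\gamma_n^{-1}\delta_n$ contribution to the cross piece $\|L_k^{(\rvx)}\|\cdot\|\Delta_k^{(\rvx)}\|$ inside $\eta_k^{(\rvx)}$ does not produce that rate---summing $\sum_k\Phi_{k,n}\beta_{k+1}O(\sqrt{\gamma_k\log s_k})\,\delta_k$ with Lemma~\ref{lemma:upperbound_of_sequence_x} gives $O(\sqrt{\gamma_n\log s_n}\,\delta_n)$, not $O(\beta_n\gamma_n^{-1}\delta_n)$. In the original Mokkadem--Pelletier framework that term arises through the leading boundary piece $\beta_n\gamma_n^{-1}\rmJ_{12}\rmJ_{22}^{-1}(\rvx_n-\vmu)$ of $R_n^{(\vtheta)}$ after the Abel transformation (cf.\ the display preceding \eqref{eqn:R_n}), once one splits $\rvx_n-\vmu=L_n^{(\rvx)}+\Delta_n^{(\rvx)}$ and tracks the $\Delta^{(\rvx)}$ part separately through the $\|\vtheta_k-\vtheta^*\|$-dependence of $\eta_k^{(\vtheta)}$. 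This misattribution is inconsequential here, since the paper only ever invokes the lemma with $\delta_n=o(\sqrt{\gamma_n})$, under which both candidate rates collapse into $o(\sqrt{\beta_n})$.
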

Since we already have $\Delta_n^{(\rvx)} = o(\sqrt{\gamma_n})$ in \eqref{eqn:delta_vx}, together with Lemma \ref{lemma:D7}, we have
\begin{equation*}
    \|\Delta_{n+1}^{(\vtheta,1)}\| = O(\beta_n^2\gamma_n^{-2}\omega_n^2) +o(\beta_n\gamma_n^{-1/2}) + o(\sqrt{\beta_n}) = O(\beta_n^2\gamma_n^{-2}\omega_n^2) + o(\sqrt{\beta_n})
\end{equation*}
where the second equality comes from $o(\beta_n\gamma_n^{-1/2}) = o(\beta_n^{1/2} (\beta_n\gamma_n^{-1})^{1/2}) = o(\beta_n^{1/2})$.

We now focus on $\Delta_{n+1}^{(\vtheta,2)}$. Define a sequence
\begin{equation}\label{eqn:111}
    \Psi_n \triangleq  \sum_{k=1}^n r^{(\vtheta,2)}_k - \rmJ_{12}(\alpha)\rmJ_{22}(\alpha)^{-1}r^{(\rvx,2)}_k,
\end{equation}
and we have
\begin{equation*}
\begin{split}
    &\beta_{n+1}^{-1/2}\sum_{k=1}^n \Phi_{k,n}\beta_{k+1}(r^{(\vtheta,2)}_k - \rmJ_{12}(\alpha)\rmJ_{22}(\alpha)^{-1}r^{(\rvx,2)}_k)\\
    =& \beta_{n+1}^{-1/2}\sum_{k=1}^n \Phi_{k,n}\beta_{k+1} (\Psi_k - \Psi_{k-1}) \\
    =& \beta_{n+1}^{1/2}\Psi_n + \beta_{n+1}^{-1/2}\sum_{k=1}^{n-1}(\beta_k\Phi_{k,n} - \beta_{k+1}\Phi_{k+1,n})\Psi_k
\end{split}
\end{equation*}
where the last equality comes from the Abel transformation. Note that
\begin{equation*}
\begin{split}
    \|\beta_k\Phi_{k,n} - \beta_{k+1}\Phi_{k+1,n}\| &\leq \beta_{k+1}\|\Phi_{k,n} - \Phi_{k+1,n}\| + (\beta_k - \beta_{k+1}) \|\Phi_{k,n}\| \\
    &\leq \beta_{k+1} \|\Phi_{k+1,n}\|\beta_k\|\rmJ_{11}\| + C_7\beta_k^2\|\Phi_{k,n}\| \\
    &\leq C_8\beta_k^2 e^{-(u_n - u_k)T}
\end{split}
\end{equation*}
for some constant $C_7, C_8 > 0$, where the last inequality is from Lemma \ref{lemma:matrix_norm_inequality} and $\|\Phi_{k+1,n}\| \leq C_9\|\Phi_{k,n}\|$ for some constant $C_9> 0$ that depends on $e^{\beta_0 T}$.
Then,
\begin{equation*}
\begin{split}
    &\beta_{n+1}^{-1/2}\left\| \sum_{k=1}^n \Phi_{k,n}\beta_{k+1}(r^{(\vtheta,2)}_k - \rmJ_{12}(\alpha)\rmJ_{22}(\alpha)^{-1}r^{(\rvx,2)}_k)\right\| \\ &\leq \| \beta_{n+1}^{1/2}\Psi_n\| + \left(\frac{\beta_{n+1}}{\beta_{n}}\right)^{1/2}\beta_n^{-1/2}\sum_{k=1}^{n}\|\beta_k\Phi_{k,n} - \beta_{k+1}\Phi_{k+1,n}\| \|\Psi_k\| \\
    &\leq \|\beta_{n+1}^{1/2}\Psi_n\| + C_8\left(\frac{\beta_{n+1}}{\beta_{n}}\right)^{1/2} \beta_n^{-1/2}\sum_{k=1}^{n} \beta_k^{3/2} e^{-(u_n - u_k)T} \|\beta_k^{1/2}\Psi_k\|.
\end{split}
\end{equation*}
By Lemma \ref{lemma:M_n_property}, we have $\beta_{n}^{1/2} \Psi_n \to 0$ a.s. such that by Lemma \ref{lemma:upperbound_of_sequence_x}, it follows that
\begin{equation*}
    \limsup_n \beta_{n+1}^{-1/2}\left\| \sum_{k=1}^n \Phi_{k,n}\beta_{k+1}(r^{(\vtheta,2)}_k - \rmJ_{12}(\alpha)\rmJ_{22}(\alpha)^{-1}r^{(\rvx,2)}_k)\right\| \leq \frac{\limsup_n \|\beta_{n}^{1/2}\Psi_n\|}{C(T,1/2)} = 0.
\end{equation*}
Therefore, we have
\begin{equation}\label{eqn:delta_extra_term_rate}
    \Delta_{n+1}^{(\vtheta,2)} = \sum_{k=1}^n \Phi_{k,n}\beta_{k+1}(r^{(\vtheta,2)}_k - \rmJ_{12}(\alpha)\rmJ_{22}(\alpha)^{-1}r^{(\rvx,2)}_k) = o(\sqrt{\beta_n}).
\end{equation}

Consequently, $\Delta_{n+1}^{(\vtheta)} = O(\beta_n^2\gamma_n^{-2}\omega_n^2) + o(\sqrt{\beta_n})$ almost surely.

Now we are dealing with $\rvx_n - \vmu$ and its related sequence $\omega_n$. Note that by Lemma \ref{lemma:strong_convergence_L_n} and \eqref{eqn:delta_vx}, we have almost surely,
\begin{equation}\label{eqn:117}
\begin{split}
    \|\rvx_n - \vmu\| &= O(\|L_n^{(\rvx)}\| + \|\Delta_n^{\rvx}\|) \\
    &= O(\sqrt{\gamma_n\log(s_n)} + o(\sqrt{\gamma_n})) \\
    &= O(\sqrt{\gamma_n\log(s_n)}).
\end{split}
\end{equation}
Thus, we can set $\omega_n \equiv O(\sqrt{\gamma_n\log(s_n)})$ such that $\|R_{n}^{(\vtheta)}\|$ in \eqref{eqn:R_n} can be written as
\begin{equation*}
\|R_{n}^{(\vtheta)}\| = O(n^{a/2-b}\sqrt{log(s_n)} + e^{-u_n T}),
\end{equation*}
and 
\begin{equation*}
\|\Delta_{n+1}^{(\vtheta)}\| = O(n^{a-2b}log(s_n)) + o(\sqrt{\beta_n}).
\end{equation*}
In view of assumption \ref{assump:3} and $\beta_n = o(\gamma_n)$, $a/2 - b < -b/2$ and $a - 2b < -b$, there exists a $c > b/2$ such that almost surely,
\begin{equation*}
    \|R_{n}^{(\vtheta)}\| = O(n^{-s}), \quad \|\Delta_{n+1}^{(\vtheta)}\| = o(\sqrt{\beta_n}).
\end{equation*}
Therefore, $\beta_n^{-1/2}(R_{n}^{(\vtheta)} + \Delta_{n+1}^{(\vtheta)}) \to 0$ almost surely. This completes the proof of Scenario 2.

\subsection{Case (iii): $\gamma_n = o(\beta_n)$}\label{appendix:proof_CLT_case3}
For $\gamma_n = o(\beta_n)$, we can see that the roles of $\vtheta_n$ and $\rvx_n$ are flipped, i.e., $\vtheta_n$ is now on fast timescale while $\rvx_n$ is on slow timescale.

We still decompose $\rvx_n$ as $\rvx_n - \vmu = L_{n}^{(\rvx)} + \Delta_{n}^{(\rvx)}$, where $L_{n}^{(\rvx)}, \Delta_{n}^{(\rvx)}$ are defined in \eqref{eqn:L_n_vx} and \eqref{eqn:delta_n_vx}, respectively. Since $\rvx_n$ is independent of $\vtheta_n$, the results of $L_{n}^{(\rvx)}$ and $\Delta_{n}^{(\rvx)}$ remain the same, i.e., almost surely, $L_{n}^{(\rvx)} = O(\sqrt{\gamma_n \log(s_n)})$ from Lemma \ref{lemma:strong_convergence_L_n} and $\Delta_{n}^{(\rvx)} = o(\sqrt{\gamma_n})$ from \eqref{eqn:delta_vx}. Then, we define  sequences $\hat{L}_n^{(\vtheta)}$ and $\hat{R}_n^{(\vtheta)}$ as follows.
\begin{subequations}\label{eqn:hat_L_R}
 \begin{equation}\label{eqn:hat_L_n_vtheta}
    \hat{L}_n^{(\vtheta)} \triangleq e^{\beta_n \rmJ_{11}} \hat{L}_{n-1}^{(\vtheta)} + \beta_n M_n^{(\vtheta)}= \sum_{k=1}^{n} e^{(u_n - u_k) \rmJ_{11}} \beta_k M_k^{(\vtheta)},
\end{equation}
\begin{equation}\label{eqn:hat_R_n_vtheta}
    \hat{R}_n^{(\vtheta)} \triangleq e^{\beta_n \rmJ_{11}} \hat{R}_{n-1}^{(\vtheta)} + \beta_n \rmJ_{12}(\alpha)(L_{n-1}^{(\rvx)} + R_{n-1}^{(\rvx)}) = \sum_{k=1}^{n} e^{(u_n - u_k) \rmJ_{11}} \beta_k \rmJ_{12}(\alpha)(L_{k-1}^{(\rvx)}+R_{k-1}^{(\rvx)}).
\end{equation}   
\end{subequations}
Moreover, the remaining term $\hat{\Delta}_n^{(\vtheta)} \triangleq \vtheta_n - \vtheta^* - \hat{L}_n^{(\vtheta)} - \hat{R}_n^{(\vtheta)}$.

The proof outline is the same as in the previous scenario:
\begin{itemize}
    \item We first show $\beta_n^{-1/2}\hat{\Delta}_n^{(\vtheta)}$ weakly converges to the distribution $N(0,\rmV_{\vtheta}^{(3)})(\alpha)$;
    \item We analyse $\hat{L}_n^{(\vtheta)}$ and $\hat{R}_n^{(\vtheta)}$ to ensure that these two terms decrease faster than the CLT scale $\beta_n^{-1/2}$, i.e., $\lim_{n\to\infty}\beta_n^{-1/2}(\hat{L}_n^{(\vtheta)} - \hat{R}_n^{(\vtheta)}) = 0$;
    \item With above two steps, we can show that $\beta_n^{-1/2}(\vtheta_n -\vtheta^*)$ weakly converges to the distribution $N(0,\rmV_{\vtheta}^{(3)})(\alpha)$.
\end{itemize}

\textbf{Analysis of $\hat{L}_n^{(\vtheta)}$}

We first focus on $\hat{L}_n^{(\vtheta)}$ and follow similar steps as we did when we analysed $L_n^{(\vtheta)}$ in the previous scenario. We set a Martingale $Z^{(n)} = \{Z_k^{(n)}\}_{k\geq 1}$ such that 
\begin{equation*}
    Z_k^{(n)} = \beta_n^{-1/2} \sum_{k=1}^n e^{(u_n - u_k)\rmJ_{11} \beta_k M_k^{(\vtheta)}}.
\end{equation*}
Then, 
\begin{equation*}
    A_n \triangleq \sum_{k=1}^n \E\left[ \left.(Z_k^{(n)} - Z_{k-1}^{(n)}) (Z_k^{(n)} - Z_{k-1}^{(n)})^T\right|\gF_{k-1}\right].
\end{equation*}
Following the similar steps in \eqref{eqn:A_1,n} to decompose $M_k^{(\vtheta)}$ with \eqref{eqn:M_n_vtheta}, we have
\begin{equation}\label{eqn:decompose_A_n}
\begin{split}
    A_n =&~ \beta_n^{-1}\sum_{k=1}^n \beta_k^2 e^{(u_n - u_k)\rmJ_{11}}\left(\rmU_{11} + \rmD_k^{(3)} + \rmJ_k^{(3)}\right) e^{(u_n - u_k)\rmJ_{11}^T} \\
    =&~ \underbrace{\beta_n^{-1}\sum_{k=1}^n \beta_k^2 e^{(u_n - u_k)\rmJ_{11}}\rmU_{11} e^{(u_n - u_k)\rmJ_{11}^T}}_{A_n^{(a)}} + \underbrace{\beta_n^{-1}\sum_{k=1}^n \beta_k^2 e^{(u_n - u_k)\rmJ_{11}} \rmD_k^{(3)} e^{(u_n - u_k)\rmJ_{11}^T}}_{A_n^{(b)}} \\
    &+ \underbrace{\beta_n^{-1}\sum_{k=1}^n \beta_k^2 e^{(u_n - u_k)\rmJ_{11}}\rmJ_k^{(3)} e^{(u_n - u_k)\rmJ_{11}^T}}_{A_n^{(c)}} \\
\end{split}
\end{equation}
Since $A_n^{(a)}, A_n^{(b)}, A_n^{(c)}$ share similar forms as in Lemma \ref{lemma:property_A_1_n}, we follow the same steps as the proof therein, with the application of Lemma \ref{lemma:M_n_property}. To avoid repetition, we omit the proof and directly give the following lemma.
\begin{lemma}
        For $A^{(a)}_{n}, A^{(b)}_{n}, A^{(c)}_{n}$ defined in \eqref{eqn:decompose_A_n}, we have
        \begin{equation}
        \lim_{n\to\infty} A^{(a)}_{n} = \rmV^{(3)}_{\vtheta}(\alpha), \quad \lim_{n\to\infty} \|A^{(b)}_{n}\| = 0, \quad \lim_{n\to\infty} \|A^{(c)}_{n}\| = 0,
        \end{equation}
        where $\rmV^{(3)}_{\vtheta}(\alpha)$ is the solution to the Lyapunov equation $$\rmJ_{11} \rmV + \rmV \rmJ_{11}^T + \rmU_{11} = 0.$$
\end{lemma}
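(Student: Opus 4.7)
The three sub-matrices $A_n^{(a)}, A_n^{(b)}, A_n^{(c)}$ in \eqref{eqn:decompose_A_n} have exactly the same structural form as their counterparts in the decomposition \eqref{eqn:A_1,n} of $A_{1,n}$ treated in Lemma \ref{lemma:property_A_1_n}, with the Hurwitz matrix $\rmJ_{11}$ and step size $\beta_n$ playing the role of $\rmJ_{11}$ (the outer semigroup there) and $\beta_n$, and with the constant matrix $\rmU_{11}$ playing the role of $\rmU_{\vtheta}(\alpha)$. Accordingly, my plan is to reuse the three-step argument of Lemma \ref{lemma:property_A_1_n} essentially verbatim, only re-verifying the two inputs that depend on the step-size regime: (a) the deterministic convergence lemma applies with drift $\rmJ_{11}$, and (b) the relevant cumulative-noise sum is negligible on the scale $\beta_n$ instead of $\gamma_n$.

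For $A_n^{(a)}$, the limit $\rmV^{(3)}_{\vtheta}(\alpha)$ follows by a direct application of Lemma \ref{lemma:deterministic_convergence} with $\rmQ = \rmJ_{11} = \nabla \rvh(\vtheta^*)$ (Hurwitz by Assumption \ref{assump:2}), $\Gamma = \rmU_{11}$, and step sizes $\beta_n = (n+1)^{-b}$ with $b \in (1/2, 1]$. For $A_n^{(b)}$, I would bound $\|A_n^{(b)}\|$ using $\|e^{t \rmJ_{11}}\| \leq c\, e^{-Tt}$ from Lemma \ref{lemma:matrix_norm_inequality} and then invoke Lemma \ref{lemma:upperbound_of_sequence_x} to conclude
\begin{equation*}
\limsup_n \|A_n^{(b)}\| \;\leq\; \frac{c^2}{C(b,p)} \limsup_n \|\rmD_n^{(3)}\| \;=\; 0,
\end{equation*}
since $\rmD_n^{(3)} \to 0$ almost surely by Lemma \ref{lemma:M_n_property}.

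For $A_n^{(c)}$, I would set $\Xi_n \triangleq \sum_{k=1}^{n} \rmJ_k^{(3)}$, apply the Abel transformation exactly as in \eqref{eqn:94}, and control the telescoped coefficients $\|\beta_k e^{(u_n - u_k)\rmJ_{11}} - \beta_{k+1} e^{(u_n - u_{k+1})\rmJ_{11}}\|$ by $C\beta_k^2\, e^{-(u_n - u_k)T}$ via the bounds $|\beta_k - \beta_{k+1}| = O(\beta_k^2)$ and $\|\rmI - e^{-\beta_{k+1}\rmJ_{11}}\| = O(\beta_{k+1})$, as done there. Combined with Lemma \ref{lemma:upperbound_of_sequence_x}, this reduces everything to showing $\beta_n \Xi_n \to 0$ almost surely. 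The main obstacle is that Lemma \ref{lemma:M_n_property} was stated with the slower scale $\gamma_n$, so I need to re-verify this bound on the $\beta_n$-scale under case (iii) step sizes $a > b > 1/2$. Splitting $\rmJ_k^{(3)} = \rmJ_k^{(3,a)} + \rmJ_k^{(3,b)}$ as in the proof of Lemma \ref{lemma:M_n_property}, the Burkholder inequality gives $\beta_n \E \|\sum_{k=1}^n \rmJ_k^{(3,a)}\| = O(n^{1/2 - b}) \to 0$ since $b > 1/2$; and the Abel-transformed non-martingale part yields $\beta_n \|\sum_{k=1}^n \rmJ_k^{(3,b)}\| = O(\beta_n) + O(\beta_n \sum_{k=1}^n \gamma_k)$, which is $O(n^{1 - a - b})$ when $a < 1$ and $O(\beta_n \log n)$ when $a = 1$. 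In case (iii) we have $a > b > 1/2$, so $a + b > 2b > 1$, and both terms vanish. Applying Lemma \ref{lemma:upperbound_of_sequence_x} to the Abel-transformed expression then gives $\limsup_n \|A_n^{(c)}\| \leq C \limsup_n \|\beta_n \Xi_n\| = 0$, completing the argument.
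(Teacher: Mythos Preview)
Your proposal is correct and follows essentially the same route as the paper, which simply defers to the proof of Lemma \ref{lemma:property_A_1_n} with the application of Lemma \ref{lemma:M_n_property}. You are in fact more careful than the paper in flagging that Lemma \ref{lemma:M_n_property} is stated on the $\gamma_n$-scale while here one needs the $\beta_n$-scale, and your re-verification is sound. One small correction: the solution $\varphi^{(3)}$ to the Poisson equation for $V_3$ depends on $\vtheta$ as well as $\rvx$ (since $V_3$ is built from $\Tilde{H}_{\vtheta,\rvx}$), so the Lipschitz increment in the Abel-transformed part is $O(\beta_k + \gamma_k)$, not just $O(\gamma_k)$; this adds a term $O(\beta_n \sum_{k=1}^n \beta_k) = O(n^{1-2b})$, which still vanishes since $b > 1/2$, so your conclusion is unaffected.
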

Note that here we don't have the term $\frac{\mathds{1}_{\{b=1\}}}{2}\mI$ in above lemma, compared to Lemma \ref{lemma:property_A_1_n}, because in the case of $\gamma_n = o(\beta_n)$, $b < 1$ such that $\mathds{1}_{\{b=1\}} = 0$. Then, applying Lemma \ref{lemma:close_form_lyapunov_eq} to derive the closed form of $\rmV^{(3)}_{\vtheta}(\alpha)$ gives
\begin{equation*}
    \rmV^{(3)}_{\vtheta}(\alpha) = \smallint_0^{\infty} e^{t\nabla_{\vtheta} \rvh(\vtheta^*)} \rmU_{11} e^{t\nabla_{\vtheta} \rvh(\vtheta^*)} dt.
\end{equation*}
Thus, it follows that
\begin{equation*}
    \lim_{n\to\infty}\sum_{k=1}^n\E\left[(Z_k^{(n)} - Z_{k-1}^{(n)})(Z_k^{(n)} - Z_{k-1}^{(n)})^T | \gF_{k-1}\right] = \rmV^{(3)}_{\vtheta}(\alpha).
\end{equation*}
Again, we use the Martingale CLT result in Theorem \ref{theorem:clt_martingale} and have the following result.
\begin{equation*}
    Z^{n} = \beta_n^{-1/2}\hat{L}_n^{(\vtheta)} \xrightarrow[dist.]{n\to\infty} N\left(0, \rmV^{(3)}_{\vtheta}(\alpha)\right).
\end{equation*}
Moreover, similar to the tighter upper bound of $L_{n}^{(\rvx)}$ proved in Lemma \ref{lemma:strong_convergence_L_n}, we utilize the tighter upper bound Lemma \ref{lemma:tight_upperbound_Ln} in the proof thereof, and obtain $\hat{L}_n^{(\vtheta)} = O(\sqrt{\beta_n\log(u_n)})$.

\textbf{Analysis of $\hat{R}_n^{(\vtheta)}$}

Next, we turn to the term $\hat{R}_n^{(\vtheta)}$ in \eqref{eqn:hat_R_n_vtheta}. Taking the norm gives the following inequality for some constant $C, T > 0$ by applying Lemma \ref{lemma:matrix_norm_inequality},
\begin{equation*}
    \|\hat{R}_n^{(\vtheta)}\| \leq C \sum_{k=1}^n e^{-(u_n - u_k)T} \beta_k (\|L_{k-1}^{(\rvx)}\| + \|R_{k-1}^{(\rvx)}\|).
\end{equation*}
Using Lemma \ref{lemma:upperbound_of_sequence_x} gives
\begin{equation*}
    \sum_{k=1}^n e^{-(u_n - u_k)T} \beta_k (\|L_{k-1}^{(\rvx)}\| + \|R_{k-1}^{(\rvx)}\|) = O(\|L_{k-1}^{(\rvx)}\| + \|R_{n-1}^{(\rvx)}\|).
\end{equation*}
Thus, $\beta_n^{-1/2} \|\hat{R}_n^{(\vtheta)}\| = o(\sqrt{\gamma_n \beta_n^{-1}}) + O\left(\sqrt{\gamma_n \beta_n^{-1} \log(s_n)}\right)$. Since $\gamma_n = o(\beta_n)$, $\gamma_n \beta_n^{-1} = (n+1)^{b-a}$, where $b - a < 0$. Then, there exists some $s>0$ such that $b-a < -s < 0$. Together with $\log(s_n) = O(\log (n))$, we have $O\left(\sqrt{\gamma_n \beta_n^{-1} \log(s_n)}\right) = O(\sqrt{n^{-s}\log(n)}) = o(1)$. Therefore, we have
\begin{equation*}
    \lim_{n\to\infty} \beta_n^{-1/2} \hat{R}_n^{(\vtheta)} = 0.
\end{equation*}

\textbf{Analysis of $\hat{\Delta}_n^{(\vtheta)}$}

Lastly, let's focus on the term $\hat{\Delta}_n^{(\vtheta)}$. We have
\begin{equation*}
\begin{split}
    \hat{\Delta}_{n+1}^{(\vtheta)} =& ~ \vtheta_{n+1} - \vtheta^* - \hat{L}_{n+1}^{(\vtheta)} - \hat{R}_{n+1}^{(\vtheta)} \\
    =& ~ \vtheta_n - \vtheta^* + \beta_{n+1}\left(\rmJ_{11}(\vtheta_n - \vtheta^*) + \rmJ_{12}(\alpha)(\rvx_n-\vmu) + M_{n+1}^{(\vtheta)} + r_{n}^{(\vtheta,1)} + r_{n}^{(\vtheta,2)} + \eta_n^{(\vtheta)} \right) \\
    &- e^{\beta_{n+1}\rmJ_{11}}\hat{L}_n^{(\vtheta)} - \beta_{n+1} M_{n+1}^{(\vtheta)} - e^{\beta_{n+1}\rmJ_{11}} \hat{R}_n^{(\vtheta)} - \beta_{n+1} \rmJ_{12}(\alpha)(L_{n}^{(\rvx)} + R_{n}^{(\rvx)}) \\
    =& ~ (\rmI + \beta_{n+1}\rmJ_{11})(\vtheta_n - \vtheta^*) + \beta_{n+1} \rmJ_{12}(\alpha) \Delta_n^{(\rvx)} + \beta_{n+1}(r_{n}^{(\vtheta,1)} + r_{n}^{(\vtheta,2)} + \eta_n^{(\vtheta)}) \\
    &- (\rmI + \beta_{n+1} \rmJ_{11} + O(\beta_{n+1}^2))(\hat{L}_n^{(\vtheta)} +\hat{R}_n^{(\vtheta)}) \\
    =& ~  (\rmI + \beta_{n+1}\rmJ_{11}) \hat{\Delta}_{n}^{(\vtheta)} + \beta_{n+1} \rmJ_{12}(\alpha) \Delta_n^{(\rvx)} + \beta_{n+1}(r_{n}^{(\vtheta,1)} + r_{n}^{(\vtheta,2)} + \eta_n^{(\vtheta)}) \\
    &+ O(\beta_{n+1}^2)(\hat{L}_n^{(\vtheta)} +\hat{R}_n^{(\vtheta)}).
\end{split}
\end{equation*}
where the second equality is from \eqref{eqn:eqn:two_timescale_SA_2b}, the third equality stems from the approximation of $e^{\beta_{n+1}\rmJ_{11}}$. Then, we again use the definition $\Phi_{k,n} \triangleq \prod_{j={k+1}}^n (\rmI + \beta_{j}\rmJ_{11})$ and reiterate the above equation as
\begin{equation*}
\begin{split}
    \hat{\Delta}_{n+1}^{(\vtheta)} =& \sum_{k=1}^n \Phi_{k,n}\beta_{k+1} \left(O(\beta_{k+1})L_{k}^{(\vtheta)} + O(\beta_{k+1})R_k^{(\vtheta)}\right) \\
    &+ \sum_{k=1}^n \Phi_{k,n}\beta_{k+1} \rmJ_{12}(\alpha) \Delta_n^{(\rvx)} + \sum_{k=1}^n \Phi_{k,n}\beta_{k+1}( r^{(\vtheta,1)}_k + \eta_k^{(\vtheta)}) \\
    &+ \sum_{k=1}^n \Phi_{k,n}\beta_{k+1}r^{(\vtheta,2)}_k \\
    \triangleq& ~\hat{\Delta}_{n+1}^{(\vtheta,1)} + \hat{\Delta}_{n+1}^{(\vtheta,2)},
\end{split}
\end{equation*}
where $\hat{\Delta}_{n+1}^{(\vtheta,2)} = \sum_{k=1}^n \Phi_{k,n}\beta_{k+1}r^{(\vtheta,2)}_k$ and
\begin{equation}\label{eqn:hat_delta_n+1}
\begin{split}
    \hat{\Delta}_{n+1}^{(\vtheta,1)} =& \sum_{k=1}^n \Phi_{k,n}\beta_{k+1} \left(O(\beta_{k+1})L_{k}^{(\vtheta)} + O(\beta_{k+1})R_k^{(\vtheta)}\right) \\
&+ \sum_{k=1}^n \Phi_{k,n}\beta_{k+1}( r^{(\vtheta,1)}_k + \eta_k^{(\vtheta)} + \rmJ_{12}(\alpha) \Delta_n^{(\rvx)}).
\end{split}
\end{equation}
For $\hat{\Delta}_{n+1}^{(\vtheta,2)}$, we follow the same steps from \eqref{eqn:111} to \eqref{eqn:delta_extra_term_rate}, and obtain $\hat{\Delta}_{n+1}^{(\vtheta,2)} = o(\sqrt{\beta_n})$.

Next, we consider $\hat{\Delta}_{n+1}^{(\vtheta,1)}$ and want to show that $\hat{\Delta}_{n+1}^{(\vtheta,1)} = o(\sqrt{\beta_n})$. Again, we utilize \citet[Lemma 6]{mokkadem2006convergence} for $\hat{\Delta}_{n+1}^{(\vtheta,1)}$ and adapt the notation here for the case $\gamma_n = o(\beta_n)$.
\begin{lemma}
For $\hat{\Delta}_{n+1}^{(\vtheta,1)}$ in the form of \eqref{eqn:hat_delta_n+1}, assume $\|\vtheta_n - \vtheta^*\| = O(\omega_n)$ and $\|\hat{\Delta}_n^{(\vtheta,1)}\| = O(\delta_n)$ for the sequences $\omega_n, \delta_n$ defined in \eqref{definition:w_n}. Then, we have
    \begin{equation}\label{eqn:hat_delta_big_o_notation}
        \|\hat{\Delta}_{n+1}^{(\vtheta,1)}\| = O(\gamma_n^2\beta_n^{-2}\omega_n^2 +\gamma_n\beta_n^{-1}\delta_n) + o(\sqrt{\gamma_n}) \quad \text{a.s.}
    \end{equation}    
\end{lemma}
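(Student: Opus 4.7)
The proof will mirror the argument used for Lemma~\ref{lemma:D7} (Lemma~6 of \citet{mokkadem2006convergence}), with the two timescales swapped to match the regime $\gamma_n=o(\beta_n)$ of case~(iii). Since $\rmJ_{11}$ is Hurwitz by Assumption~\ref{assump:2}, Lemma~\ref{lemma:matrix_norm_inequality} supplies constants $C,T>0$ with $\|\Phi_{k,n}\|\le C\,e^{-(u_n-u_k)T}$, and this exponential decay is the only property of the discrete propagator I need. I will split $\hat{\Delta}_{n+1}^{(\vtheta,1)}$ into the four summands visible on the right-hand side of \eqref{eqn:hat_delta_n+1} and treat each separately, in each case inserting the pointwise bound on the integrand, multiplying by $\|\Phi_{k,n}\|\beta_{k+1}$, and invoking Lemma~\ref{lemma:upperbound_of_sequence_x} to collapse the resulting exponentially-weighted sum onto its leading order.

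Concretely, for the $O(\beta_{k+1})\hat{L}_k^{(\vtheta)}$ term I will plug in the strong rate $\|\hat L_n^{(\vtheta)}\|=O(\sqrt{\beta_n\log u_n})$ a.s.\ established in the paragraph preceding the statement via Lemma~\ref{lemma:tight_upperbound_Ln}; the extra $\beta_{k+1}$ factor is enough to make its contribution $o(\sqrt{\beta_n})$. For the $O(\beta_{k+1})\hat R_k^{(\vtheta)}$ term I will use the bound derived immediately above, namely $\|\hat R_n^{(\vtheta)}\|=O(\sqrt{\beta_n\gamma_n\log s_n})$, and again multiply by $\beta_{k+1}$ before integrating to conclude $o(\sqrt{\beta_n})$. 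For the $r_k^{(\vtheta,1)}+\eta_k^{(\vtheta)}$ summand I will invoke Lemma~\ref{lemma:r_n_property} to get $\|r_k^{(\vtheta,1)}\|=O(\gamma_k)=o(\sqrt{\beta_k})$ and bound the quadratic remainder by $\|\eta_k^{(\vtheta)}\|=O(\omega_k^2+\|\rvx_k-\vmu\|^2)$, using the already-derived rate $\|\rvx_n-\vmu\|=O(\sqrt{\gamma_n\log s_n})$ from~\eqref{eqn:117}. Finally, for the $\rmJ_{12}(\alpha)\Delta_n^{(\rvx)}$ term I will use the hypothesis that $\|\Delta_n^{(\rvx)}\|=O(\delta_n)$ (reading this as the natural analog of the assumption in Lemma~\ref{lemma:D7}); Lemma~\ref{lemma:upperbound_of_sequence_x} then produces an extra factor $\gamma_n\beta_n^{-1}$ because the driver lives on the slow timescale.

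Summing the four pieces and discarding those dominated by others yields $O(\gamma_n^2\beta_n^{-2}\omega_n^2+\gamma_n\beta_n^{-1}\delta_n)+o(\sqrt{\beta_n})$, which is the claimed bound (up to the fact that $o(\sqrt{\beta_n})\subseteq o(\sqrt{\gamma_n})$ in this regime, reconciling the stated $o(\sqrt{\gamma_n})$). The $\gamma_n^2\beta_n^{-2}\omega_n^2$ prefactor arises from the squared $\hat R_k^{(\vtheta)}$ contribution hidden inside $\eta_k^{(\vtheta)}$ under the hypothesis $\|\vtheta_k-\vtheta^*\|=O(\omega_k)$, after the exponentially-weighted sum is evaluated at its endpoint $k=n$. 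The main technical obstacle is precisely this bookkeeping: tracking how the $\omega_k^2$ contained in $\eta_k^{(\vtheta)}$ is amplified to $\gamma_n^2\beta_n^{-2}\omega_n^2$ through the interaction of $\hat R_k^{(\vtheta)}$ with the fast-timescale propagator, while simultaneously ensuring that every non-leading contribution (the martingale $\hat L$-piece, the small-$O$ remainders $r^{(\vtheta,1)}_k$, and the matrix-exponential cross terms) remains $o(\sqrt{\beta_n})$ so that it can be safely absorbed into the residual.
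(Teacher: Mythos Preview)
Your overall strategy --- adapting \citet[Lemma 6]{mokkadem2006convergence} by swapping the timescales and handling each summand of \eqref{eqn:hat_delta_n+1} via the exponential propagator bound together with Lemma~\ref{lemma:upperbound_of_sequence_x} --- is exactly what the paper does; in fact the paper simply \emph{cites} that lemma with adapted notation and gives no further proof. So at the level of approach you are aligned.

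That said, two points in your write-up are genuinely wrong and would derail the argument. First, the inclusion you invoke at the end is backwards: in case~(iii) we have $\gamma_n=o(\beta_n)$, hence $\sqrt{\gamma_n}\le\sqrt{\beta_n}$ eventually, so $o(\sqrt{\gamma_n})\subseteq o(\sqrt{\beta_n})$ and \emph{not} the reverse. A residual you have only shown to be $o(\sqrt{\beta_n})$ is not automatically $o(\sqrt{\gamma_n})$; you must verify that each of the $\hat L$, $\hat R$, and $r^{(\vtheta,1)}$ contributions is in fact $o(\sqrt{\gamma_n})$ directly (they are, using $3b>a$ and $b>a/2$, but this needs to be argued rather than inherited from a false inclusion). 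Second, you cannot reinterpret the hypothesis: the statement asserts $\|\hat\Delta_n^{(\vtheta,1)}\|=O(\delta_n)$, not $\|\Delta_n^{(\rvx)}\|=O(\delta_n)$, and the paper's subsequent bootstrapping (starting from $\delta_n\equiv 1$ and self-improving by the factor $\gamma_n\beta_n^{-1}$ at each pass) makes explicit that $\delta_n$ tracks the \emph{fast} residual $\hat\Delta_n^{(\vtheta,1)}$ itself. Consequently the $\gamma_n\beta_n^{-1}\delta_n$ term does \emph{not} arise from applying Lemma~\ref{lemma:upperbound_of_sequence_x} to $\sum_k\Phi_{k,n}\beta_{k+1}\rmJ_{12}(\alpha)\Delta_k^{(\rvx)}$ via a ``slow-timescale driver'' heuristic; a direct application to that sum yields only $O(\|\Delta_n^{(\rvx)}\|)$ with no extra $\gamma_n\beta_n^{-1}$. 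In the general coupled framework of Mokkadem--Pelletier this factor emerges from the mutual feedback between the slow and fast residuals, and the paper is simply quoting that general bound verbatim. Your term-by-term accounting will not reproduce the stated $\gamma_n^2\beta_n^{-2}$ and $\gamma_n\beta_n^{-1}$ prefactors unless you carry through the full coupled-bootstrap mechanism of \citet[Section 2.3.2]{mokkadem2006convergence} rather than treating each summand in isolation.
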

Now we need to further analyse $\delta_n$ and tighten its big O form, starting from $\delta_n \equiv 1$, so that we can finally obtain the big O form of $\|\hat{\Delta}_{n+1}^{(\vtheta,1)}\|$. The following steps are borrowed from the ideas in \citet[Section 2.3.2]{mokkadem2006convergence}.

By almost sure convergence result $\lim_{n\to\infty}\vtheta_n = \vtheta^*$, we have $\lim_{n\to\infty}\Delta_n^{(\vtheta)} = 0$ a.s. such that we can first set $\delta_n \equiv 1$, and $\|\hat{\Delta}_{n+1}^{(\vtheta,1)}\| = O(\gamma_n^2\beta_n^{-2}\omega_n^2 +\gamma_n\beta_n^{-1}) + o(\sqrt{\gamma_n})$. Then, we redefine 
\begin{equation*}
    \delta_n \equiv O(\gamma_n^2\beta_n^{-2}\omega_n^2 +\gamma_n\beta_n^{-1}) + o(\sqrt{\gamma_n}),
\end{equation*}
and notice that it still satisfies definition \ref{definition:w_n}. Then, reapplying this $\delta_n$ form to \eqref{eqn:hat_delta_big_o_notation} gives
\begin{equation*}
    \|\hat{\Delta}_{n+1}^{(\vtheta,1)}\| = O(\gamma_n^2\beta_n^{-2}\omega_n^2 +[\gamma_n\beta_n^{-1}]^{2}) + o(\sqrt{\gamma_n})
\end{equation*}
and by induction we have for all integers $k\geq 1$,
\begin{equation*}
    \|\hat{\Delta}_{n+1}^{(\vtheta,1)}\| = O(\gamma_n^2\beta_n^{-2}\omega_n^2 +[\gamma_n\beta_n^{-1}]^{k}) + o(\sqrt{\gamma_n}).
\end{equation*}
Since $[\gamma_n \beta_n^{-1}]^k = n^{(b - a)k}$, there exists $k_0 > a/2(a-b)$ such that $[\gamma_n \beta_n^{-1}]^{k_0} = o(\sqrt{\gamma_n})$, and
\begin{equation}\label{eqn:133}
    \|\hat{\Delta}_{n+1}^{(\vtheta,1)}\| = O(\gamma_n^2\beta_n^{-2}\omega_n^2) + o(\sqrt{\gamma_n}).
\end{equation}
Then, as suggested in \citet[Section 2.3.2]{mokkadem2006convergence}, we can choose $\omega_n = O(\sqrt{\beta_n \log(u_n)} + [\gamma_n\beta_n^{-1}]^k)$, which also satisfies definition \ref{definition:w_n}. Then,
\begin{equation*}
\begin{split}
    &\|\vtheta_n -\vtheta^*\| = \| \hat{L}_{n}^{(\vtheta)} + \hat{R}_{n}^{(\vtheta)} + \hat{\Delta}_{n}^{(\vtheta)}\| \\
    =& O\left(\sqrt{\beta_n\log(u_n)} \!+\! \sqrt{\gamma_n \beta_{n}^{-1}\log(s_n)} \!+\! \left([\gamma_n\beta_n^{-1}]^{k+1} \!+\! \gamma_n\beta_n^{-1}\sqrt{\beta_n\log(u_n)}\right)^2 \right)   \\
    &+o(\sqrt{\beta_n}+\sqrt{\gamma_n}) \\
    =& O(\sqrt{\beta_n\log(u_n)} + [\gamma_n\beta_n^{-1}]^{k+1}).
\end{split}
\end{equation*}
By induction, this holds for all $k\geq 1$ such that there exists $k_0$, $[\gamma_n\beta_n^{-1}]^{k_0} = o(\sqrt{\beta_n})$ and $\|\vtheta_n -\vtheta^*\| = O(\sqrt{\beta_n\log(u_n)})$. Equivalently, $\omega_n = \sqrt{\beta_n\log(u_n)}$. Therefore, from \eqref{eqn:133} we have
\begin{equation*}
    \|\hat{\Delta}_{n+1}^{(\vtheta,1)}\| = O(\gamma_n^2\beta_n^{-1}\log(u_n)) + o(\sqrt{\gamma_n}) = o(\sqrt{\gamma_n}).
\end{equation*}
Together with $\|\hat{\Delta}_{n+1}^{(\vtheta,2)}\| = o(\sqrt{\beta_n})$, we have $\beta_n^{-1/2}\|\hat{\Delta}_{n+1}^{(\vtheta)}\| = o(\sqrt{\gamma_n\beta_n^{-1}}) + 1)$ such that
\begin{equation*}
    \lim_{n\to\infty} \beta_n^{-1/2}\hat{\Delta}_{n+1}^{(\vtheta)} = 0.
\end{equation*}
Thus, we have finished the proof according to the proof outline mentioned at the beginning of this part.

\section{Discussion of Covariance Ordering of SA-SRRW}\label{appendix:covariance_ordeing_SA_SRRW}

\subsection{Proof of Proposition \ref{lemma:covariance_comparison}}
For any $\alpha > 0$ and any vector $\rvx \in \R^d$, we have
$$\rvx^T\rmV_\vtheta^{(1)}(\alpha)\rvx = \int_0^{\infty} \rvx^T e^{t(\nabla_{\vtheta} \rvh(\vtheta^*)+\frac{\mathds{1}_{\{b=1\}}}{2}\mI)} \rmU_{\vtheta}(\alpha) e^{t(\nabla_{\vtheta} \rvh(\vtheta^*)+\frac{\mathds{1}_{\{b=1\}}}{2}\mI)^T} \rvx ~dt$$
where the first equality is from the form of $\rmV_\vtheta^{(1)}(\alpha)$ in Theorem \ref{theorem:CLT_SA}. Let $\rvy \triangleq e^{t(\nabla_{\vtheta} \rvh(\vtheta^*)+\frac{\mathds{1}_{\{b=1\}}}{2}\mI)}  \rvx$, with the dependence on variable $t$ left implicit. The matrix $\rmU_\vtheta(\alpha)$, given explicitly in \eqref{eqn:matrix_U_vtheta} positive semi definite, since $\lambda_i \in (-1,1)$ for all $i\in\{1,\cdots,N-1\}$. Thus, the terms $\rvy^T\rmU_{\vtheta}(\alpha)\rvy$ inside the integral are non-negative, and it is enough to provide an ordering on $\rvy^T\rmU_{\vtheta}(\alpha)\rvy$ with respect to $\alpha$.

For any $\alpha_2 > \alpha_1 > 0$,
\begin{align*}
    \rvy^T\rmU_{\vtheta}(\alpha_2)\rvy &= \sum_{i=1}^{N-1} \frac{1}{(\alpha_2(1+\lambda_i)+1)^2}\cdot\frac{1+\lambda_i}{1-\lambda_i} \rvy^T\rmH^T \rvu_i \rvu_i^T \rmH\rvy \\
    &< \sum_{i=1}^{N-1} \frac{1}{(\alpha_1(1+\lambda_i)+1)^2}\cdot\frac{1+\lambda_i}{1-\lambda_i} \rvy^T\rmH^T \rvu_i \rvu_i^T \rmH\rvy = \rvy^T\rmU_{\vtheta}(\alpha_1)\rvy \\
    &<
\sum_{i=1}^{N-1} \cdot\frac{1+\lambda_i}{1-\lambda_i} \rvy^T\rmH^T \rvu_i \rvu_i^T \rmH\rvy = \rvy^T\rmU_{\vtheta}(0)\rvy,
\end{align*}

where the inequality\footnote{The inequality may not be strict when $H$ is low rank, however it will always be true for some choice of $\rvx$, since $\rmH$ is not a zero matrix. Thus, the ordering derived still follows our definition of $<_L$ in Section \ref{section:introduction}, footnote 6.} is because $\alpha(1+\lambda_i)>0$ for all $i\in\{1,\cdots,N\}$ and any $\alpha > 0$. In fact, the ordering is monotone in $\alpha$, and $\rvy^T\rmU_{\vtheta}(\alpha_2)\rvy$ decreases at rate $1/\alpha^2$ as seen form its form in the equation above. This completes the proof.

\subsection{Discussion regarding Proposition \ref{lemma:covariance_comparison} and MSE ordering}\label{appendix:discussion_V_3_i.i.d.}

We can use Proposition \ref{lemma:covariance_comparison} to show that the MSE of SA iterates of \eqref{eqn:SA_iteration} driven by SRRW eventually becomes smaller than that SA iterates when the stochastic noise is driven by an \textit{i.i.d.} sequence of random variables. The diagonal entries of $\rmV_{\vtheta}^{(1)}(\alpha)$ are obtained by evaluating $\rve_i^T\rmV_{\vtheta}^{(1)}(\alpha)\rve_i$, where $\rve_i$ is the $i$'th standard basis vector.\footnote{$D$-dimensional vector of all zeros except at the $i$'th position which is $1$.} These diagonal entries are the asymptotic variance corresponding to the element-wise iterate errors, and for large enough $n$, we have $\rve_i^T\rmV_{\vtheta}^{(1)}(\alpha)\rve_i \approx \E[(\vtheta_n - \vtheta^*)_i^2]/\beta_n$ for all $i\in \{1,\cdots,D\}$. Thus, the trace of matrix $\rmV_\vtheta^{(1)}(\alpha)$ approximates the scaled MSE, that is $\text{Tr}(\rmV_\vtheta^{(1)}(\alpha)) = \sum_{i}\rve_i^T\rmV_{\vtheta}^{(1)}(\alpha)\rve_i \approx \sum_i\E[(\vtheta_n - \vtheta^*)_i^2]/\beta_n = \E[\| \vtheta_n - \vtheta^*\|^2]/\beta_n$ for large $n$. Since all entries of $\rmV_\vtheta^{(1)}(\alpha)$ go to zero as $\alpha$ increases, they get smaller than the corresponding term for the SA algorithm with \textit{i.i.d.} input for large enough $\alpha$, which achieves a constant MSE in the similarly scaled limit, since the asymptotic covariance is not a function of $\alpha$. Moreover, the value of $\alpha$ only needs to be moderately large, since the asymptotic covariance terms decrease at rate $O(1/\alpha^2)$ as shown in Proposition \ref{lemma:covariance_comparison}.

\subsection{Proof of Corollary \ref{lemma:case_comparison}}

We see that $\rmV_{\vtheta}^{(3)}(\alpha) = \rmV_{\vtheta}^{(3)}(0)$ for all $\alpha > 0$, because the form of $\rmV_{\vtheta}^{(3)}(\alpha)$ in Theorem \ref{theorem:CLT_SA} is independent of $\alpha$. To prove that $\rmV_{\vtheta}^{(1)}(\alpha) <_L \rmV_{\vtheta}^{(3)}(0)$, it is enough to show that $\rmV_{\vtheta}^{(1)}(0) = \rmV_{\vtheta}^{(3)}(0)$, since $\rmV_{\vtheta}^{(1)}(\alpha) <_L \rmV_{\vtheta}^{(1)}(0)$ from Proposition \ref{lemma:covariance_comparison}. This is easily checked by substituting $\alpha = 0$ in \ref{eqn:matrix_U_vtheta}, for which $\rmU_{\vtheta}(0) = \rmU_{11}$. Substituting in the respective forms of $\rmV_{\vtheta}^{(1)}(0)$ and $\rmV_{\vtheta}^{(3)}(0)$ in Theorem \ref{theorem:CLT_SA}, we get equivalence. This completes the proof.

\section{Background Theory}\label{appendix:background theory}
\subsection{Technical Lemmas}
\begin{lemma}[Solution to the Lyapunov Equation]\label{lemma:close_form_lyapunov_eq}
	If all the eigenvalues of matrix $\rmM$ have negative real part, then for every positive semi-definite matrix $\rmU$ there exists a unique positive semi-definite matrix $\rmV$ satisfying the Lyapunov equation $\rmU + \rmM\rmV + \rmV\rmM^T = \vzero$. The explicit solution $\rmV$ is given as
	\begin{equation}\label{eqn:solution_lyapunov}
		\rmV = \int_0^{\infty} e^{\rmM t}\rmU e^{(\rmM^T)t} dt.
	\end{equation}
\end{lemma}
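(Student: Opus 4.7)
The plan is to verify three things in turn: well-definedness of the integral, that the proposed $\rmV$ satisfies the Lyapunov equation, positive semi-definiteness, and uniqueness. This is the standard proof of a classical result, and the engine behind everything is the matrix-calculus identity $\frac{d}{dt}\left[e^{\rmM t}\rmU e^{\rmM^T t}\right] = \rmM e^{\rmM t}\rmU e^{\rmM^T t} + e^{\rmM t}\rmU e^{\rmM^T t}\rmM^T$, which turns the Lyapunov equation into a telescoping integral.

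First I would note that since $\rmM$ is Hurwitz, all its eigenvalues have strictly negative real parts, so there exist constants $C,\lambda>0$ with $\|e^{\rmM t}\| \leq C e^{-\lambda t}$ for all $t \geq 0$ (and likewise for $e^{\rmM^T t}$, since $\rmM^T$ shares the spectrum of $\rmM$). Hence the integrand in \eqref{eqn:solution_lyapunov} is bounded in norm by $C^2\|\rmU\| e^{-2\lambda t}$, so the improper integral converges absolutely and $\rmV$ is well-defined. Next, using the identity above and the fundamental theorem of calculus,
\begin{equation*}
    \rmM\rmV + \rmV\rmM^T = \int_0^\infty \frac{d}{dt}\left[e^{\rmM t}\rmU e^{\rmM^T t}\right] dt = \lim_{t\to\infty} e^{\rmM t}\rmU e^{\rmM^T t} - \rmU = -\rmU,
\end{equation*}
where the limit vanishes by the exponential decay bound. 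This shows that $\rmV$ solves the Lyapunov equation.

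For positive semi-definiteness, for any $\rvx \in \sR^D$,
\begin{equation*}
    \rvx^T\rmV\rvx = \int_0^\infty \rvx^T e^{\rmM t}\rmU e^{\rmM^T t}\rvx\, dt = \int_0^\infty \left(e^{\rmM^T t}\rvx\right)^T \rmU \left(e^{\rmM^T t}\rvx\right) dt \geq 0,
\end{equation*}
since $\rmU$ is positive semi-definite. For uniqueness, suppose $\rmV_1,\rmV_2$ both solve the Lyapunov equation and let $\rmW \triangleq \rmV_1 - \rmV_2$, which satisfies $\rmM\rmW + \rmW\rmM^T = \vzero$. Define $\Phi(t) \triangleq e^{\rmM t}\rmW e^{\rmM^T t}$; then by the same identity $\Phi'(t) = e^{\rmM t}(\rmM\rmW + \rmW\rmM^T)e^{\rmM^T t} = \vzero$, so $\Phi$ is constant and equal to $\Phi(0)=\rmW$. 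But $\Phi(t)\to\vzero$ as $t\to\infty$ by the Hurwitz bound, forcing $\rmW = \vzero$.

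There is no real obstacle here; the only subtlety is recognizing and justifying the telescoping identity under the integral, which in turn relies on the Hurwitz assumption to guarantee both exponential decay of $e^{\rmM t}$ and uniqueness. I would present the four bullets (well-definedness, solves the equation, PSD, unique) in this order, with essentially the computations sketched above.
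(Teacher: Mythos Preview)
Your proof is correct and complete. The paper itself does not give an independent argument: it simply cites \citet[Theorem 3.16]{chellaboina2008nonlinear} for the positive-definite case and remarks that the same steps carry over to positive semi-definite $\rmU$, so your four-step verification (convergence of the integral via the Hurwitz bound, the telescoping derivative identity, PSD of the integrand, and the constancy-of-$\Phi$ uniqueness argument) is exactly the standard proof being deferred to, spelled out in full.
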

\citet[Theorem 3.16]{chellaboina2008nonlinear} states that for a positive definite matrix $\rmU$, there exists a positive definite matrix $\rmV$. The reason they focus on the positive definite matrix $\rmU$ is that they require the related autonomous ODE system to be asymptotically stable. However, in this paper we don't need this requirement. The same steps therein can be used to prove \cref{lemma:close_form_lyapunov_eq} and show that if $\rmU$ is positive semi-definite, then $\rmV$ in the form of \eqref{eqn:solution_lyapunov} is unique and also positive semi-definite.

\begin{lemma}[Burkholder Inequality, \cite{davis1970intergrability}, \citet{hall2014martingale} Theorem 2.10]\label{lemma:Burkholder Inequality}
Given a Martingale difference sequence $\{M_{i,n}\}_{i=1}^n$, for $p\geq 1$ and some positive constant $C_p$, we have
\begin{equation}
    \E\left[\left\|\sum_{i=1}^n M_{i,n}\right\|^p\right] \leq C_p \E\left[\left(\sum_{i=1}^n \left\|M_{i,n}\right\|^2\right)^{p/2}\right]
\end{equation}
\end{lemma}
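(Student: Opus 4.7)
The plan is to prove this classical Burkholder--Davis--Gundy (BDG) inequality by a case split on $p$. For the base case $p = 2$, the inequality is immediate from orthogonality of martingale differences: by the tower property, $\E\langle M_{i,n}, M_{j,n}\rangle = 0$ for $i \neq j$, so expanding the squared norm yields $\E\bigl[\|\sum_i M_{i,n}\|^2\bigr] = \sum_i \E[\|M_{i,n}\|^2] = \E\bigl[\sum_i \|M_{i,n}\|^2\bigr]$, which is precisely the right-hand side with $C_2 = 1$ (in fact with equality).

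For general $p \geq 1$, I would first reduce to a scalar problem by either working coordinatewise or by observing that $k \mapsto \|\sum_{i\leq k} M_{i,n}\|$ is a nonnegative submartingale. The main step is then to control $\E[\|\sum_i M_{i,n}\|^p]$ by the $p$-th moment of the maximal partial sum $S^*_n = \max_{k\leq n}\|\sum_{i\leq k} M_{i,n}\|$ via Doob's $L^p$ maximal inequality, and to dominate $(S^*_n)^p$ by the square function $Q_n^2 = \sum_i \|M_{i,n}\|^2$ through Burkholder's good-lambda argument. Concretely, one establishes that for suitable $\beta > 1$, $\delta > 0$, and $\epsilon > 0$ satisfying $\beta^p \epsilon < 1$, $P(S^*_n > \beta\lambda,\, Q_n \leq \delta\lambda) \leq \epsilon\, P(S^*_n > \lambda)$; integrating against $p\lambda^{p-1}\,d\lambda$ and rearranging yields $\E[(S^*_n)^p] \leq C_p\, \E[Q_n^p]$.

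The main obstacle will be establishing the good-lambda inequality, which requires a careful stopping-time construction: stop at the first time $S^*$ exceeds $\lambda$, restart the martingale, and apply a conditional second-moment bound to the remaining increments on the event $\{Q_n \leq \delta\lambda\}$. The case $1 \leq p < 2$ is particularly delicate because $(\sum_i \|M_{i,n}\|^2)^{p/2}$ is no longer linear in the summands; the standard remedy is the Davis decomposition, which splits $M$ into a martingale with bounded jumps plus a predictable process of bounded variation, each handled separately. Since only the existence of some finite $C_p$ depending on $p$ is required for the applications (in particular the bound \eqref{eqn:29} and the downstream covariance estimates), the cleanest route is to invoke the cited results of \citet{davis1970intergrability} and \citet[Theorem~2.10]{hall2014martingale} directly rather than reprove the full BDG machinery.
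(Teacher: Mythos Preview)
Your proposal is correct, and in fact goes well beyond what the paper does: the paper treats this lemma as a background result and simply cites \cite{davis1970intergrability} and \citet[Theorem~2.10]{hall2014martingale} without giving any proof. Your final sentence---that the cleanest route is to invoke the cited references directly---is exactly the paper's ``proof,'' so you and the paper are aligned there; the sketch you provide of the $p=2$ orthogonality argument, Doob's maximal inequality, the good-$\lambda$ method, and the Davis decomposition for $1\leq p<2$ is additional (and accurate) detail that the paper does not supply.
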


\begin{theorem}[Martingale CLT, \citet{delyon2000stochastic} Theorem 30]\label{theorem:clt_martingale} 
If a Martingale difference array $\{X_{n,i}\}$ satisfies the following condition: for some $\tau > 0$,
\begin{equation}
    \sum_{k = 1}^n \E\left[\|X_{n,k}\|^{2+\tau} | \gF_{k-1}\right] \xrightarrow[]{\mathbb{P}} 0,
\end{equation}
\begin{equation}
    \sup_{n} \sum_{k=1}^n \E\left[\|X_{n,k}\|^{2} | \gF_{k-1}\right] < \infty,
\end{equation}
and
\begin{equation}
    \sum_{k=1}^n \E\left[X_{n,k}X_{n,k}^T | \gF_{k-1}\right] \xrightarrow[]{\mathbb{P}} \mV,
\end{equation}
then 
\begin{equation}
    \sum_{i=1}^n X_{n,i} \xrightarrow[]{dist.} N(0,\mV). 
\end{equation} \qed
\end{theorem}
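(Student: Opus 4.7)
The plan is to establish convergence of the characteristic function of $S_n \triangleq \sum_{i=1}^n X_{n,i}$ to the Gaussian characteristic function $s \mapsto \exp(-\tfrac{1}{2} s^T \mV s)$ and then invoke Lévy's continuity theorem. By the Cramér--Wold device, it suffices to prove the one-dimensional CLT $t^T S_n \xrightarrow[n\to\infty]{dist.} N(0, t^T \mV t)$ for every fixed $t \in \R^d$, so I would reduce the problem to the scalar martingale difference array $Y_{n,k} \triangleq t^T X_{n,k}$ adapted to $\gF_{k-1}$.

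The first step is to transfer the three hypotheses to $\{Y_{n,k}\}$: the conditional covariance condition yields $\sum_k \E[Y_{n,k}^2 \mid \gF_{k-1}] \xrightarrow{\mathbb{P}} t^T \mV t$; the uniform $L^2$ bound is immediate; and the conditional $(2+\tau)$-moment condition implies a conditional Lindeberg condition, since for any $\epsilon>0$,
\begin{equation*}
    \sum_{k=1}^n \E\!\left[Y_{n,k}^2 \mathds{1}_{\{|Y_{n,k}|>\epsilon\}} \,\Big|\, \gF_{k-1}\right] \;\le\; \epsilon^{-\tau}\,\|t\|^{2+\tau}\sum_{k=1}^n \E\!\left[\|X_{n,k}\|^{2+\tau} \,\Big|\, \gF_{k-1}\right] \;\xrightarrow{\mathbb{P}}\; 0.
\end{equation*}

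The core of the argument is the standard characteristic-function telescoping identity $e^{i s S^Y_{n,n}} - 1 = \sum_{k=1}^n e^{i s S^Y_{n,k-1}}(e^{i s Y_{n,k}}-1)$ with $S^Y_{n,k} \triangleq \sum_{j\le k} Y_{n,j}$. A second-order Taylor expansion of $e^{i s Y_{n,k}}$, together with the martingale property $\E[Y_{n,k}\mid \gF_{k-1}]=0$, collapses the linear term upon conditioning on $\gF_{k-1}$ and reduces the analysis to showing $\sum_k e^{i s S^Y_{n,k-1}}\bigl(-\tfrac{1}{2} s^2 \E[Y_{n,k}^2 \mid \gF_{k-1}] + R_{n,k}\bigr)$ has the right limit, where the Lindeberg bound above drives the aggregate remainder $\sum_k \E[|R_{n,k}|\mid \gF_{k-1}]$ to zero in probability. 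To upgrade this pointwise-in-$k$ control to convergence of $\E[e^{i s S^Y_{n,n}}]$, I would introduce a stopping time $\tau_n(\eta) \triangleq \inf\{k : \sum_{j\le k}\E[Y_{n,j}^2\mid\gF_{j-1}] > t^T\mV t + \eta\}$, analyze the stopped sum (whose conditional variances are uniformly bounded), and let $\eta\downarrow 0$; the uniform $L^2$ hypothesis ensures the contribution beyond the stopping time is asymptotically negligible.

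The main obstacle I anticipate is the \emph{random} (rather than deterministic, array-summable) nature of the conditional second moments: because $\sum_k \E[Y_{n,k}^2 \mid \gF_{k-1}]$ is only assumed to converge to the constant $t^T\mV t$ in probability, one cannot directly invoke Lindeberg--Feller and must instead carry these $\gF_{k-1}$-measurable quantities through the characteristic-function expansion. The stopping-time truncation, together with the uniform $L^2$ bound in the second hypothesis, is precisely the device that permits a conditional dominated-convergence style passage to the limit and yields $\E[e^{i s S^Y_{n,n}}] \to \exp(-\tfrac{1}{2}s^2 t^T \mV t)$, completing the proof.
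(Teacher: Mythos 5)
This statement is not proved in the paper at all: it is imported verbatim as a background result (Appendix G, Theorem G.3) with a citation to \citet{delyon2000stochastic}, so there is no in-paper proof to compare against. Your outline is the standard proof of the martingale CLT — Cramér--Wold reduction, deriving the conditional Lindeberg condition from the conditional Lyapunov $(2+\tau)$-moment condition, the characteristic-function telescoping with second-order Taylor expansion, and a stopping-time truncation to control the random conditional variances — and this is essentially the argument in the cited source (and in Hall--Heyde), so as a sketch it is correct and consistent with what the paper relies on.
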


\begin{lemma}[\cite{duflo1996algorithmes} Proposition 3.I.2]\label{lemma:matrix_norm_inequality}
    For a Hurwitz matrix $\mH$, there exist some positive constants $C, b$ such that for any $n$,
    \begin{equation}
        \left\|e^{\mH n}\right\| \leq Ce^{-bn}.
    \end{equation} 
\end{lemma}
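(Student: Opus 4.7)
The plan is to reduce the problem to the Jordan canonical form of $\mH$. Since $\mH$ is Hurwitz, every eigenvalue $\lambda_k$ satisfies $\text{Re}(\lambda_k) < 0$; set $b' \triangleq -\max_k \text{Re}(\lambda_k) > 0$ and fix any $b \in (0, b')$. Writing $\mH = \mP \mJ \mP^{-1}$ with $\mJ = \text{diag}(J_1, \ldots, J_r)$ in Jordan form and blocks $J_k = \lambda_k \mI_{m_k} + \mN_{m_k}$ (where $\mN_{m_k}$ is the standard nilpotent of order $m_k$), the problem reduces to bounding each $\|e^{J_k n}\|$ and assembling.

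First I would write $e^{\mH n} = \mP\, e^{\mJ n}\, \mP^{-1}$ and, using commutativity of $\lambda_k \mI$ with $\mN_{m_k}$, expand
\begin{equation*}
    e^{J_k n} = e^{\lambda_k n} \sum_{j=0}^{m_k - 1} \frac{(\mN_{m_k} n)^j}{j!}.
\end{equation*}
The scalar factor $|e^{\lambda_k n}| = e^{\text{Re}(\lambda_k) n} \leq e^{-b' n}$ decays exponentially, while the matrix-polynomial prefactor grows only polynomially in $n$. Polynomial growth is dominated by any strictly slower exponential decay, so for the chosen $b < b'$ there exists $C_k'$ with $\|e^{J_k n}\| \leq C_k' e^{-b n}$ for all $n \geq 0$. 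Aggregating over the finitely many blocks gives $\|e^{\mJ n}\| \leq C'' e^{-b n}$, and sub-multiplicativity of the spectral norm yields $\|e^{\mH n}\| \leq \|\mP\|\,\|\mP^{-1}\|\, C'' e^{-b n} = C e^{-b n}$, as claimed.

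The main (routine) obstacle is the careful bookkeeping of the polynomial prefactor when $\mH$ admits non-trivial Jordan blocks, since the absorbing constant $C_k'$ depends on both the block size $m_k$ and the gap $b' - b$. A cleaner alternative would invoke Lemma \ref{lemma:close_form_lyapunov_eq} with $\mU = \mI$ to produce a positive definite $\mV$ solving $\mH\mV + \mV\mH^T = -\mI$, then study the Lyapunov function $W(t) \triangleq \rvx(t)^T\mV\rvx(t)$ on trajectories of $\dot{\rvx} = \mH\rvx$: the identity $\dot W = -\|\rvx\|^2 \leq -W/\lambda_{\max}(\mV)$ combined with Gr\"onwall's inequality gives the desired exponential bound with explicit constants $C = \sqrt{\lambda_{\max}(\mV)/\lambda_{\min}(\mV)}$ and $b = 1/(2\lambda_{\max}(\mV))$. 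One subtlety here is that the integral representation of $\mV$ in Lemma \ref{lemma:close_form_lyapunov_eq} implicitly assumes a decay bound analogous to the statement being proved, so for logical independence one should invoke only the existence and positive-definiteness of $\mV$, which can be established by spectral analysis of the Kronecker operator $\mI \otimes \mH + \mH \otimes \mI$ (whose eigenvalues $\lambda_i + \lambda_j$ all have negative real parts).
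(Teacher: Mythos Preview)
Your proposal is correct. The paper does not actually prove this lemma: it is stated in Appendix~G as a background technical fact and attributed to \cite{duflo1996algorithmes}, Proposition~3.I.2, with no proof given. Your Jordan-form argument is the standard one and would be entirely acceptable; the Lyapunov alternative you sketch is also valid, and your caveat about avoiding circularity (establishing existence of $\mV$ via the Kronecker-sum spectrum rather than via the integral formula) is well taken.
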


\begin{lemma}[\cite{fort2015central} Lemma 5.8]\label{lemma:matrix_product_norm_inequality}
    For a Hurwitz matrix $\mA$, denote by $-r$, $r>0$, the largest real part of its eigenvalues. Let a positive sequence $\{\gamma_n\}$ such that $\lim_n \gamma_n = 0$. Then for any $0<r'<r$, there exists a positive constant $C$ such that for any $k < n$,
    \begin{equation}
        \left\|\prod_{j=k}^n (\mI + \gamma_j\mA)\right\| \leq C e^{-r' \sum_{j=k}^n \gamma_j}.
    \end{equation}
\end{lemma}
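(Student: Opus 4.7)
The plan is to prove this using a Lyapunov-function argument, converting the discrete product into a product of contractions in an appropriate norm, and then converting back to the spectral norm via norm equivalence.

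First, I would pick an intermediate rate $r''$ with $r' < r'' < r$. Since $-r$ is the largest real part of eigenvalues of $\mA$, the matrix $\mA + r''\mI$ remains Hurwitz, so by the standard Lyapunov lemma there exists a symmetric positive definite matrix $P$ satisfying
\begin{equation*}
  \mA^T P + P\mA + 2r''P \preceq 0, \qquad\text{i.e.,}\qquad \mA^T P + P\mA \preceq -2r''P.
\end{equation*}
Define the weighted inner product $\langle \rvx,\rvy\rangle_P \triangleq \rvx^T P\rvy$ and norm $\|\rvx\|_P^2 = \rvx^T P\rvx$. The idea is that in this norm each factor $\mI+\gamma_j\mA$ is a contraction with rate essentially $e^{-r'\gamma_j}$ for large $j$.

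Next, I would compute the one-step Lyapunov increment directly:
\begin{equation*}
  (\mI+\gamma_j\mA)^T P(\mI+\gamma_j\mA) = P + \gamma_j(\mA^T P + P\mA) + \gamma_j^2\,\mA^T P\mA \preceq (1 - 2r''\gamma_j)P + \gamma_j^2\,\mA^T P\mA.
\end{equation*}
Setting $C_1 \triangleq \|P^{-1/2}\mA^T P\mA P^{-1/2}\|$ gives $\mA^T P\mA \preceq C_1 P$, hence
\begin{equation*}
  (\mI+\gamma_j\mA)^T P(\mI+\gamma_j\mA) \preceq \bigl(1 - 2r''\gamma_j + C_1\gamma_j^2\bigr)P.
\end{equation*}
Because $\gamma_j\to 0$, there exists $N_0$ such that for all $j\geq N_0$, $C_1\gamma_j \leq 2(r''-r')$, which forces $1 - 2r''\gamma_j + C_1\gamma_j^2 \leq 1 - 2r'\gamma_j \leq e^{-2r'\gamma_j}$. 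Therefore for $j\geq N_0$,
\begin{equation*}
  (\mI+\gamma_j\mA)^T P(\mI+\gamma_j\mA) \preceq e^{-2r'\gamma_j}P.
\end{equation*}

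Then I would telescope. For $k \geq N_0$, iterating the bound through the product $U_{k,n} \triangleq \prod_{j=k}^n (\mI+\gamma_j\mA)$ yields
\begin{equation*}
  U_{k,n}^T P\, U_{k,n} \preceq \exp\!\Bigl(-2r'\sum_{j=k}^n\gamma_j\Bigr)P,
\end{equation*}
which, by norm equivalence $\lambda_{\min}(P)\|\rvx\|^2 \leq \|\rvx\|_P^2 \leq \lambda_{\max}(P)\|\rvx\|^2$, gives
\begin{equation*}
  \|U_{k,n}\| \leq \sqrt{\lambda_{\max}(P)/\lambda_{\min}(P)}\;\exp\!\Bigl(-r'\sum_{j=k}^n\gamma_j\Bigr).
\end{equation*}
For $k < N_0$, I would split $U_{k,n} = U_{k,N_0-1}\,U_{N_0,n}$; the prefix $U_{k,N_0-1}$ is a product of at most $N_0$ uniformly bounded factors (each with norm at most $1+\gamma_{\max}\|\mA\|$), and the tail $U_{N_0,n}$ is already controlled. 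Since $\sum_{j=k}^{N_0-1}\gamma_j$ is also a bounded constant, the missing exponential factor $e^{-r'\sum_{j=k}^{N_0-1}\gamma_j}$ can be absorbed into the final constant $C$.

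The one subtlety — and the main thing to get right — is the interplay between the three rates $r' < r'' < r$: the gap $r - r''$ is what buys existence of the Lyapunov matrix $P$, while the gap $r'' - r'$ is precisely what absorbs the quadratic remainder $C_1\gamma_j^2$ once $\gamma_j$ is small. Choosing, e.g., $r'' = (r+r')/2$ makes both gaps explicit. Everything else is routine linear algebra plus norm equivalence.
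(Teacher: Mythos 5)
Your proof is correct. One thing to note up front: the paper does not prove this statement at all --- it is quoted verbatim as an external result (\citealt{fort2015central}, Lemma 5.8) in the background appendix --- so there is no internal proof to compare against. Your Lyapunov-norm argument is the standard way to establish it and is essentially the same mechanism used in the cited source: shift $\mA$ by $r''\mI$ with $r'<r''<r$ to keep it Hurwitz, extract a quadratic Lyapunov matrix $P$, show each factor $\mI+\gamma_j\mA$ contracts the $P$-norm by $e^{-r'\gamma_j}$ once $\gamma_j$ is small enough that the quadratic remainder $C_1\gamma_j^2$ is dominated by the slack $2(r''-r')\gamma_j$, telescope, and absorb the finitely many initial factors into the constant via norm equivalence. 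All the steps check out: $\mA^T P\mA\preceq C_1 P$ with $C_1=\|P^{-1/2}\mA^T P\mA P^{-1/2}\|$ is valid, $1-2r'\gamma_j\leq e^{-2r'\gamma_j}$ always holds, and the telescoping through the semidefinite ordering is legitimate because the scalar factors $e^{-2r'\gamma_j}$ are positive (conjugation preserves the Loewner inequality when the scalar multiplier is nonnegative). Two small points you leave implicit but which are harmless: the prefix bound needs $\sup_j\gamma_j<\infty$, which follows from $\gamma_n\to 0$ together with positivity; and the splitting $U_{k,n}=U_{k,N_0-1}U_{N_0,n}$ is unambiguous here because all factors are polynomials in $\mA$ and hence commute. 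No gaps.
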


\begin{lemma}[\cite{fort2015central} Lemma 5.9, \cite{mokkadem2006convergence} Lemma 10]\label{lemma:upperbound_of_sequence_x}
    Let $\{\gamma_n\}$ be a positive sequence such that $\lim_n \gamma_n = 0$ and $\sum_n \gamma_n = \infty$. Let $\{\epsilon_n, n\geq 0\}$ be a nonnegative sequence. Then, for $b > 0$, $p \geq 0$,
    \begin{equation}
        \limsup_n \gamma_{n}^{-p} \sum_{k=1}^n \gamma_k^{p+1} e^{-b \sum_{j=k+1}^n \gamma_j} \epsilon_k \leq \frac{1}{C(b,p)} \limsup_n \epsilon_n
    \end{equation}
    for some constant $C(b,p) > 0$. 
    
    When $p = 0$ and define a positive sequence $\{w_n\}$ satisfying $w_{n-1}/w_n = 1 + o(\gamma_n)$, we have
    \begin{equation}
        \sum_{k=1}^n \gamma_k e^{-b \sum_{j=k+1}^n \gamma_j} \epsilon_k = \begin{cases}
            O(w_n), & \quad \text{if~} \epsilon_n = O(w_n), \\ o(w_n), & \quad \text{if~} \epsilon_n = o(w_n).
        \end{cases}
    \end{equation}
\end{lemma}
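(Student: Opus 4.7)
The plan is to reduce both claims to a first-order linear recursion for a normalized weight sum, and then handle the $\epsilon_k$'s by a standard truncation/limsup argument. Let $S_n^{(p)} \triangleq \sum_{k=1}^n \gamma_k^{p+1} e^{-b\sum_{j=k+1}^n \gamma_j}$ denote the ``model'' sum with unit weights. Peeling off the last summand gives $S_n^{(p)} = \gamma_n^{p+1} + e^{-b\gamma_n} S_{n-1}^{(p)}$, and dividing by $\gamma_n^p$ yields
$$u_n \triangleq \frac{S_n^{(p)}}{\gamma_n^p} = \gamma_n + e^{-b\gamma_n}\left(\frac{\gamma_{n-1}}{\gamma_n}\right)^{\!p} u_{n-1}.$$
Under $\gamma_n \to 0$ and $\sum_n \gamma_n = \infty$, I would expand the coefficient of $u_{n-1}$ as $1 - C(b,p)\gamma_n + o(\gamma_n)$ for an explicit constant $C(b,p) > 0$ (whose value depends on the decay rate of $\gamma_n$), and then apply a discrete Gr\"onwall / Abel-summation argument to this scalar recursion to deduce $\limsup_n u_n \leq 1/C(b,p)$.

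To obtain the first inequality for general nonnegative $\epsilon_k$ with $M \triangleq \limsup_n \epsilon_n < \infty$, I would fix $\eta > 0$, choose $N_0$ so that $\epsilon_k \leq M + \eta$ for all $k \geq N_0$, and split the sum at $N_0$. The head $\gamma_n^{-p}\sum_{k=1}^{N_0-1} \gamma_k^{p+1} e^{-b\sum_{j=k+1}^n \gamma_j}\epsilon_k$ is a fixed finite sum multiplied by the uniformly vanishing factor $e^{-b\sum_{j=N_0}^n \gamma_j}$ (which tends to $0$ since $\sum \gamma_j = \infty$), while the tail is dominated by $(M+\eta)\,u_n$. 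Taking $\limsup_n$ and then $\eta \downarrow 0$ closes the bound. For the $p=0$ refinement, I would rerun the recursion analysis directly on $T_n \triangleq \sum_{k=1}^n \gamma_k e^{-b\sum_{j=k+1}^n \gamma_j}\epsilon_k$, normalized by $w_n$: setting $v_n \triangleq T_n/w_n$, the identity $T_n = \gamma_n \epsilon_n + e^{-b\gamma_n} T_{n-1}$ becomes
$$v_n = \gamma_n\,\frac{\epsilon_n}{w_n} + \frac{w_{n-1}}{w_n}\, e^{-b\gamma_n}\, v_{n-1},$$
with the coefficient of $v_{n-1}$ equal to $1 - b\gamma_n + o(\gamma_n)$ by the defining property $w_{n-1}/w_n = 1 + o(\gamma_n)$. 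A Gr\"onwall-style induction then yields $v_n = O(1)$ whenever $\epsilon_n/w_n = O(1)$, and $v_n = o(1)$ whenever $\epsilon_n/w_n = o(1)$, since the contractive factor damps old contributions while the forcing $\gamma_n \epsilon_n/w_n$ is controlled by assumption.

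The main obstacle is pinning down $C(b,p) > 0$ uniformly across the admissible step-size schedules. The expansion of $e^{-b\gamma_n}(\gamma_{n-1}/\gamma_n)^p$ is regime-dependent: for $\gamma_n = (n+1)^{-a}$ with $a < 1$ the ratio $\gamma_{n-1}/\gamma_n = 1 + O(1/n)$ is absorbed into the $o(\gamma_n)$ remainder and the effective contraction rate is simply $b$, but for $a = 1$ the same ratio contributes an order-$\gamma_n$ correction that partially cancels $b$, so one must check that the resulting rate is strictly positive (which amounts to a parameter compatibility condition between $b$ and $p$). Once the expansion is verified in each regime, the truncation and Gr\"onwall steps are routine, which is why this technical lemma is typically quoted from \citet{fort2015central,mokkadem2006convergence} rather than re-derived in each application.
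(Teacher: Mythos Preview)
The paper does not supply its own proof of this lemma; it is quoted verbatim as a technical tool from \citet{fort2015central} and \citet{mokkadem2006convergence}. Your recursion-plus-truncation scheme is precisely the standard argument used in those references: the peeling identity $S_n^{(p)} = \gamma_n^{p+1} + e^{-b\gamma_n}S_{n-1}^{(p)}$, the normalization $u_n = \gamma_n^{-p}S_n^{(p)}$, the discrete Gr\"onwall step, and the head/tail split for general $\epsilon_k$ are all correct and in the right order.

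You have also correctly isolated the one genuinely delicate point: the expansion $e^{-b\gamma_n}(\gamma_{n-1}/\gamma_n)^p = 1 - C(b,p)\gamma_n + o(\gamma_n)$ with $C(b,p)>0$ does \emph{not} follow from the bare hypotheses $\gamma_n \to 0$ and $\sum_n \gamma_n = \infty$ alone. The original sources carry an additional regularity hypothesis on the step-size ratio (of the type $\gamma_{n-1}/\gamma_n = 1 + O(\gamma_n)$, automatically satisfied by $\gamma_n = n^{-a}$), which the paper's restatement elides. Your observation that the boundary case $a=1$ yields an effective rate $b-p$ and hence a compatibility constraint $b>p$ is exactly right; all applications of this lemma in the paper occur within such admissible regimes, so your caveat is apt rather than a defect in your argument.
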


\begin{lemma}[\cite{fort2015central} Lemma 5.10]\label{lemma:matrix_inequality}
    For any matrices $A,B,C$,
    \begin{equation}
        \|ABA^T - CBC^T\| \leq \|A-C\| \|B\| (\|A\|+\|C\|).
    \end{equation}
\end{lemma}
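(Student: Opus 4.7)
The plan is to prove the bound by the standard telescoping-plus-triangle-inequality technique, relying only on elementary properties of the spectral (operator) norm. The core idea is that $ABA^T - CBC^T$ perturbs two outer factors simultaneously, and we can separate their effects by inserting a cross term so that only one outer factor differs per summand.

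First, I would establish the algebraic identity $ABA^T - CBC^T = AB(A^T - C^T) + (A - C)BC^T$ by adding and subtracting $ABC^T$. The triangle inequality then gives $\|ABA^T - CBC^T\| \leq \|AB(A^T - C^T)\| + \|(A-C)BC^T\|$. Next, I would apply sub-multiplicativity $\|XY\| \leq \|X\|\|Y\|$ twice in each summand to obtain $\|AB(A^T - C^T)\| \leq \|A\|\|B\|\|A^T - C^T\|$ and $\|(A-C)BC^T\| \leq \|A-C\|\|B\|\|C^T\|$. Finally, invoking transpose-invariance of the spectral norm, $\|M^T\| = \|M\|$, I would replace $\|A^T - C^T\|$ by $\|A - C\|$ and $\|C^T\|$ by $\|C\|$, and then factor out the common $\|A-C\|\|B\|$ to arrive at the stated bound.

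There is essentially no substantive obstacle here; the only real choice is which cross term to insert, and the symmetric alternative $CBA^T$ would yield the same bound. The argument uses only the triangle inequality, sub-multiplicativity, and transpose-invariance, so in fact the same inequality holds for any sub-multiplicative matrix norm that is invariant under transposition (for instance, the Frobenius norm in addition to the spectral norm).
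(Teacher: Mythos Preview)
Your proof is correct and is the standard argument. The paper does not give its own proof of this lemma; it simply cites it from \cite{fort2015central}, so there is nothing substantive to compare against, and your telescoping via the cross term $ABC^T$ followed by sub-multiplicativity and transpose-invariance is exactly the expected route.
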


\subsection{Asymptotic Results of Single-Timescale SA}\label{section:asymptotic_results_single_timescale_SA}
Consider the stochastic approximation in the form of 
	\begin{equation}\label{eqn:update_SA}
		\vz_{n+1} = \vz_n + \gamma_{n+1}G(\vz_n, X_{n+1}).
	\end{equation}
    Let $\rmK_{\vz}$ be the transition kernel of the underlying Markov chain $\{X_n\}_{n\geq 0}$ with stationary distribution $\pi(\vz)$ such that $g(\vz) \triangleq \E_{X \sim \pi(\vz)}[G(\vz,X)]$ with domain $\gO \subseteq \sR^d$. Define an operator $\rmK_{\vz}f$ for any function $f: \gN \to \sR^D$ such that
    \begin{equation}\label{eqn:notation_define}
        (\rmK_{\vz}f)(i) = \sum_{j\in\gN} f(j) \rmK_{\vz}(i,j). 
    \end{equation}
	Assume that 
	\begin{enumerate}
		\item[C1.] W.p.1, the closure of $\{\vz_n\}_{n\geq 0}$ is a compact subset of $\gO$.
		\item[C2.] $\gamma_n = \gamma_0/n^{a}, a \in (1/2,1]$.
		\item[C3.] Function $g$ is continuous on $\gO$ and there exists a non-negative $C^1$ function $w$ and a compact set $\gK \subset \gO$ such that
            \begin{itemize}
                \item $\nabla w(\vz)^T g(\vz) \leq 0$ for all $\vz \in \gO$ and $\nabla w(\vz)^T g(\vz) < 0$ if $\vz \notin \gK$;
                \item the set $S \triangleq \{ \vz ~|~  \nabla w(\vz)^T g(\vz) = 0\}$ is such that $w(S)$ has an empty interior;
            \end{itemize}
        \item[C4.] For every $\vz$, there exists a solution $m_{\vz}: \gN \to \sR^{d}$ for the following Poisson equation
        \begin{equation}\label{eqn:poisson_equation1}
            m_{\vz}(i) - (\rmK_{\vz}m_{\vz})(i) = G(\vz, i) - g(\vz)
        \end{equation}
        for any $i \in \gN$; for any compact set $\gC \subset \gO$, 
        \begin{equation}\label{eqn:solution_poisson_bounded}
            \sup_{\vz \in \gC, i \in \gN} \|(\rmK_{\vz}m_{\vz})(i)\| + \|m_{\vz}(i)\| < \infty
        \end{equation}
        and there exist a continuous function $\phi_{\gC}, \phi_{\gC}(0) = 0$, such that for any $\vz,\vz' \in \gC$, 
        \begin{equation}\label{eqn:solution_poisson_local_lipschitz}
            \sup_{i\in\gN} \|(\rmK_{\vz}m_{\vz})(i) - (\rmK_{\vz'}m_{\vz'})(i)\| \leq \phi_{\gC}(\|\vz -\vz'\|).
        \end{equation}
        \item[C5.] Denote by $-r$ the largest real part of the eigenvalues of the Jacobian matrix $\nabla g(\vz^*)$ and assume $r > \frac{\mathds{1}_{\{a=1\}}}{2}$. 
		\item[C6.] For every $\vz$, there exists a solution $Q_{\vz}: \gN \to \sR^{d\times d}$ for the following Poisson equation
        \begin{equation}\label{eqn:poisson_equation2}
            Q_{\vz}(i) - (\rmK_{\vz}Q_{\vz})(i) = F(\vz, i) - \E_{j\sim \pi(\vz)}[F(\vz,j)]
        \end{equation}
        for any $i \in \gN$, where 
        \begin{equation}
            F(\vz,i) \triangleq \sum_{j\in\gN} m_{\vz}(j)m_{\vz}(j)^T \rmK_{\vz}(i,j) - (\rmK_{\vz}m_{\vz})(i)(\rmK_{\vz}m_{\vz})(i)^T.
        \end{equation}
        For any compact set $\gC \subset \gO$, 
        \begin{equation}
            \sup_{\vz \in \gC, i \in \gN} \|Q_{\vz}(i)\| + \|(\rmK_{\vz}Q_{\vz})(i)\| < \infty
        \end{equation}
        and there exist $p, C_{\gC} > 0$, such that for any $\vz,\vz' \in \gC$, 
        \begin{equation}
            \sup_{i\in\gN} \|(\rmK_{\vz}Q_{\vz})(i) - (\rmK_{\vz'}Q_{\vz'})(i)\| \leq C_{\gC}\|\vz -\vz'\|^p.
        \end{equation}
	\end{enumerate}

\begin{theorem}[\citet{delyon1999convergence} Theorem 2]\label{theorem:a.s.convergence_SA}
Consider \eqref{eqn:update_SA} and assume C1 - C4. Then, w.p.1, $\limsup_{n} d(\vz_n, S) = 0$.
\end{theorem}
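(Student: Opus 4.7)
The plan is to use the ODE method: show that, up to summable perturbations, the iterate \eqref{eqn:update_SA} asymptotically tracks the mean-field flow $\dot{\vz}(t) = g(\vz(t))$, and then exploit the Lyapunov function $w$ from C3 to pin the limit set down to $S$. The first and central task is to control the Markovian noise $G(\vz_n, X_{n+1}) - g(\vz_n)$, for which I would invoke the Poisson-equation solution $m_{\vz}$ given by C4 to write
\begin{equation*}
G(\vz_n, X_{n+1}) - g(\vz_n) = \underbrace{m_{\vz_n}(X_{n+1}) - (\rmK_{\vz_n} m_{\vz_n})(X_n)}_{\triangleq\, \xi_{n+1}^{(1)}} + \underbrace{(\rmK_{\vz_n} m_{\vz_n})(X_n) - (\rmK_{\vz_{n+1}} m_{\vz_{n+1}})(X_{n+1})}_{\triangleq\, \xi_{n+1}^{(2)}} + \underbrace{(\rmK_{\vz_{n+1}} m_{\vz_{n+1}})(X_{n+1}) - (\rmK_{\vz_n} m_{\vz_n})(X_{n+1})}_{\triangleq\, \xi_{n+1}^{(3)}}.
\end{equation*}
By C1 the iterates stay a.s.\ in a compact set $\gC \subset \gO$, so \eqref{eqn:solution_poisson_bounded} makes $\{\xi_{n+1}^{(1)}\}$ a bounded martingale-difference sequence (w.r.t.\ the natural filtration); the weighted sum $\sum_k \gamma_{k+1}\xi_{k+1}^{(1)}$ then has quadratic variation $O(\sum_k \gamma_k^2) < \infty$ by C2, hence converges a.s. The telescoping $\xi^{(2)}$ term is handled by Abel summation using $|\gamma_{n+1} - \gamma_n| = O(\gamma_n^2)$ and the boundedness from \eqref{eqn:solution_poisson_bounded}. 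The small $\xi^{(3)}$ term is controlled by \eqref{eqn:solution_poisson_local_lipschitz}, namely $\|\xi_{n+1}^{(3)}\| \le \phi_{\gC}(\|\vz_{n+1} - \vz_n\|) = \phi_{\gC}(O(\gamma_{n+1}))$, so that $\gamma_{n+1}\|\xi_{n+1}^{(3)}\|$ is deterministically summable since $\phi_\gC$ is continuous with $\phi_\gC(0) = 0$. Collecting everything, $\sum_k \gamma_{k+1}[G(\vz_k, X_{k+1}) - g(\vz_k)]$ converges a.s., which is exactly what is needed to make $\vz_n$ an asymptotic pseudo-trajectory of $\dot{\vz} = g(\vz)$.

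Next I would exploit the Lyapunov descent from C3. A Taylor expansion of $w$ gives, for any $N$,
\begin{equation*}
w(\vz_N) - w(\vz_0) = \sum_{n=0}^{N-1} \gamma_{n+1}\, \nabla w(\vz_n)^T g(\vz_n) \;+\; \sum_{n=0}^{N-1} \gamma_{n+1}\, \nabla w(\vz_n)^T\bigl[G(\vz_n, X_{n+1}) - g(\vz_n)\bigr] \;+\; R_N,
\end{equation*}
where $R_N = O(\sum_n \gamma_{n+1}^2) = O(1)$ deterministically by C2 and C1, and the middle term converges a.s.\ by the noise analysis above (after checking that $\nabla w$ is bounded on the compact hull of $\{\vz_n\}$). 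Since $w \ge 0$ and the drift term is nonpositive by C3, summability forces $\sum_n \gamma_{n+1}\bigl|\nabla w(\vz_n)^T g(\vz_n)\bigr| < \infty$ a.s. Combined with $\sum_n \gamma_n = \infty$ from C2 and the continuity of $\nabla w^T g$ on the compact set, standard arguments (e.g.\ Kushner--Clark, or Bena\"im's characterization via asymptotic pseudo-trajectories) show that every limit point of $\{\vz_n\}$ lies in the set $S = \{\nabla w^T g = 0\}$, and that $w(\vz_n)$ converges to some finite value $w_\infty$. Because the connected limit set of an asymptotic pseudo-trajectory must be contained in a connected component of $w^{-1}(w_\infty) \cap S$, and C3 posits that $w(S)$ has empty interior (so connected subsets of $w(S)$ are singletons), the limit set is actually contained in $S$, yielding $\limsup_n d(\vz_n, S) = 0$ a.s.

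\textbf{Main obstacle.} The delicate part is the simultaneous control of all three pieces $\xi^{(1)}, \xi^{(2)}, \xi^{(3)}$ for an \emph{iterate-dependent} kernel $\rmK_{\vz_n}$: the Poisson solution $m_{\vz_n}$ itself changes with $n$, so the martingale/telescoping decomposition is not immediate, and the local modulus $\phi_\gC$ in \eqref{eqn:solution_poisson_local_lipschitz} is only assumed continuous (not linearly Lipschitz), which is why C1 (a.s.\ confinement to a compact set) is critical to turn $\phi_\gC(O(\gamma_n))$ into a summable bound. A secondary subtlety is the final upgrade from "limit points sit in some level set of $w$ meeting $S$" to "limit points sit in $S$"; here the empty-interior condition on $w(S)$ plus the connectedness of the limit set of an asymptotic pseudo-trajectory are essential, and simply having $\nabla w^T g \le 0$ without the empty-interior condition would only yield recurrence to $\{\nabla w^T g = 0\}$ rather than true convergence to $S$.
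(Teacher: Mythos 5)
This theorem is not proved in the paper at all: it is quoted verbatim as a background result (Theorem 2 of Delyon et al., 1999) in Appendix G, and the paper only ever \emph{verifies its hypotheses} C1--C4 for the SA-SRRW iteration in Appendix D. So there is no in-paper proof to compare against; I am judging your argument against the standard proof in the cited literature. Your overall architecture is the right one, and matches what Delyon et al. actually do: solve the Poisson equation, split $G(\vz_n,X_{n+1})-g(\vz_n)$ into a martingale-difference part $\xi^{(1)}$, a telescoping part $\xi^{(2)}$, and a kernel-perturbation part $\xi^{(3)}$ (your decomposition is algebraically correct, and $\E[\xi^{(1)}_{n+1}\mid\gF_n]=0$ does hold), then feed the result into a Kushner--Clark/Lyapunov argument. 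The treatment of $\xi^{(1)}$ (square-summable weights, $L^2$-bounded martingale, modulo a routine localization since the compact set in C1 is sample-path dependent) and of $\xi^{(2)}$ (Abel summation with $\sum_k|\gamma_{k+1}-\gamma_k|<\infty$) is correct.

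The genuine gap is your claim that $\gamma_{n+1}\|\xi^{(3)}_{n+1}\|$ is \emph{summable}. Condition \eqref{eqn:solution_poisson_local_lipschitz} only gives a continuous modulus $\phi_{\gC}$ with $\phi_{\gC}(0)=0$, so $\|\xi^{(3)}_{n+1}\|\le\phi_{\gC}(O(\gamma_{n+1}))=o(1)$ and hence $\gamma_{n+1}\|\xi^{(3)}_{n+1}\|=o(\gamma_{n+1})$ --- which is \emph{not} summable, since C2 forces $\sum_n\gamma_n=\infty$ (take $\phi_{\gC}(x)\sim 1/\log(1/x)$ to see the failure concretely; even a H\"older modulus $x^p$ with small $p$ does not save you for $a$ close to $1/2$). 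This matters because your Lyapunov step genuinely uses convergence of $\sum_n\gamma_{n+1}\nabla w(\vz_n)^T[G(\vz_n,X_{n+1})-g(\vz_n)]$ to conclude $\sum_n\gamma_{n+1}|\nabla w(\vz_n)^Tg(\vz_n)|<\infty$; if the $\xi^{(3)}$ contribution can drift to $+\infty$, that deduction collapses. The fix --- and what the actual proof of Delyon et al.\ does --- is to \emph{not} require summability of this piece: classify $\xi^{(3)}_{n+1}$ as a remainder $r_n$ with $r_n\to 0$, and use a Kushner--Clark/Bena\"im-type theorem whose noise condition is only that the perturbations accumulated over bounded ODE-time windows $\sum_{k=n}^{m(n,T)}\gamma_{k+1}\xi_{k+1}$ vanish as $n\to\infty$; an $o(\gamma_n)$ per-step term contributes at most $o(1)\cdot(T+\gamma_n)$ to such a window and is therefore harmless. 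With that reformulation the asymptotic-pseudo-trajectory property holds, the limit set is compact, connected and internally chain transitive, and the Lyapunov function $w$ together with the empty-interior condition on $w(S)$ (your last paragraph, which is the standard argument, though your phrasing "contained in a connected component of $w^{-1}(w_\infty)\cap S$, hence contained in $S$" is circular as written --- the point is that $w$ maps the connected limit set into the empty-interior set $w(S)$, forcing $w$ to be constant on it, and invariance plus strict decrease off $S$ then forces the limit set into $S$) yields $\limsup_n d(\vz_n,S)=0$.
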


\begin{theorem}[\cite{fort2015central} Theorem 2.1 \& Proposition 4.1]\label{theorem:CLT_classical_SA} 
Consider \eqref{eqn:update_SA} and assume C1 - C6. Then, given the condition that $\vz_n$ converges to one point $\vz^* \in S$, we have
	\begin{equation}
		\gamma_n^{-1/2} (\vz_n - \vz^*)  \xrightarrow[n\to\infty]{dist.} N(0, \rmV),
	\end{equation}
	where 
	\begin{equation}\label{eqn:lyapunov_equation}
    \rmV\left(\frac{\mathds{1}_{\{b=1\}}}{2}\rmI + \nabla g(\vz^*)^T\right) + \left(\frac{\mathds{1}_{\{b=1\}}}{2}\rmI + \nabla g(\vz^*)\right)\rmV +\rmU = 0,
	\end{equation}
and
  \begin{equation}
     \rmU \triangleq \sum_{i\in\gN} \mu_i \left(m_{\rvz^*}(i)m_{\rvz^*}(i)^T - (\rmK_{\rvz^*}m_{\rvz^*})(i)(\rmK_{\rvz^*}m_{\rvz^*})(i)^T \right).
 \end{equation}
\end{theorem}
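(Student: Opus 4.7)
\textbf{Proof Proposal for Theorem \ref{theorem:CLT_SA}.}

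The plan is to handle the three step-size regimes separately, unified by the idea of transforming the controlled Markov noise into a martingale-difference contribution plus negligible terms via the Poisson equation. Case (ii), where $\beta_n=\gamma_n$, reduces to a single-timescale SA after augmentation, so I would apply the classical CLT in Theorem \ref{theorem:CLT_classical_SA}. Cases (i) and (iii) are two-timescale SA with a state-dependent (i.e.\ controlled) Markov kernel $\rmK[\rvx]$, and since no existing CLT covers this setting I must build one from scratch by combining the martingale CLT in Theorem \ref{theorem:clt_martingale} with a careful decomposition.

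For $k=2$, first I would augment $\rvz_n\triangleq(\vtheta_n,\rvx_n)$, view $\{X_n\}$ as a Markov chain whose kernel $\rmK'[\rvz_n]\equiv\rmK[\rvx_n]$ has stationary distribution $\vpi'[\rvz_n]\equiv\vpi[\rvx_n]$, and rewrite \eqref{eqn:SRRW_SA_iterations} as a standard single-timescale SA $\rvz_{n+1}=\rvz_n+\gamma_{n+1}G(\rvz_n,X_{n+1})$ with mean field $g$ vanishing at $\rvz^{*}=(\vtheta^{*},\vmu)$. The linearization of $g$ at $\rvz^{*}$ produces exactly the Jacobian $\rmJ(\alpha)$ in \eqref{eqn:Jacobian}; the block $\rmJ_{22}(\alpha)$ is Hurwitz by the SRRW analysis of \citet{doshi2023self}, and Hurwitzness of $\rmJ_{11}+\tfrac{\mathds{1}_{\{b=1\}}}{2}\rmI$ is Assumption \ref{assump:2}, so the full $\rmJ(\alpha)+\tfrac{\mathds{1}_{\{b=1\}}}{2}\rmI$ is Hurwitz by the block-triangular structure. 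Next, I would solve the Poisson equation $m_{\rvz}(i)-(\rmK'_{\rvz}m_{\rvz})(i)=G(\rvz,i)-g(\rvz)$ via the fundamental matrix $(\rmI-\rmK'[\rvz]+\vone\vpi'[\rvz]^{T})^{-1}$, which is continuous in $\rvz$ (so conditions C4 and C6 hold on compact sets) and, evaluated at $\rvz^{*}$, has asymptotic covariance $\rmU=\sum_{i=1}^{N-1}\tfrac{1+\lambda_i}{1-\lambda_i}\rmG(\rvz^{*})^{T}\rvu_i\rvu_i^{T}\rmG(\rvz^{*})$ by the standard spectral representation of the MCMC variance; this is exactly the block form in \eqref{eqn:form_of_U}. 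Theorem \ref{theorem:CLT_classical_SA} then delivers the Lyapunov equation in part (b).

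For cases $k\in\{1,3\}$, the plan is the decomposition of \eqref{eqn:TTSA_decomposed}. I would introduce functions $\tilde H_{\vtheta,\rvx}$ and $q_{\rvx}$ solving the two Poisson equations for $H-\rvh$ and $\vdelta_\cdot-\vpi[\rvx]$, which splits each noise term into a martingale difference $M_{n+1}^{(\vtheta)},M_{n+1}^{(\rvx)}$, a Lipschitz-type small error $r_n^{(\cdot,1)}=O(\gamma_n)=o(\sqrt{\beta_n})$, and a telescoping term $r_n^{(\cdot,2)}$ whose partial sums are $O(1)$. After Taylor-expanding $\rvh$ and $\vpi[\rvx]-\rvx$ at $(\vtheta^{*},\vmu)$ to reveal $\rmJ_{11},\rmJ_{12}(\alpha),\rmJ_{22}(\alpha)$, I would decompose $\vtheta_n-\vtheta^{*}=L_n^{(\vtheta)}+R_n^{(\vtheta)}+\Delta_n^{(\vtheta)}$ and $\rvx_n-\vmu=L_n^{(\rvx)}+\Delta_n^{(\rvx)}$, where $L_n^{(\cdot)}$ carries the principal martingale contribution, $R_n^{(\vtheta)}$ (in case (i)) absorbs the coupling term $\beta_n\gamma_n^{-1}\rmJ_{12}(\alpha)\rmJ_{22}(\alpha)^{-1}(\rvx_{n+1}-\rvx_n)$ obtained by eliminating $\rvx_n-\vmu$ from the $\vtheta$ update, and $\Delta_n^{(\cdot)}$ is a remainder. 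For case (i) this substitution is what drives $M_{n+1}^{(\vtheta)}-\rmJ_{12}(\alpha)\rmJ_{22}(\alpha)^{-1}M_{n+1}^{(\rvx)}$ into the effective martingale, producing $\rmU_{\vtheta}(\alpha)$ in \eqref{eqn:matrix_U_vtheta}; for case (iii) the substitution is unnecessary because $\rvx_n$ is slow and $R_n^{(\vtheta)}$ becomes the frozen-$\rvx$ correction $\sum e^{(u_n-u_k)\rmJ_{11}}\beta_k\rmJ_{12}(\alpha)(L_{k-1}^{(\rvx)}+R_{k-1}^{(\rvx)})$. A martingale CLT applied to the joint $(\beta_n^{-1/2}L_n^{(\vtheta)},\gamma_n^{-1/2}L_n^{(\rvx)})$, combined with Lemma \ref{lemma:deterministic_convergence}, yields the limit covariance; the cross block $A_{2,n}$ vanishes due to $\beta_n=o(\gamma_n)$ (case (i)) or $\gamma_n=o(\beta_n)$ (case (iii)), explaining the block diagonal structure in \eqref{eqn:v_alpha}. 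Finally, using Lemma \ref{lemma:strong_convergence_L_n} to get sharp rates $\|L_n^{(\rvx)}\|=O(\sqrt{\gamma_n\log s_n})$ and using Lemmas \ref{lemma:upperbound_of_sequence_x}, \ref{lemma:matrix_norm_inequality}, I would show $R_n^{(\vtheta)}=O(\beta_n\gamma_n^{-1}\sqrt{\gamma_n\log s_n}+e^{-u_nT})$ and $\Delta_n^{(\vtheta)}=o(\sqrt{\beta_n})$, so they are negligible at the $\sqrt{\beta_n}$ scale.

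The hard part will be controlling the \emph{extra} non-martingale pieces produced by the Poisson-equation decomposition that are absent from the martingale-difference two-timescale CLTs of \citet{konda2004convergence,mokkadem2006convergence}. Specifically, I expect two obstacles. First, the conditional covariances $\E[M_{n+1}^{(\cdot)}(M_{n+1}^{(\cdot)})^{T}\mid\gF_n]$ do not equal the target matrices $\rmU_{11},\rmU_{21},\rmU_{22}$ exactly; they include drift pieces $\rmD_n^{(i)}$ (vanishing a.s.\ by continuity and the a.s.\ convergence of $\rvx_n\to\vmu$) and telescoping/martingale pieces $\rmJ_n^{(i)}$, and I must show each contributes $o(1)$ to $A_{1,n},A_{4,n}$ via Abel summation plus Burkholder (Lemma \ref{lemma:Burkholder Inequality}) with $a>1/2$. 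Second, the remainder terms $r_n^{(\vtheta,2)},r_n^{(\rvx,2)}$ give an additional component $\Delta_n^{(\vtheta,2)}=\sum\Phi_{k,n}\beta_{k+1}(r_k^{(\vtheta,2)}-\rmJ_{12}(\alpha)\rmJ_{22}(\alpha)^{-1}r_k^{(\rvx,2)})$ whose handling requires another Abel transform plus the bound $\sqrt{\gamma_n}\|\sum r_k^{(\cdot,2)}\|=O(\sqrt{\gamma_n})$; the iterative bootstrapping in case (iii) (refining $\|\hat\Delta_n^{(\vtheta,1)}\|=O(\gamma_n^2\beta_n^{-2}\omega_n^2+[\gamma_n\beta_n^{-1}]^{k})+o(\sqrt{\gamma_n})$ to eventually $o(\sqrt{\beta_n})$) is the subtlest step and is where the stronger Assumption \ref{assump:5} is needed to anchor $\rho(\rvx)$.
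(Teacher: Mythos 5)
Your proposal is aimed at a different statement from the one under review. The result to be proved here is Theorem \ref{theorem:CLT_classical_SA}, the generic CLT for the single-timescale stochastic approximation \eqref{eqn:update_SA} under conditions C1--C6; the paper itself supplies no proof of it, importing it directly from \citet{fort2015central} (Theorem 2.1 and Proposition 4.1). What you have written is an outline of the proof of Theorem \ref{theorem:CLT_SA}, the paper's main CLT for the SA-SRRW iterates, and your treatment of case (ii) explicitly invokes Theorem \ref{theorem:CLT_classical_SA} as a black box. As an argument for the stated theorem this is circular: you cannot establish \eqref{eqn:lyapunov_equation} by appealing to the very result that asserts it.

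To actually prove the statement you would need to run the single-timescale version of your machinery directly on \eqref{eqn:update_SA}: use the Poisson solution $m_{\vz}$ from C4 to split $G(\vz_n,X_{n+1})-g(\vz_n)$ into a martingale difference plus telescoping and locally Lipschitz remainders controlled by \eqref{eqn:solution_poisson_bounded}--\eqref{eqn:solution_poisson_local_lipschitz}; use the second Poisson equation in C6 to show that the conditional quadratic variation of that martingale converges to $\rmU \triangleq \sum_{i\in\gN}\mu_i\left(m_{\rvz^*}(i)m_{\rvz^*}(i)^T-(\rmK_{\rvz^*}m_{\rvz^*})(i)(\rmK_{\rvz^*}m_{\rvz^*})(i)^T\right)$; linearize $g$ at $\vz^*$ with C5 guaranteeing that $\nabla g(\vz^*)+\tfrac{1}{2}\rmI$ (in the $\gamma_n=\gamma_0/n$ case) is Hurwitz; and conclude via the martingale CLT together with a convergence result of the type in Lemma \ref{lemma:deterministic_convergence}, which is what produces the Lyapunov equation. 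All of these ingredients appear in your write-up, but none of them are assembled for the iteration \eqref{eqn:update_SA} with a general controlled kernel $\rmK_{\vz}$; they are assembled for \eqref{eqn:SRRW_SA_iterations}. (For what it is worth, if your intended target really was Theorem \ref{theorem:CLT_SA}, your outline does track the paper's Appendix E argument closely: the augmentation for case (ii), the Poisson-equation decomposition, the $L_n/R_n/\Delta_n$ splitting, the vanishing cross block $A_{2,n}$, and the bootstrapping refinement in case (iii).)
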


\subsection{Asymptotic Results of Two-Timescale SA}\label{appendix:Asymptotic Results of Two-Timescale SA}
For the two-timescale SA with iterate-dependent Markov chain, we have the following iterations:
\begin{subequations}\label{eqn:general_TTSA_form}
\begin{equation}
    \rvz_{n+1} = \rvz_n + \beta_{n+1} G_1(\rvz_n, \rvy_n X_{n+1}),
\end{equation}
\begin{equation}
    \rvy_{n+1} = \rvy_n + \gamma_{n+1} G_2(\rvz_n, \rvy_n, X_{n+1}),
\end{equation}
\end{subequations}
with the goal of finding the root $(\rvz^*,\rvy^*)$ such that
\begin{equation}\label{eqn:def_g1_g2}
    g_1(\rvz^*,\rvy^*) = \E_{X \sim \vmu}[G_1(\rvz^*,\rvy^*,X)] = 0, \quad g_2(\rvz^*,\rvy^*) = \E_{X \sim \vmu}[G_2(\rvz^*,\rvy^*,X)] = 0.
\end{equation}

We present here a simplified version of the assumptions for single-valued functions $G_1, G_2$ that are necessary for the almost sure convergence result in \citet[Theorem 4]{yaji2020stochastic}. The original assumptions are intended for more general set-valued functions $G_1, G_2$.
\begin{enumerate}[label=(B\arabic*), ref=B\arabic*]
  \item The step sizes $\beta_n \triangleq n^{-b}$ and $\gamma_n \triangleq n^{-a}$, where $0.5 < a < b \leq 1$. \label{assump:one}
  \item Assume the function $G_1(\rvz,\rvy,X)$ is continuous and differentiable with respect to $\rvz, \rvy$. There exists a positive constant $L_1$ such that $\|G_1(\rvz,\rvy,X)\| \leq L_1(1 + \|\rvz\| + \|\rvy\|)$ for every $\rvz \in \sR^{d_1}, \rvy \in \sR^{d_2}, X \in \gN$. The same condition holds for the function $G_2$ as well. \label{assump:two}
  \item Assume there exists a function $\rho : \sR^{d_1} \to \sR^{d_2}$ such that the following three properties hold: (i) $\|\rho(\rvz)\| \leq L_2(1+\|\rvz\|)$ for some positive constant $L_2$; (ii) the ODE $\dot \rvy = g_2(\rvz, \rvy)$ has a globally asymptotically stable equilibrium $\lambda(\rvz)$ such that $g_2(\rvz,\rho(\rvz)) = 0$. Additionally, let $\hat{g}_1(\rvz) \triangleq g_1(\rvz,\rho(\rvz))$, there exists a set of disjoint roots $\Lambda \triangleq \{\rvz^*: \hat{g}_1(\rvz^*)  = 0\}$, which is the set of globally asymptotically stable equilibria of the ODE $\dot\rvz = \hat g_1(\rvz)$.
  \item $\{X_n\}_{n\geq 0}$ is an iterate-dependent Markov process in finite state space $\gN$. For every $n\geq 0$, $P(X_{n+1} = j | \rvz_m,\rvy_m, X_m, 0 \leq m \leq n) = P(X_{n+1} = j | \rvz_n,\rvy_n, X_n=i) = \rmP_{i,j}[\rvz_n,\rvy_n]$, where the transition kernel $\rmP[\rvz,\rvy]$ is continuous in $\rvz, \rvy$, and the Markov chain generated by $\rmP[\rvz,\rvy]$ is ergodic so that it admits a stationary distribution $\vpi(\rvz,\rvy)$, and $\vpi(\rvz^*,\rho(\rvz^*)) = \vmu$. \label{assump:four}
  \item $\sup_{n\geq 0} (\|\rvz_n\| + \|\rvy_n\|) < \infty$ a.s. \label{assump:five}
\end{enumerate}
\citet{yaji2020stochastic} included assumptions A1 - A9 and A11 for the following Theorem \ref{theorem:a.s.convergence_TTSA}. We briefly show the correspondence of our assumptions (B1) - (B5) and theirs: (B1) with A5, (B2) with A1 and A2, (B3) with A9 and A11, (B4) with A3 and A4, and (B5) with A8. Given that our two-timescale SA framework \eqref{eqn:general_TTSA_form} excludes additional noises (setting them to zero), A6 and A7 therein are inherently met.
\begin{theorem}[\citet{yaji2020stochastic} Theorem 4]\label{theorem:a.s.convergence_TTSA}
    Under Assumptions (B1) - (B5), iterations $(\rvz_n,\rvy_n)$ in \eqref{eqn:general_TTSA_form} almost surely converge to a set of roots, i.e., $(\rvz_n, \rvy_n) \to \bigcup_{\rvz^* \in \Lambda} (\rvz^*, \rho(\rvz^*))$ a.s.
\end{theorem}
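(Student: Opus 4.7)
The plan is to execute the classical two-timescale ODE argument (à la Borkar) adapted to the iterate-dependent Markov-noise setting. The overall strategy has two phases: first show that the fast iterate $\rvy_n$ tracks the equilibrium manifold $\rho(\rvz_n)$ of the frozen-$\rvz$ fast ODE, and second, exploiting this tracking, show that the slow iterate $\rvz_n$ follows the averaged ODE $\dot{\rvz} = \hat{g}_1(\rvz)$ and therefore converges to $\Lambda$. Continuity of $\rho$ then yields $(\rvz_n,\rvy_n)\to\bigcup_{\rvz^*\in\Lambda}(\rvz^*,\rho(\rvz^*))$.

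The first step is to neutralize the Markov noise via the Poisson equation. For each fixed pair $(\rvz,\rvy)$ the kernel $\rmP[\rvz,\rvy]$ is ergodic by (B4), so there exist solutions $m^{(i)}_{\rvz,\rvy}$ with
\[
G_i(\rvz,\rvy,X_{n+1}) - g_i(\rvz,\rvy) = M^{(i)}_{n+1} + \varepsilon^{(i)}_n,
\]
where $M^{(i)}_{n+1}$ is a martingale-difference with respect to $\gF_n=\sigma(\rvz_k,\rvy_k,X_k:k\le n)$ and $\varepsilon^{(i)}_n$ is a telescoping remainder of the type $(\rmP_{\rvz_n,\rvy_n}m^{(i)}_{\rvz_n,\rvy_n})(X_n)-(\rmP_{\rvz_{n+1},\rvy_{n+1}}m^{(i)}_{\rvz_{n+1},\rvy_{n+1}})(X_{n+1})$ plus a small Lipschitz defect (exactly as in the decompositions \eqref{eqn:43} of Appendix~\ref{appendix:proof_CLT_case2}). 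Under (B2), (B4) and (B5), the iterates almost surely remain in a compact set, so the Poisson solutions and their kernel-images are uniformly bounded and jointly continuous in $(\rvz,\rvy)$; this is what allows the $\varepsilon^{(i)}_n$ terms to become negligible after step-size weighting.

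Next I would carry out the fast-timescale analysis. Since $\beta_n=o(\gamma_n)$ and (B2)+(B5) give $\|G_1\|$ bounded on the compact closure, we have $\|\rvz_{n+1}-\rvz_n\|=O(\beta_n)=o(\gamma_n)$, so along the fast clock $t_n=\sum_{k\le n}\gamma_k$ the slow component is essentially frozen. Form the piecewise-linear interpolant $\bar{\rvy}(t)$ of $\{\rvy_n\}$ on this clock. Using Lemma~\ref{lemma:Burkholder Inequality} on $\sum \gamma_{n+1}M^{(2)}_{n+1}$ and Abel-summation on the $\gamma_{n+1}\varepsilon^{(2)}_n$ terms (mimicking the treatment of $M^{(\rvx)}_n$ and $r^{(\rvx,2)}_n$ in Lemmas \ref{lemma:M_n_property}--\ref{lemma:r_n_property}), I would show that on every compact time window $[T,T+S]$ the shifted trajectory $\bar{\rvy}(T+\cdot)$ is asymptotically a uniform approximation of a solution to $\dot{\rvy}=g_2(\bar{\rvz},\rvy)$ with $\bar{\rvz}$ held at its value at time $T$. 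Global asymptotic stability of $\rho(\rvz)$ under this frozen ODE (assumption (B3)(ii)), together with an equicontinuity/compactness argument over $\omega$, then yields $\rvy_n-\rho(\rvz_n)\to 0$ a.s.

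For the slow-timescale analysis I would substitute $\rvy_n=\rho(\rvz_n)+\xi_n$ with $\xi_n\to 0$ a.s.\ into the $\rvz$-iteration. Continuity of $g_1$ and the linear-growth bound from (B2) give
\[
\rvz_{n+1} = \rvz_n + \beta_{n+1}\bigl(\hat{g}_1(\rvz_n) + o(1) + M^{(1)}_{n+1} + \varepsilon^{(1)}_n\bigr).
\]
This is a single-timescale SA with mean field $\hat{g}_1$ and noise whose tail contribution vanishes almost surely. Applying a Kushner--Clark / Benaïm-style convergence result (for instance Theorem~\ref{theorem:a.s.convergence_SA} applied to the slow recursion, with the compactness from (B5) and the global asymptotic stability of $\Lambda$ from (B3)), one concludes $\rvz_n\to\Lambda$ a.s. Combining with $\rvy_n-\rho(\rvz_n)\to 0$ and continuity of $\rho$ (guaranteed by the implicit-function-style linear-growth bound in (B3)(i)) gives the claimed limit set.

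The main obstacle is the fast-tracking step. Because (B2) only provides linear growth and (B3)--(B4) only local regularity, one cannot invoke off-the-shelf finite-time bounds as in globally-Lipschitz two-timescale SA; instead, one must rely on the almost-sure boundedness from (B5), then localize all Lipschitz and Poisson-solution estimates to the (random) compact containing the trajectory, and finally use an ODE-method limit-point argument over subsequences to propagate the frozen-$\rvz$ stability into tracking. Handling the iterate dependence of $\rmP[\rvz,\rvy]$ inside the Poisson remainder $\varepsilon^{(2)}_n$ (controlling the non-martingale drift without a global Lipschitz constant) is the delicate bookkeeping that makes this proof nontrivial and is precisely where the argument in \citet{yaji2020stochastic} devotes most of its effort.
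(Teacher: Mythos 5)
You should first note that the paper does not prove this statement at all: Theorem~\ref{theorem:a.s.convergence_TTSA} is imported verbatim as Theorem~4 of \citet{yaji2020stochastic}, and the only ``proof'' content in the paper is the sentence mapping assumptions (B1)--(B5) onto assumptions A1--A9 and A11 of that reference. So the relevant comparison is between your sketch and the cited external proof. Your outline is the classical Borkar-style two-timescale ODE method with the Markov noise neutralized through Poisson equations and martingale decompositions; this parallels what the paper itself does later for its \emph{CLT} in Appendix~\ref{appendix:proof_CLT_case2}, but it is not the route taken in \citet{yaji2020stochastic}. That work proves almost sure convergence in a more general set-valued (differential-inclusion) framework, handling the iterate-dependent Markov noise through occupation measures and limiting differential inclusions in the spirit of Bena\"{i}m--Hofbauer--Sorin, rather than through Poisson-equation telescoping. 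Your approach buys explicit control of the noise terms (useful if one later wants rates or a CLT, which is exactly why the paper re-does this bookkeeping in its own CLT proof); the occupation-measure approach buys generality (set-valued dynamics, weaker regularity of the kernel) at the cost of yielding only qualitative convergence.

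Two points in your sketch would need repair before it constitutes a proof. First, in the slow-timescale step you propose to invoke Theorem~\ref{theorem:a.s.convergence_SA}, but its condition C3 demands a $C^1$ Lyapunov function $w$ with $\nabla w^T \hat g_1 \le 0$ and $w(S)$ of empty interior; assumption (B3) only gives you a set of disjoint globally asymptotically stable equilibria of $\dot{\rvz}=\hat g_1(\rvz)$, so you must either construct $w$ via a converse Lyapunov theorem or replace that citation with a limit-set/chain-recurrence theorem that accepts attractors directly. Second, the tracking claim $\rvy_n-\rho(\rvz_n)\to 0$ is asserted but is precisely the step where the iterate dependence of $\rmP[\rvz,\rvy]$ bites: the Poisson solution $m^{(2)}_{\rvz,\rvy}$ and its kernel image must be shown uniformly bounded and (locally) Lipschitz in $(\rvz,\rvy)$ on the random compact set from (B5) \emph{uniformly over the fast time windows}, and the non-martingale drift $\varepsilon^{(2)}_n$ must be summable after $\gamma$-weighting; you correctly identify this as the delicate part but do not supply the estimate. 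Neither issue is fatal to the strategy, but as written the proposal is a roadmap rather than a proof, whereas the paper sidesteps the question entirely by citation.
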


\section{Additional Simulation Results}\label{appendix:simulation_results}
\subsection{Binary Classification on Additional Datasets}
In this part, we perform the binary classification task as in Section \ref{section:simulation} on additional datasets, i.e., \textit{a9a} (with $123$ features) and \textit{splice} (with $60$ features) from LIBSVM \citep{chang2011libsvm}. Figure \ref{fig:8} provides the performance ordering of different $\alpha$ values, and we empirically demonstrate that the curves with $\alpha \geq 5$ still outperform the \textit{i.i.d.} counterpart. Additionally, Figure \ref{fig:7} compare cases (i) - (iii) under both \textit{a9a} and \textit{splice} datasets, and case (i) consistently perform the best. 
\begin{figure}[t]
    \centering
    \begin{subfigure}[b]{0.48\textwidth}
        \centering
        \includegraphics[width=\textwidth]{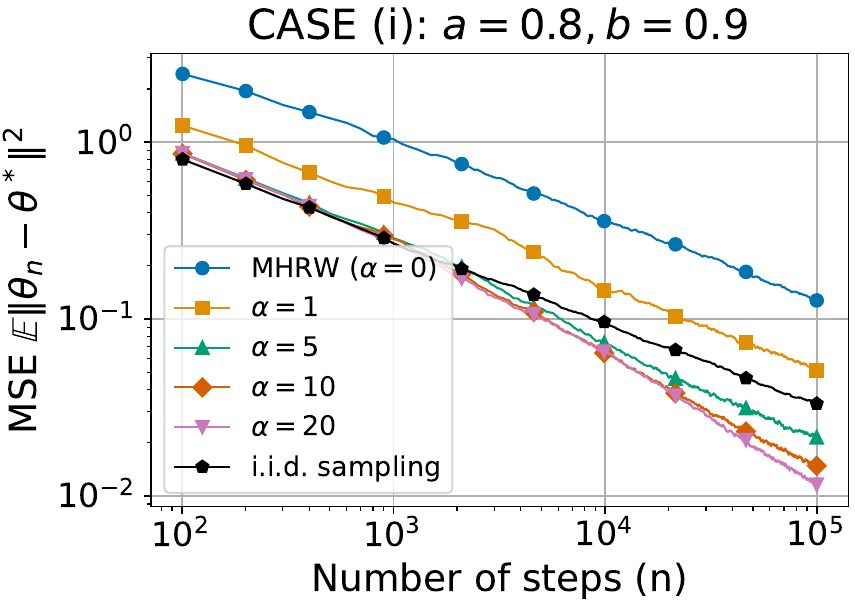}
        \caption{SGD-SRRW, \textit{a9a}}
        \label{fig:8a}
    \end{subfigure}
    \hfill
    \begin{subfigure}[b]{0.48\textwidth}
        \centering
        \includegraphics[width=\textwidth]{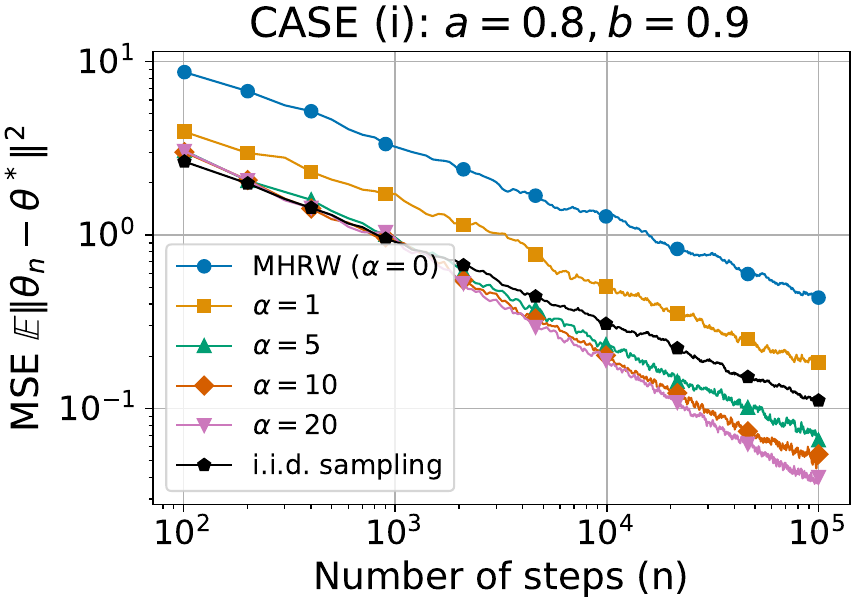}
        \caption{SGD-SRRW, \textit{splice}}
        \label{fig:8b}
    \end{subfigure}
    \caption{Simulation results with various $\alpha$ values in \textit{a9a} and \textit{splice} datasets.}
    \label{fig:8}
\end{figure}

\begin{figure}[t]
    \centering
    \begin{subfigure}[b]{0.32\textwidth}
        \centering
        \includegraphics[width=\textwidth]{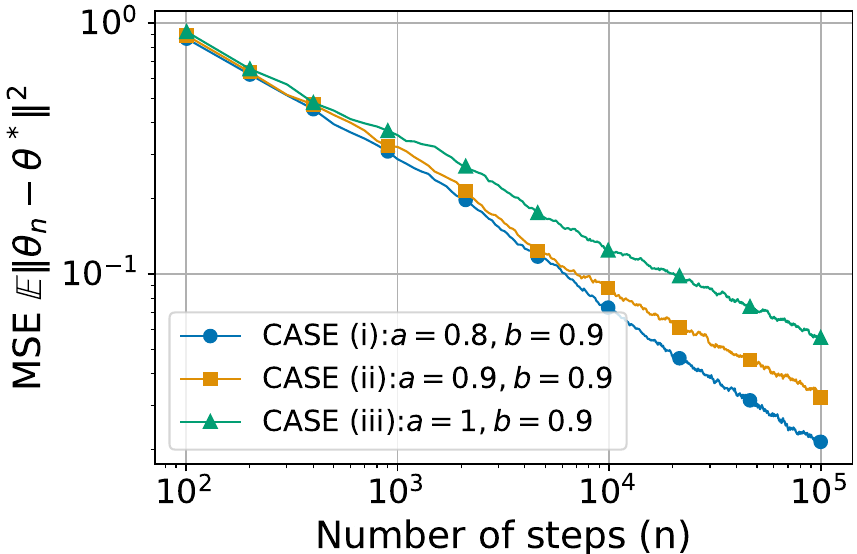}
        \caption{$\alpha = 5$, SGD-SRRW, \textit{a9a}}
        \label{fig:4a}
    \end{subfigure}
    \hfill
    \begin{subfigure}[b]{0.32\textwidth}
        \centering
        \includegraphics[width=\textwidth]{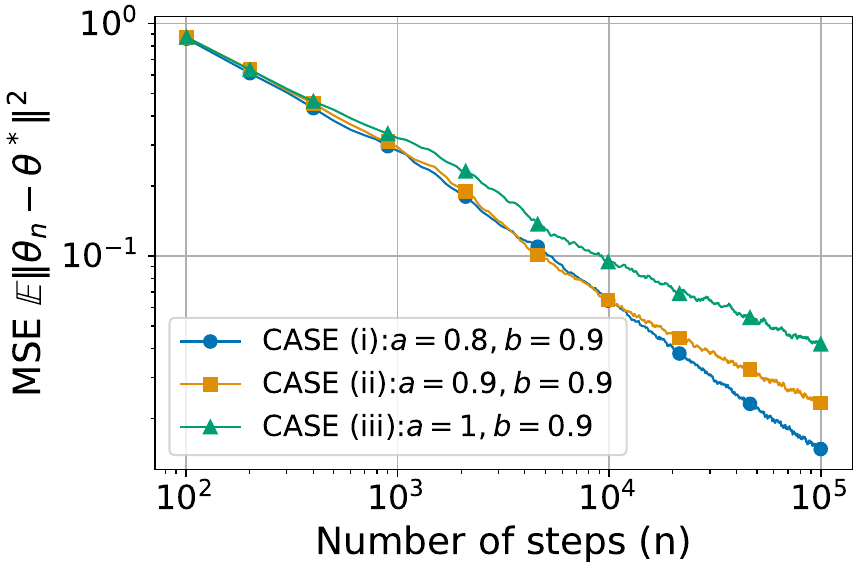}
        \caption{$\alpha = 10$, SGD-SRRW, \textit{a9a}}
        \label{fig:4b}
    \end{subfigure}
    \hfill
    \begin{subfigure}[b]{0.32\textwidth}
        \centering
        \includegraphics[width=\textwidth]{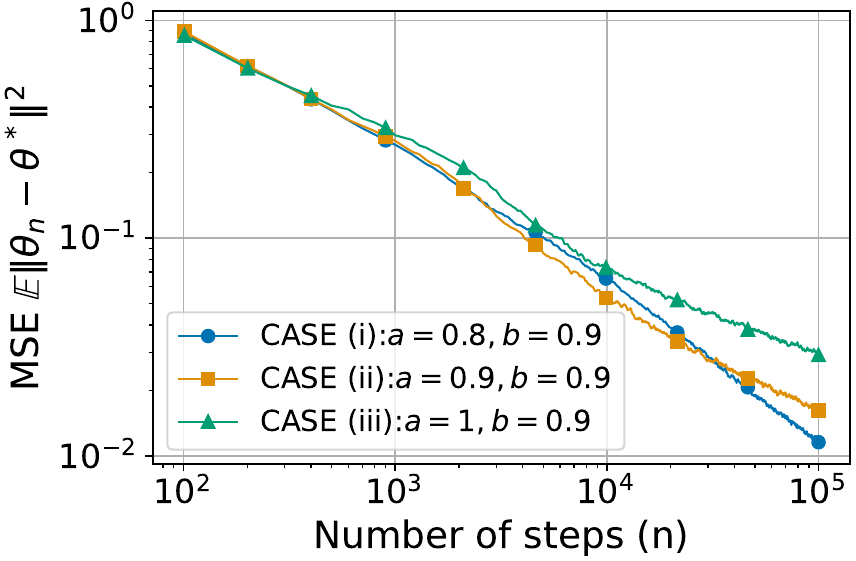}
        \caption{$\alpha = 20$, SGD-SRRW, \textit{a9a}}
        \label{fig:4c}
    \end{subfigure}
    \begin{subfigure}[b]{0.32\textwidth}
        \centering
        \includegraphics[width=\textwidth]{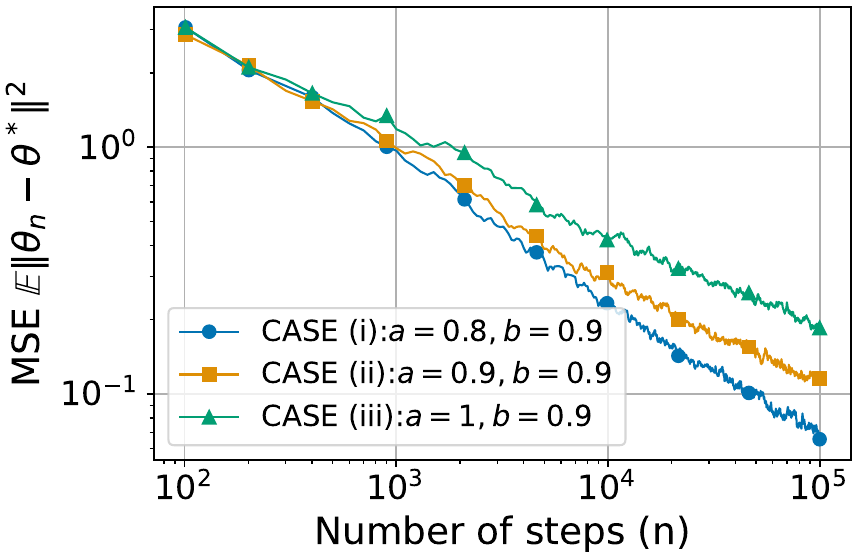}
        \caption{$\alpha = 5$, SGD-SRRW, \textit{splice}}
        \label{fig:7a}
    \end{subfigure}
    \hfill
    \begin{subfigure}[b]{0.32\textwidth}
        \centering
        \includegraphics[width=\textwidth]{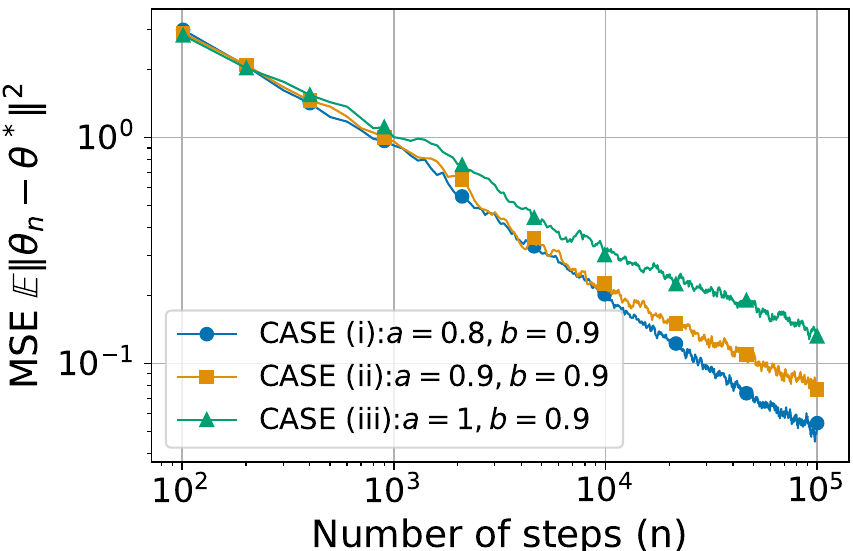}
        \caption{$\alpha = 10$, SGD-SRRW, \textit{splice}}
        \label{fig:7b}
    \end{subfigure}
    \hfill
    \begin{subfigure}[b]{0.32\textwidth}
        \centering
        \includegraphics[width=\textwidth]{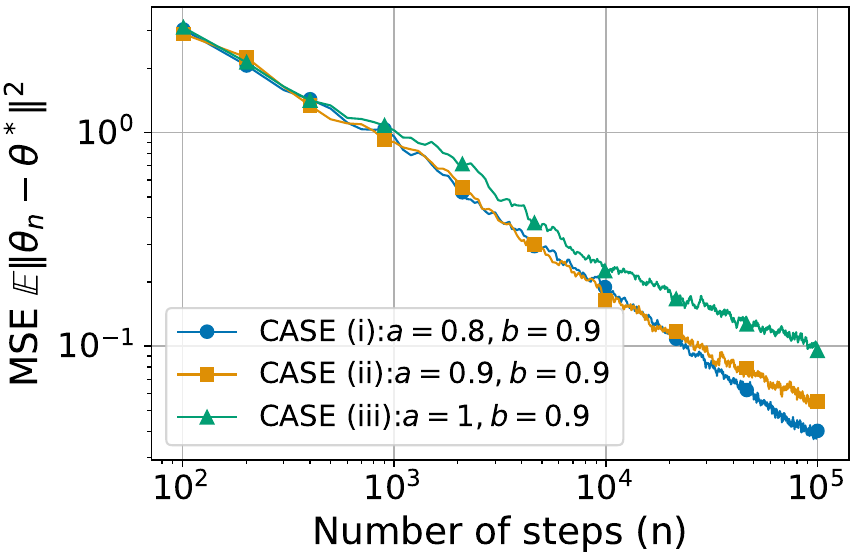}
        \caption{$\alpha = 20$, SGD-SRRW, \textit{splice}}
        \label{fig:7c}
    \end{subfigure}
    \caption{Performance comparison among cases (i) - (iii) for $\alpha \in \{5,10,20\}$ in \textit{a9a} and \textit{splice} datasets.}
    \label{fig:7}
\end{figure}

\subsection{Non-convex Linear Regression}
We further test SGD-SRRW and SHB-SRRW algorithms with a non-convex function to demonstrate the efficiency of our SA-SRRW algorithm beyond the convex setting. In this task, we simulate the following linear regression problem in \citet{khaled2022better} with non-convex regularization
\begin{equation}
    \min_{\vtheta \in \sR^d} \left\{f(\vtheta) \triangleq \frac{1}{N} \sum_{i=1}^N l_i(\vtheta) + \kappa \sum_{j=1}^d \frac{\vtheta_j^2}{\vtheta_j^2 + 1}\right\}
\end{equation}
where the loss function $l_i(\vtheta) = \|\rvs_i^T\vtheta - y_i\|^2$ and $\kappa = 1$, with the data points $\{(\rvs_i,y_i)\}_{i\in\gN}$ from the \textit{ijcnn1} dataset of LIBVIM \citep{chang2011libsvm}. We still perform the optimization over the wikiVote graph, as done in Section \ref{section:simulation}.

The numerical results for the non-convex linear regression taks are presented in Figures \ref{fig:5} and \ref{fig:6}, where each experiment is repeated $100$ times. Figures \ref{fig:5a} and \ref{fig:5b} show that the performance ordering across different $\alpha$ values is still preserved for both algorithms over almost all time, and curves for $\alpha \geq 5$ outperform that of the \textit{i.i.d.} sampling (in black) under the graph topological constraints. Additionally, among the three cases examined at identical $\alpha$ values, Figures \ref{fig:6a} - \ref{fig:6c} confirm that case (i) performs consistently better than the other two cases, implying that case (i) can even become the best choice for non-convex distributed optimization tasks.

\begin{figure}[t]
    \centering
    \begin{subfigure}[b]{0.48\textwidth}
        \centering
        \includegraphics[width=\textwidth]{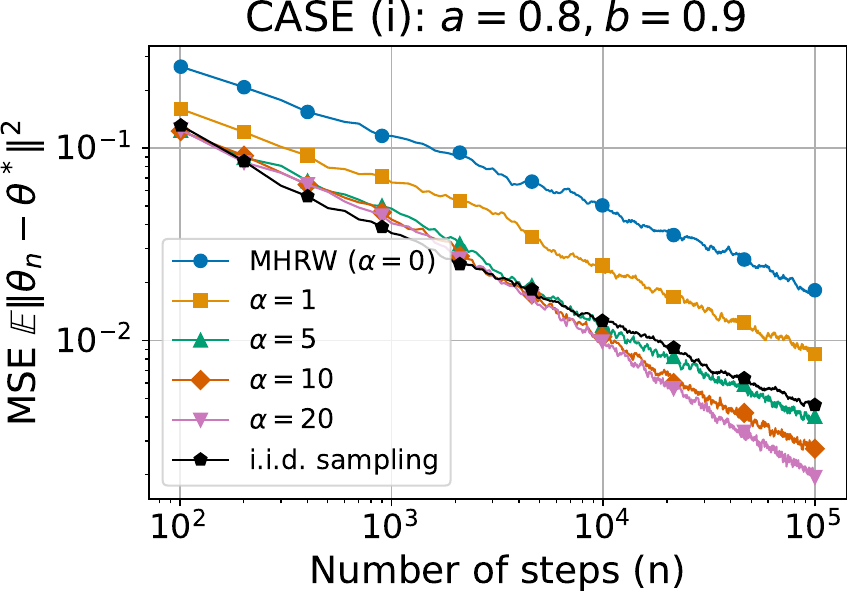}
        \caption{SGD-SRRW}
        \label{fig:5a}
    \end{subfigure}
    \hfill
    \begin{subfigure}[b]{0.48\textwidth}
        \centering
        \includegraphics[width=\textwidth]{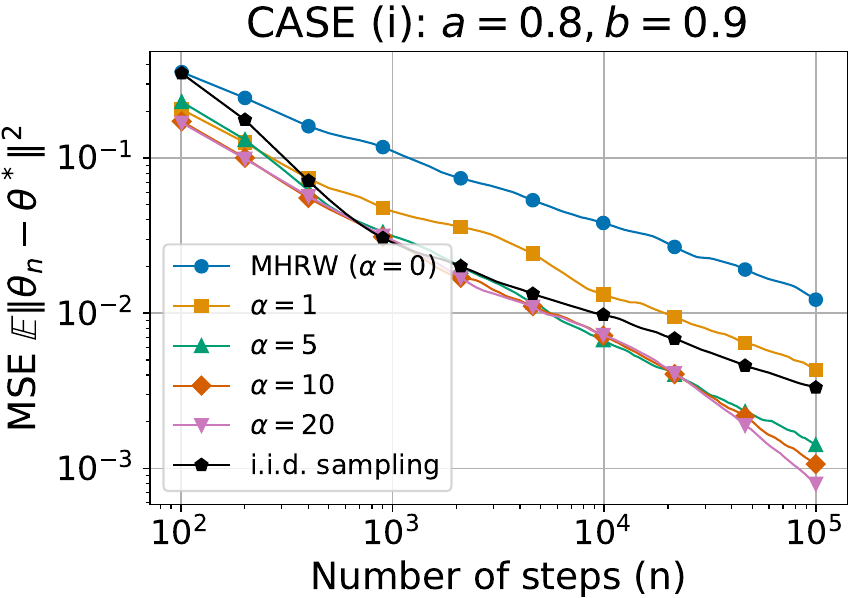}
        \caption{SHB-SRRW}
        \label{fig:5b}
    \end{subfigure}
    \caption{Simulation results for non-convex linear regression under case (i) with various $\alpha$ values.}
    \label{fig:5}
\end{figure}
\begin{figure}[t]
    \centering
    \begin{subfigure}[b]{0.32\textwidth}
        \centering
        \includegraphics[width=\textwidth]{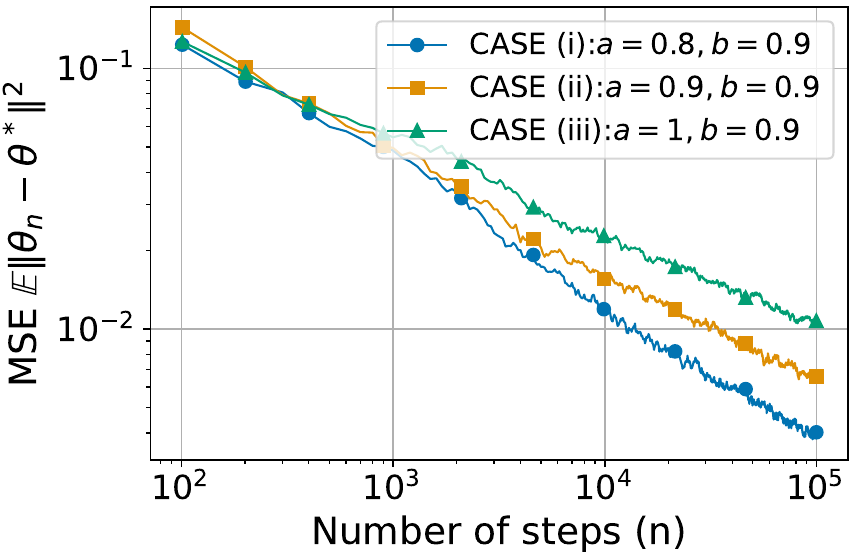}
        \caption{$\alpha = 5$, SGD-SRRW}
        \label{fig:6a}
    \end{subfigure}
    \hfill
    \begin{subfigure}[b]{0.32\textwidth}
        \centering
        \includegraphics[width=\textwidth]{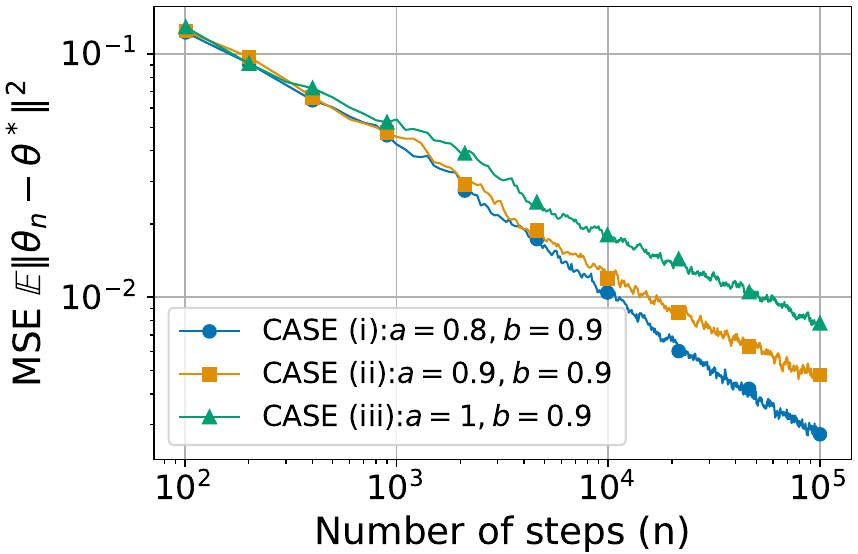}
        \caption{$\alpha = 10$, SGD-SRRW}
        \label{fig:6b}
    \end{subfigure}
    \hfill
    \begin{subfigure}[b]{0.32\textwidth}
        \centering
        \includegraphics[width=\textwidth]{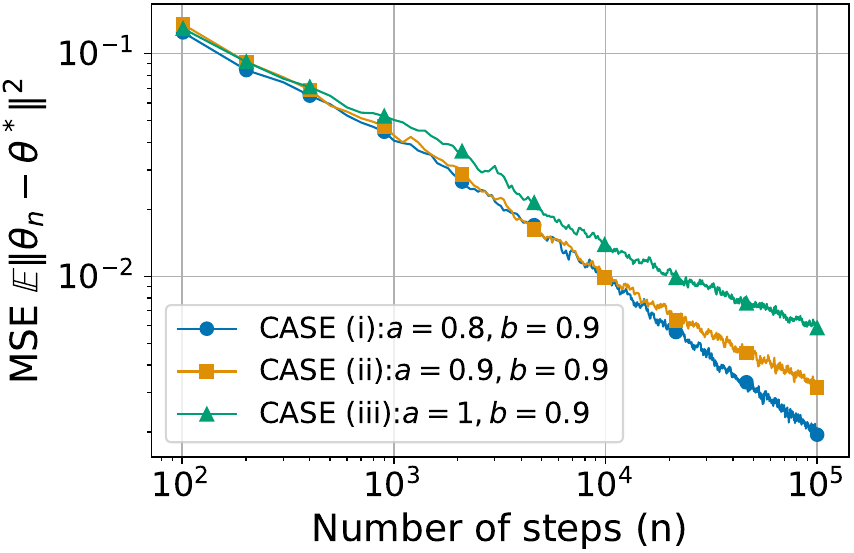}
        \caption{$\alpha = 20$, SGD-SRRW}
        \label{fig:6c}
    \end{subfigure}
    \caption{Performance comparison among cases (i) - (iii) for non-convex regression.}
    \label{fig:6}
\end{figure}

\end{document}